\newtheorem{lemma}{Lemma}
\newtheorem{theorem}{Theorem}
\newtheorem{corollary}{Corollary}
\newtheorem{claim}{Claim}
\newcommand{\dss}{\displaystyle\sum}
\newcommand{\lp}{\left(}
\newcommand{\rp}{\right)}
\newcommand{\cx}{{\bf x}}
\newcommand{\cz}{{\bf z}}
\newcommand{\vol}{{\rm Vol}}
\DeclarePairedDelimiter{\abs}{\lvert}{\rvert}%
\newcommand{\LL}[1]{\textcolor{blue}{{}Comment by LL: #1}}
\newcommand{\WL}[1]{\textcolor{magenta}{{}Comment by WL: #1}}
\newcommand{\one}{\mathbf{1}}
\title{Maximum spread of $K_{s,t}$-minor-free graphs} 
\author{
William Linz, \thanks{University of South Carolina, Columbia, SC. ({\tt wlinz@mailbox.sc.edu}). The author is partially supported by NSF DMS 2038080 grant.}
\and Linyuan Lu, \thanks{University of South Carolina, Columbia, SC. ({\tt lu@math.sc.edu}). The author is partially supported by NSF DMS 2038080 grant.}
\and Zhiyu Wang \thanks{Louisiana State University, Baton Rouge, LA.
({\tt zhiyuw@lsu.edu}). This author was supported in part by LA Board of Regents grant LEQSF(2024-27)-RD-A-16.}
}
\begin{document}

\maketitle
\abstract{
The spread of a graph $G$ is the difference between the largest and smallest eigenvalue of the adjacency matrix of $G$. 
In this paper, we consider the family of graphs which contain no $K_{s,t}$-minor.
We show that for any $t\geq s \geq  2$ and sufficiently large $n$, there is an integer $\xi_{t}$ such that the extremal $n$-vertex $K_{s,t}$-minor-free graph attaining the maximum spread is the graph obtained by joining a graph $L$ on $(s-1)$ vertices to the disjoint union of $\lfloor \frac{2n+\xi_{t}}{3t}\rfloor$ copies of $K_t$ and
$n-s+1 - t\lfloor \frac{2n+\xi_t}{3t}\rfloor$
isolated vertices. Furthermore, we give an explicit formula for $\xi_{t}$ and an explicit description for the graph $L$ for $t \geq \frac32(s-3) +\frac{4}{s-1}$. 
}

\section{Introduction}
Given a square matrix $M$, the \textit{spread} of $M$, denoted by $S(M)$, is defined as $S(M):= \max_{i,j} |\lambda_i -\lambda_j|$, where the maximum is taken over all pairs of eigenvalues of $M$, so that $S(M)$ is the diameter of the spectrum of $M$.
Given a graph $G=(V,E)$ on $n$ vertices, the \textit{spread} of $G$, denoted by $S(G)$, is defined as the spread of the adjacency matrix $A(G)$ of $G$. The adjacency matrix $A(G)$ is the $n\times{n}$ matrix with rows and columns indexed by the vertices of $G$ such that for every pair of vertices $u, v\in V(G)$, $(A(G))_{uv} = 1$ if $uv\in E(G)$ and $(A(G))_{uv} = 0$ otherwise. Since $A(G)$ is a real symmetric matrix, its eigenvalues are all real numbers. Let $\lambda_1(G) \geq \cdots \geq \lambda_n(G)$ be the eigenvalues of $A(G)$, where $\lambda_1$ is called the $\textit{spectral radius}$ of $G$. Then $S(G) = \lambda_1 -\lambda_n$. 

The systematic study of the spread of graphs was initiated by Gregory, Hershkowitz, and Kirkland \cite{GHK2001}. One of the central focuses of this area is to find the maximum or minimum spread over a fixed family of graphs and characterize the extremal graphs. The maximum-spread graph over the family of all $n$-vertex graphs was recently determined for sufficiently large $n$ by Breen, Riasanovsky, Tait and Urschel~\cite{BRTU2021+}, building on much prior work~\cite{Aouchiche2008, Riasanovsky2021,Stanic2015, Stevanovic2014, Urschel2021}. Other problems of such extremal flavor have been investigated for trees~\cite{AP2015}, graphs with few cycles~\cite{FWG2008, PBA2009, Wu-Shu2010}, the family of bipartite graphs~\cite{BRTU2021+}, graphs with a given matching number~\cite{LZZ2007}, girth~\cite{WZS2013}, or size~\cite{Liu-Liu2009}, outerplanar graphs~\cite{GBT2022, LLLW2022} and planar graphs~\cite{LLLW2022}. We note that the spreads of other matrices associated with a graph have also been extensively studied (see e.g. references in \cite{GBT2022, Cao-Vince93, Cvetkovic-Rowlinson90}).

Given two graphs $G$ and $H$, the \textit{join} of $G$ and $H$, denoted by $G\vee H$, is the graph obtained from the disjoint union of $G$ and $H$ by connecting every vertex of $G$ with every vertex of $H$. Let $P_k$ denote the path on $k$ vertices. Given two graphs $G$ and $H$, let $G\cup H$ denote the disjoint union of $G$ and $H$. Given a graph $G$ and a positive integer $k$, we use $kG$ to denote the disjoint union of $k$ copies of $G$. Given $v \subseteq V(G)$, let $N_G(v)$ denote the set of neighbors of $v$ in $G$, and let $d_G(v)$ denote the degree of $v$ in $G$, i.e., $d_G(v) = |N_G(v)|$. Given $S\subseteq V(G)$, define $N_G(S)$ as $N_G(S) =\cup_{v\in S} (N_G(v)\backslash S)$. 
We may ignore the subscript $G$ when there is no ambiguity. A graph $H$ is called a \textit{minor} of a graph $G$ if a graph isomorphic to
$H$ can be obtained from a subgraph of G by contracting edges. A graph $G$ is called \textit{$H$-minor-free} if $H$ is not a minor of $G$. 

There has been extensive work on finding the maximum spectral radius of $K_{s, t}$-minor-free graphs. 
Let ${\cal G}_{s,t}(n)$ denote the family of all $K_{s, t}$-minor-free graphs on $n$ vertices.
Nikiforov~\cite{Nikiforov2017} proved an upper bound for the maximum spectral radius of a $K_{2, t}$-minor-free graph. Nikiforov showed that this bound is tight for graphs with a sufficiently large number of vertices $n$ with $n\equiv 1\pmod{t}$ and determined the extremal graph in these cases. Tait~\cite{Tait2019} extended Nikiforov's result by proving an upper bound on the maximum spectral radius of $K_{s, t}$-minor-free graphs, and determined the extremal graphs when $n\equiv s-1\pmod{t}$ and $n$ is sufficiently large. Recently, Zhai and Lin~\cite{ZL2022} completely determined the $K_{s, t}$-minor-free graph with maximum spectral radius for a sufficiently large number of vertices $n$ and all $t\ge s\ge 2$.

In \cite{LLW2023}, the authors determined the maximum-spread $K_{2, t}$-minor-free graph for sufficiently large $n$ for all $t\ge 2$. In this follow-up paper, we determine the structure of the maximum-spread $K_{s, t}$-minor-free graph on $n$ vertices for sufficiently large $n$ and for all $t\geq s\geq 2$.

\begin{theorem} \label{thm:main}
    For $t\geq s \geq 2$ and $n$ sufficiently large, the graph(s) that maximizes the spread over the family of $K_{s,t}$-minor-free graphs on $n$ vertices has the following form 
    $$L_{max}\vee \left( \ell_0 K_t \cup \left(n-s+1- t \ell_0\right)  P_1\right)$$  where
    \begin{enumerate}
        \item $L_{max}$ is a graph on $s-1$ vertices which maximizes a function $\psi(L)$ (over all graphs $L$ on $s-1$ vertices) as follows:
        \begin{equation} \label{eq:psi}
            \psi(L)=3\sum_{v\in V(L)}d_L^2(v)-\frac{2}{s-1} \left(\sum_{v\in V(L)} d_L(v)\right)^2 - (t-1)\sum_{v\in V(L)} d_L(v).
        \end{equation}

        \item $\ell_0=\left(\frac{2}{3t}-\frac{2|E(L_{max})|}{3t(t-1)(s-1)}\right)(n-s+1) +O(n^{\epsilon})$ for any $\epsilon>0$.
    \end{enumerate}
    In particular, we have
    \begin{equation}
        \max_{G\in {\cal G}_{s,t}(n)}S(G)=  2\sqrt{(s-1)(n-s+1)} + \frac{(t-1)^2+\psi(L_{max})/(s-1)}{3\sqrt{(s-1)(n-s+1)}} + O\lp\frac{1}{n^{3/2}}\rp.
    \end{equation}
   
\end{theorem}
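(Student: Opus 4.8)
The plan is to prove matching lower and upper bounds, the upper bound being the substantial part. For the \emph{lower bound}, fix a graph $L$ on $s-1$ vertices and an integer $\ell\ge 0$ and set $G_{L,\ell}:=L\vee\paren{\ell K_t\cup(n-s+1-t\ell)P_1}$. One first checks this graph is $K_{s,t}$-minor-free: in any $K_{s,t}$-model at most $s-1$ of the $s$ branch sets on the ``$s$-side'' can meet $V(L)$, so some branch set lies inside a single $K_t$-component $C$; but then all $t$ branch sets on the other side must meet $C\cup V(L)$, which has only $t+s-1$ vertices, of which $s-1$ are already used, leaving no room for $t$ disjoint branch sets. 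Next one estimates $\lambda_1,\lambda_n$. Writing $A(G_{L,\ell})$ in $2\times 2$ block form with parts $V(L)$ (size $k:=s-1$) and $V(M)$ (where $M:=\ell K_t\cup(n-s+1-t\ell)P_1$ has size $N:=n-s+1$), the off-diagonal block is the all-ones matrix $J_{k\times N}$, contributing by itself the eigenvalues $\pm\sqrt{kN}$, while the diagonal blocks $A(L),A(M)$ have operator norm $O(1)$; hence a second-order eigenvalue expansion around $\pm\sqrt{kN}$ is legitimate. When $L=K_{s-1}$ this can be carried out exactly via the equitable partition $\{V(L),\,\bigcup K_t,\,\text{isolated vertices}\}$, whose $3\times 3$ quotient matrix carries $\lambda_1(G_{L,\ell})$ and $\lambda_n(G_{L,\ell})$ as its extreme eigenvalues; for general $L$ one additionally perturbs in the $O(1)$-dimensional $V(L)$-block.

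The key output of the expansion is that the first-order shifts of $\lambda_1$ and of $\lambda_n$ coincide (both equal $\tfrac{e(L)}{k}+\tfrac{e(M)}{N}$) and hence cancel in $S=\lambda_1-\lambda_n$, so the spread is governed at order $n^{-1/2}$ by the second-order terms, which come from the projection of the perturbation onto the $0$-eigenspace of $J_{k\times N}$ and onto the ``$-\sqrt{kN}$'' direction. Writing $x:=t\ell/N$, the $\ell$-dependent part of this second-order contribution equals $(t-1)^2\paren{x-\tfrac34 x^2}/\sqrt{kN}$ up to $L$-dependent corrections; it is maximized at $x=\tfrac23$, i.e.\ $\ell\approx\tfrac{2N}{3t}$, with value $\tfrac{(t-1)^2}{3\sqrt{kN}}$, and tracking the $e(L)$-dependence of the cross terms moves the optimum to $\ell_0=\paren{\tfrac{2}{3t}-\tfrac{2|E(L)|}{3t(t-1)(s-1)}}N+O(n^{\epsilon})$. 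The remaining $L$-dependent part collects, after this optimization, into $\tfrac{\psi(L)}{3(s-1)\sqrt{kN}}$ with $\psi$ as in \eqref{eq:psi}; maximizing over $L$ selects $L_{max}$ and yields $S(G_{L_{max},\ell_0})=2\sqrt{(s-1)(n-s+1)}+\tfrac{(t-1)^2+\psi(L_{max})/(s-1)}{3\sqrt{(s-1)(n-s+1)}}+O(n^{-3/2})$, the claimed lower bound.

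For the \emph{upper bound}, let $G$ be a spread-maximal $K_{s,t}$-minor-free graph on $n$ vertices, so by the lower bound $\lambda_1(G)-\lambda_n(G)\ge 2\sqrt{(s-1)n}-o(\sqrt n)$. Since $K_{s,t}$-minor-free graphs satisfy $e(G)=O(n)$ (Mader/Kostochka), we get $\lambda_1(G),|\lambda_n(G)|\le\sqrt{2e(G)}=O(\sqrt n)$, and combined with the previous line both equal $(1+o(1))\sqrt{(s-1)n}$. Let $\bx,\by$ be unit eigenvectors for $\lambda_1,\lambda_n$; using $\lambda_1=2\sum_{uv\in E}\bx_u\bx_v$ and $\lambda_n=2\sum_{uv\in E}\by_u\by_v$ together with minor-theoretic arguments in the spirit of \cite{Tait2019,ZL2022,LLW2023}, one shows: (i) the set $W$ of vertices of degree $\ge n^{1-\epsilon}$ satisfies $|W|\le s-1$, since $s$ vertices of near-linear degree would admit, via an expansion/Menger argument in $G-W$, a $K_{s,t}$-minor; (ii) in fact $|W|=s-1$ and $\bx,\by$ are, up to $o(1)$, flat on $V(G)\setminus W$ and heavy on $W$, else $\lambda_1|\lambda_n|=o((s-1)n)$, contradicting the lower bound; (iii) $G-W$ has all components of order $\le t$ and lies within $O(n^{\epsilon})$ edges of a disjoint union of $K_t$'s and isolated vertices, again by minor-freeness together with the flatness of $\by$ off $W$ forcing $G-W$ to be ``clustered.'' This pins $G$ down, up to $O(n^{\epsilon})$ edges, to some $G_{L,\ell}$ with $L:=G[W]$.

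It then remains to upgrade the approximate description to the exact one: $G$ cannot properly contain a copy of $L\vee\paren{\ell K_t\cup\cdots}$ (an extra edge in the clique/isolated part either grows a component past $t$ vertices or merges two of them, creating a $K_{s,t}$-minor), and it cannot omit such an edge (the ideal graph is $K_{s,t}$-minor-free, so adding a missing edge keeps $G$ in the family while strictly raising $\lambda_1$, and — after checking the effect on $\lambda_n$ through the eigenvector estimates — strictly raising the spread, contradicting maximality); hence $G=G_{L,\ell}$ exactly. Finally $\ell\mapsto S(G_{L,\ell})$ is, by the expansion, maximized within $O(n^{\epsilon})$ of $\ell_0$, and $L$ must maximize $\psi$, giving the stated form and value. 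I expect the real difficulty to lie in the upper-bound steps (i)--(iii): controlling the bottom eigenvector $\by$ entrywise precisely enough to force $G-W$ to be a near-disjoint-union of $K_t$'s (the minor-theoretic input for the bottom eigenvector being considerably more delicate than for the Perron vector), and then making the exactness argument robust given that the structural estimates carry $O(n^{\epsilon})$ slack which must be shown not to disturb the order-$n^{-1/2}$ term of the spread.
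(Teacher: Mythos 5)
Your lower-bound computation is a legitimate alternative to the paper's route: where the paper derives the functional equation $\lambda^2=\bigl(\sum_k\lambda^{-k}\mathbf{1}'A_L^k\mathbf{1}\bigr)\bigl(\sum_k\lambda^{-k}\mathbf{1}'A_R^k\mathbf{1}\bigr)$ (Lemma~\ref{l:lambda}) and then expands the two extreme roots, you do a direct $2\times 2$ block perturbation around $\pm\sqrt{kN}$. Both correctly see the cancellation of first-order shifts and both land on the same order-$n^{-1/2}$ term. That half is sound, though for the upper bound the paper's functional-equation form is more than a computational convenience: it is the mechanism through which extremality is later converted into exact structural constraints on $G[R]$.

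The genuine gap is step (iii) of your upper bound. You assert that minor-freeness together with flatness of the bottom eigenvector off $W$ forces $G-W$ to be, within $O(n^{\epsilon})$ edges, a disjoint union of $K_t$'s with components of order $\le t$. Neither ingredient delivers this. Once $K_{s-1,n-s+1}\subseteq G$ is established, minor-freeness gives only that $G[R]$ is $K_{1,t}$-minor-free, hence maximum degree $\le t-1$; this does not bound component sizes (a long path is $K_{1,3}$-minor-free but has one unbounded component). And flatness of the bottom eigenvector off $W$ only says $\sum_{uv\in E(G[R])}\mathbf{z}_u\mathbf{z}_v$ is small, which is compatible with many non-clustered $G[R]$. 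What actually forces the clique structure is a second-order extremality constraint that you never extract: the paper shows, via the expansion and Claims 2--5 of its proof, that at the maximizer $r_2:=\mathbf{1}'A_R^2\mathbf{1}=(t-1)r_1$ \emph{exactly}, i.e., every vertex of $G[R]$ has degree $0$ or $t-1$, and then a separate combinatorial argument (exploiting the $K_{s-1,n-s+1}$ and a local $K_{1,t}$-minor construction inside a $(t-1)$-regular component) upgrades each nontrivial component to $K_t$. Your proposal skips both the exact identity $r_2=(t-1)r_1$ and the minor argument for the components, and the ``approximate structure then upgrade'' scheme is not a substitute: the $O(n^{\epsilon})$ edge slack in step (iii) lives at a scale large enough to swamp the order-$n^{-1/2}$ spread term you need to control.

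Your final exactness step is also underjustified. ``Adding a missing edge strictly raises the spread'' is not a monotonicity fact: adding an edge raises $\lambda_1$ but can raise $\lambda_n$ as well, and whether $\lambda_1-\lambda_n$ increases is precisely what the delicate expansion is deciding. The paper's analogue (Lemma~\ref{lem:compbipsubgraph}) works by a specific local rewiring at a low-degree vertex and explicit estimates on both Rayleigh quotients; a generic ``check the effect on $\lambda_n$'' will not do here.
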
 

We call a pair $(s,t)$ \textit{admissible} if $L_{max}=(s-1)K_1$, i.e.,
$\psi(L)\leq 0$ and $\psi(L)=0$ only if $L=(s-1)K_1$. We determine the value of $\ell_0$ when $(s,t)$ is admissible and thus determine the precise extremal graph(s) for these cases.

\begin{theorem}\label{thm:admissible}
Let $s$ and $t$ be integers with $t\geq s\geq 2$, and suppose that the pair $(s, t)$ is admissible. 
For $n$ sufficiently large, the maximum spread over the family of $K_{s,t}$-minor-free graphs on $n$ vertices is achieved by 
$$(s-1)K_1\vee  \left( \ell_0 K_t \cup \left(n-s+1- t\ell_0\right)  P_1\right).$$
Here $\ell_0$ is the nearest integer(s) of $\ell_1:=\frac{2}{3t}\left(n-s+1-\frac{(t-1)^2}{9(s-1)}\right)$. In particular, the extremal graph is unique when $\ell_1$ is not a half-integer. Otherwise, there are two extremal graphs.
\end{theorem}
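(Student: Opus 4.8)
The plan is to combine Theorem~\ref{thm:main} with a sharp second-order expansion of the spread along the one-parameter family it isolates. Since $(s,t)$ is admissible, Theorem~\ref{thm:main} gives $L_{max}=(s-1)K_1$ (with $|E(L_{max})|=0$), so the extremal graph is
$$G_\ell:=(s-1)K_1\vee\bigl(\ell K_t\cup(N-t\ell)P_1\bigr),\qquad N:=n-s+1,$$
for some integer $\ell$ with $\ell=\tfrac{2N}{3t}+O(n^{\epsilon})$; what remains is to pin down the optimal $\ell$. Using the equitable partition of $G_\ell$ into the $m:=s-1$ dominating vertices, the $t\ell$ clique vertices, and the $N-t\ell$ isolated vertices, one checks that all non-extreme eigenvalues of $G_\ell$ --- namely $-1$, $0$, $t-1$, and the middle root of the $3\times 3$ quotient matrix --- lie strictly inside $[\lambda_n,\lambda_1]$ for large $n$, so that $\lambda_1(G_\ell)$ and $\lambda_n(G_\ell)$ are exactly the two extreme roots of the cubic $P_\ell(x)=(t-1-x)(x^2-mN)+mt\ell(t-1)$.

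I would then solve $P_\ell(x)=0$ near $x=\pm\sqrt{mN}$. Writing $X:=\sqrt{mN}$, $u:=\tfrac{t(t-1)\ell}{2N}$ and $x=\pm X+\varepsilon_\pm$, the equation $(x-(t-1))(x^2-mN)=(t-1)mt\ell$ becomes $F(\varepsilon_\pm,\pm X)=2u$ for one fixed function $F$; hence $\varepsilon_+=\varepsilon(X)$, $\varepsilon_-=\varepsilon(-X)$ for a single analytic branch $\varepsilon(X)=u+c_1X^{-1}+c_2X^{-2}+c_3X^{-3}+\cdots$, so the even-order coefficients cancel in
$$S(G_\ell)=2X+\varepsilon(X)-\varepsilon(-X)=2X+2c_1X^{-1}+2c_3X^{-3}+O(X^{-5}).$$
A short recursion gives $c_1=\tfrac12\bigl(2(t-1)u-3u^2\bigr)$ --- a downward parabola in $u$ with vertex $u_0:=\tfrac{t-1}{3}$ --- together with a concrete expression for $c_3$. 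The decisive simplification is that $3u_0-(t-1)=0$, which collapses $c_3$ near $u_0$; Taylor-expanding the $\ell$-dependent part of $S(G_\ell)$ about $u_0$ and completing the square then yields
$$S(G_\ell)=A_n-\frac{3t^2(t-1)^2}{4N^2\sqrt{mN}}\,(\ell-\ell_1)^2+O(n^{-7/2}),$$
with $A_n$ independent of $\ell$ and $\ell_1=\tfrac{2}{3t}\bigl(n-s+1-\tfrac{(t-1)^2}{9(s-1)}\bigr)$ precisely as claimed --- the constant $-\tfrac{(t-1)^2}{9(s-1)}$ being exactly the contribution of the $c_3$-term.

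To extract the integer optimizer: by Theorem~\ref{thm:main} the optimal $\ell$ has $|\ell-\ell_1|=O(n^{\epsilon})$, and for these $\ell$ the quadratic coefficient above has order $n^{-5/2}$ while the $\ell$-dependent error is $O(n^{2\epsilon-7/2})=o(n^{-5/2})$ once $\epsilon<\tfrac12$; hence $S(G_\ell)$ is maximized at a nearest integer $\ell_0$ to $\ell_1$, with $\ell_0\pm1$ and all farther integers strictly worse. Finally, for fixed $s,t$ the fractional part of $\ell_1=\tfrac{2n}{3t}+\mathrm{const}$ takes at most $3t$ values as $n$ grows; if for the given $n$ this value is not $\tfrac12$, then $\mathrm{dist}(\ell_1,\mathbb{Z})$ is bounded away from $\tfrac12$ by a constant $c(s,t)>0$, so $S(G_{\ell_0})-S(G_{\ell_0\pm1})\ge 2c(s,t)\cdot\tfrac{3t^2(t-1)^2}{4N^2\sqrt{mN}}+O(n^{-7/2})>0$ for large $n$ and the extremal graph is unique, while if $\ell_1$ is a half-integer both nearest integers are the candidates for the maximum.

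The step I expect to be the main obstacle is the second-order perturbative bookkeeping --- carrying the coefficient $c_3$ accurately and exploiting the cancellation $3u_0-(t-1)=0$ to produce the exact shift $-\tfrac{(t-1)^2}{9(s-1)}$ inside $\ell_1$ --- while keeping the remainder uniformly controlled over the $O(n^{\epsilon})$ range of candidate $\ell$; the ensuing comparison is delicate only in that the winning margin $\Theta(n^{-5/2})$ sits just one order above the discarded error, which is what makes the half-integer dichotomy clean.
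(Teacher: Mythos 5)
Your proposal takes a genuinely different route from the paper. The paper first depresses the cubic: substituting $\lambda = x + \tfrac{t-1}{3}$ into
$\lambda^3 - (t-1)\lambda^2 - (s-1)(n-s+1)\lambda + (s-1)(t-1)(n-s+1-\ell t) = 0$
turns it into $x^3 - px + q = 0$ with $p$ independent of $\ell$ and $q$ a \emph{linear} function of $\ell$ that vanishes exactly at $\ell_1$. It then parametrizes the three roots trigonometrically, obtaining the exact closed form $S = 2\sqrt{p}\,\sin\!\left(\alpha + \tfrac{\pi}{3}\right)$ with $\alpha = \tfrac13\arccos\!\left(-\tfrac{q/2}{(p/3)^{3/2}}\right)$, and concludes that $S$ is strictly decreasing in $|q|$. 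Maximizing $S(G_\ell)$ is therefore \emph{exactly} equivalent to minimizing $|\ell - \ell_1|$, with no asymptotics needed. Your approach instead leaves the cubic undepressed and does a Taylor/perturbative expansion of the two extreme roots about $\pm\sqrt{(s-1)(n-s+1)}$; you exploit the even/odd cancellation in $\varepsilon(X)-\varepsilon(-X)$ to drop half the terms, locate the $c_3$-driven shift $-\tfrac{(t-1)^2}{9(s-1)}$, and complete the square. Your coefficient calculus is consistent (I verified $c_1 = \tfrac12(2(t-1)u - 3u^2)$, that $c_3'(u_0) = -\tfrac{(t-1)^3}{9}$ is what is needed for the stated $\ell_1$, and that the winning margin $\Theta(n^{-5/2})$ indeed dominates the $O(n^{-7/2})$ error uniformly over $|\ell-\ell_1|=O(n^{\epsilon})$, $\epsilon<\tfrac12$). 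The paper's route is shorter and exact; yours is more computational but does produce the full second-order expansion of $S(G_\ell)$ as a byproduct.

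There is one genuine gap: the half-integer case. Your argument shows that when $\ell_1$ is a half-integer, both nearest integers give a spread within $O(n^{-7/2})$ of optimal --- you say they are ``candidates'' --- but that is weaker than the theorem's claim that \emph{both} attain the maximum. The unexamined error term could in principle break the tie. The clean fix is exactly the paper's observation: after depressing the cubic, $q$ is linear in $\ell$ with $q(\ell_1)=0$, so when $\ell_1$ is a half-integer one has $q(\ell_0) = -q(\ell_0-1)$; since $x^3 - px + q$ and $x^3 - px - q$ have identical spreads (send $x\mapsto -x$), it follows that $S(G_{\ell_0}) = S(G_{\ell_0-1})$ \emph{exactly}. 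You should import this symmetry (or an equivalent exact argument) to close the half-integer case; as written the perturbative comparison cannot certify the tie.
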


Furthermore, we determine all of the admissible pairs $(s, t)$.

\begin{theorem}\label{thm:admpairs}
A pair $(s, t)$ with $s\le t$ is admissible if and only if $t \geq \frac32(s-3) +\frac{4}{s-1}$.
\end{theorem}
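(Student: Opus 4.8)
The plan is to recast admissibility as a one‑variable optimization. For a graph $L$ on $s-1$ vertices write $m=|E(L)|$ (so $\sum_v d_L(v)=2m$) and $Q(L)=\sum_v d_L^2(v)$; then $\psi(L)=3Q(L)-\tfrac{8m^2}{s-1}-2(t-1)m$. Since $\psi$ vanishes on the empty graph, $(s,t)$ is admissible if and only if $\psi(L)<0$ for every nonempty $L$ on $s-1$ vertices, equivalently
$$t>g(L):=1+\frac{3Q(L)}{2m}-\frac{4m}{s-1}\qquad\text{for every nonempty }L\text{ on }s-1\text{ vertices,}$$
that is, if and only if $t>\max_L g(L)$. (I assume $s\ge 3$; for $s=2$ there is no nonempty graph on $s-1$ vertices, so this regime is degenerate.)

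For the only‑if direction I would test the star $K_{1,s-2}$: with $m=s-2$ and $Q=(s-1)(s-2)$ one computes $g(K_{1,s-2})=1+\tfrac{3(s-1)}{2}-\tfrac{4(s-2)}{s-1}=\tfrac32(s-3)+\tfrac{4}{s-1}$. Hence if $t<\tfrac32(s-3)+\tfrac{4}{s-1}$ then $t<g(K_{1,s-2})$, so $\psi(K_{1,s-2})>0$ and $(s,t)$ is not admissible.

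For the if‑direction I must show $\max_L g(L)<t$ whenever $t\ge s$ and $t\ge\tfrac32(s-3)+\tfrac{4}{s-1}$; the core of the work is determining $\max_L g(L)$. For fixed $m$, $g$ is increasing in $Q$, so a $g$‑maximizer among $m$‑edge graphs maximizes $\sum_v d_v^2$; by the standard edge‑rotation argument (moving an edge off a lower‑degree endpoint onto a vertex of strictly higher degree increases $\sum_v d_v^2$) such a maximizer is a threshold graph, and among threshold graphs on a fixed vertex set with a prescribed number of edges the maximum of $\sum_v d_v^2$ is attained by a quasi‑star or a quasi‑clique. It then remains to optimize $g$ over these two families: the quasi‑stars with at most $s-2$ edges are the stars $K_{1,m}$, for which $g(K_{1,m})=1+\tfrac32(m+1)-\tfrac{4m}{s-1}$ is increasing in $m$ (as $\tfrac32>\tfrac{4}{s-1}$ for $s\ge 4$), so that family is maximized at $m=s-2$, while denser quasi‑stars give smaller $g$; the cliques satisfy $g(K_r)=1+(r-1)\big(3-\tfrac{2r}{s-1}\big)$, a concave function of $r$ whose maximum is $\le g(K_{1,s-2})$ once $s\ge 6$ (the partial quasi‑cliques — a clique plus one vertex joined to some of its vertices — are handled by a similar short estimate and never exceed the clique values in their block). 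Consequently, for $s\ge 6$ one has $\max_L g(L)=g(K_{1,s-2})=\tfrac32(s-3)+\tfrac{4}{s-1}$, while for $s\in\{3,4,5\}$, where $L$ has at most four vertices, a direct finite inspection gives $\max_L g(L)=s-1$.

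Granting this, the proof finishes quickly. For $s\ge 6$: since $\tfrac32(s-3)+\tfrac{4}{s-1}=\tfrac{3(s-1)}{2}-3+\tfrac{4}{s-1}$ is an integer only for $s\in\{3,5\}$, it is not an integer when $s\ge 6$, so the integrality of $t$ turns $t\ge\tfrac32(s-3)+\tfrac{4}{s-1}$ into $t>\tfrac32(s-3)+\tfrac{4}{s-1}=\max_L g(L)$, whence $(s,t)$ is admissible. For $s\in\{3,4,5\}$: $\max_L g(L)=s-1<s\le t$, so $(s,t)$ is admissible, and since $\tfrac32(s-3)+\tfrac{4}{s-1}\le s$ in this range the hypothesis $t\ge s$ already gives $t\ge\tfrac32(s-3)+\tfrac{4}{s-1}$, so the two assumptions coincide with $s\le t$. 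The step I expect to be hardest is the determination of $\max_L g(L)$ — in particular bounding the quasi‑clique family (including the partial quasi‑cliques) below $g(K_{1,s-2})$ for $s\ge 6$, since $g$ is not monotone in the number of edges and for small $s$ it is genuinely the clique, not the star, that maximizes $g$, which is exactly why the constraint $t\ge s$ cannot be dropped there. A secondary but necessary point is the arithmetic observation that $\tfrac32(s-3)+\tfrac{4}{s-1}$ fails to be an integer for every $s\ge 6$, which permits the clean passage from the strict bound $t>\max_L g(L)$ to the non‑strict inequality in the statement.
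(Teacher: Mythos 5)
Your ``only if'' direction is the same calculation as the paper's (test $K_{1,s-2}$). For the ``if'' direction you take a genuinely different route: you aim to determine $\max_L g(L)$ exactly, where $g(L)=1+\frac{3\sum_v d_v^2}{2m}-\frac{4m}{s-1}$, by invoking an Ahlswede--Katona-type extremal result (that among $m$-edge graphs on a fixed vertex set, $\sum_v d_v^2$ is maximized by a quasi-star or a quasi-complete graph) and then optimizing $g$ over those two families. The paper instead applies de Caen's inequality to dispose of the entire range $t\geq\frac32(s-3)+1$ in Lemma~\ref{lem:largetadm}, and then handles the one remaining stratum ($s\geq 10$ even, $t=\frac32 s-4$) by combining de Caen's and Das's inequalities with a short induction on $s$. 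Your approach would yield the cleaner identity $\max_L g(L)=g(K_{1,s-2})$ for $s\geq 6$, whereas the paper's avoids the extremal-graph classification at the cost of an ad hoc case split.

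However, your if-direction has two concrete gaps that would need to be filled before this is a proof. First, you assert without argument that ``denser quasi-stars give smaller $g$''; since $g$ combines a term increasing in $\sum_v d_v^2$ with a term decreasing in $m$, and a denser quasi-star raises both, this is not obvious and requires its own calculation. Second, and as you yourself flag, the partial quasi-complete graphs ($K_a$ together with one additional vertex joined to $b$ of the clique vertices, $0<b<a$) are dismissed with only a placeholder remark. The pure clique calculation you do carry out is correct: the real maximum of $g(K_r)=1+(r-1)\bigl(3-\frac{2r}{s-1}\bigr)$ is at $r^*=\frac{3s-1}{4}$ with value $1+\frac{(3s-5)^2}{8(s-1)}$, and one checks this is at most $g(K_{1,s-2})=\frac32(s-3)+\frac{4}{s-1}$ if and only if $3s^2-26s+51\geq 0$, i.e.\ $s\leq 3$ or $s\geq 6$. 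But that only bounds integral cliques, and an interpolation argument controlling $g$ on the $QC_{a,b}$ graphs between consecutive cliques is genuinely needed, precisely because $g$ is not monotone in $m$. Until both of those steps are supplied, this is a plausible plan rather than a complete proof; the paper's de Caen/Das route sidesteps exactly this by using global inequalities for $\sum_v d_v^2$ rather than classifying the extremal graphs.
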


The smallest non-admissible pair is $(s,t)=(8,8)$.

Our paper is organized as follows. In Section \ref{sec:notation_lemmas}, we recall some useful lemmas and prove that in any maximum-spread $K_{s, t}$-minor-free graph $G$, there are $(s-1)$ vertices $u_1,\ldots, u_{s-1}$ which are adjacent to all other vertices in $G$. In Section \ref{sec:main_thm}, we show that $G - \{u_1,\ldots, u_{s-1}\}$ is a disjoint union of cliques on $t$ vertices and isolated vertices and complete the proofs of Theorems~\ref{thm:main}, \ref{thm:admissible}, and \ref{thm:admpairs}. The non-admissible cases are more complicated and will be handled in a sequel.

\section{Notation and Lemmas}\label{sec:notation_lemmas}
Let $G$ be a graph which attains the maximum spread among all $n$-vertex $K_{s,t}$-minor-free graphs, and $\lambda_1\geq \cdots \geq \lambda_n$ be the eigenvalues of $A(G)$. 
We first recall the following result by Mader \cite{Mader1967}.

\begin{theorem}[\cite{Mader1967}]\label{thm:linear-edge-bound}
    For every positive integer $t$, there exists a constant $C_t$ such that every graph with average degree at least $C_t$ contains a $K_t$ minor.
\end{theorem}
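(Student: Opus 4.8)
The plan is to prove the standard quantitative form, which is what is actually needed: for every integer $r\ge 2$, every simple graph $G$ with $|E(G)|\ge 2^{r-2}|V(G)|$ contains a $K_r$-minor. Since the average degree of $G$ equals $2|E(G)|/|V(G)|$, this yields the theorem with $C_t=2^{t-1}$; this constant is far from optimal (the truth is $\Theta(t\sqrt{\log t})$, by Kostochka and by Thomason), but any constant suffices here. I would argue by induction on $r$. The base case $r=2$ is immediate: a graph with $|E(G)|\ge|V(G)|\ge 1$ contains an edge, hence a $K_2$-minor.

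For the inductive step, fix $r\ge 3$, assume the claim for $r-1$, put $c:=2^{r-2}$, and let $G$ satisfy $|E(G)|\ge c\,|V(G)|$. Among all minors $H$ of $G$ satisfying $|E(H)|\ge c\,|V(H)|$ --- a nonempty family, since it contains $G$ --- choose one with $|V(H)|$ minimum. The key consequence of this minimality is a \emph{local density} property: every edge $xy\in E(H)$ has at least $c$ common neighbors in $H$. Indeed, contracting $xy$ yields a minor $H/xy$ of $G$ with one fewer vertex, so minimality gives $|E(H/xy)|<c\,|V(H/xy)|=c(|V(H)|-1)$; on the other hand $|E(H/xy)|=|E(H)|-1-|N_H(x)\cap N_H(y)|$, because contraction deletes the edge $xy$ and identifies one pair of parallel edges for each common neighbor. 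Combining this with $|E(H)|\ge c\,|V(H)|$ forces $|N_H(x)\cap N_H(y)|>c-1$, i.e.\ at least $c$. (Minimality also forbids isolated vertices in $H$, so in fact $\delta(H)\ge c$.)

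To conclude, fix any $v\in V(H)$ and consider $H[N_H(v)]$. For each $u\in N_H(v)$ the edge $uv$ has at least $c$ common neighbors, all lying in $N_H(v)$; hence $u$ has at least $c$ neighbors inside $N_H(v)$, so $H[N_H(v)]$ has minimum degree at least $c=2^{r-2}$, and therefore $|E(H[N_H(v)])|\ge \frac{c}{2}\,|N_H(v)|=2^{(r-1)-2}|N_H(v)|$. By the induction hypothesis, $H[N_H(v)]$ has a $K_{r-1}$-minor; adjoining $v$, which is adjacent to all of $N_H(v)$ and hence to each of its branch sets, produces a $K_r$-minor in $H$ and thus in $G$. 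I expect the one genuine obstacle to be conceptual rather than computational: recognizing that the right quantity to minimize is (the number of vertices of) a minor that is dense in the sense $|E|\ge c|V|$, and that the minimality of that minor automatically makes the neighborhood of every vertex dense enough to invoke the inductive hypothesis. Everything after that is the bookkeeping identity for $|E(H/xy)|$ and a short chase of two linear inequalities, together with a little care over degenerate cases (the empty graph, isolated vertices), all of which the choice of $H$ disposes of.
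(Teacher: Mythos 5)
The paper does not prove this result; it is quoted from Mader~\cite{Mader1967} and used as a black box, so there is no in-paper proof for me to compare against. Your argument is the classical one (essentially Mader's original argument, the version found in standard texts such as Diestel's \emph{Graph Theory}): minimize over minors $H$ with $|E(H)|\ge c|V(H)|$, use the simple-graph contraction identity $|E(H/xy)|=|E(H)|-1-|N_H(x)\cap N_H(y)|$ to show every edge of the minimal $H$ lies in at least $c$ triangles, hence every neighborhood $H[N_H(v)]$ has minimum degree at least $c=2^{r-2}$ and by induction contains a $K_{r-1}$-minor, which together with $v$ yields a $K_r$-minor. The bookkeeping is right (including the degenerate-case checks: minimality rules out vertices of degree below $c$, so $N_H(v)\ne\varnothing$), and the induction is set up correctly. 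The only small thing worth saying explicitly is that the base case $r=2$ implicitly needs $|V(G)|\ge 1$, which is automatic from "average degree at least $C_t$"; you essentially address this by noting the family of dense minors is nonempty. The constant $C_t=2^{t-1}$ you obtain is, as you say, far from the optimal $\Theta(t\sqrt{\log t})$ of Kostochka and Thomason, but the paper only needs existence of some $C_t$, so this is exactly the right level of effort. Correct and appropriate.
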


\begin{corollary}\label{cor:Mader}
Let $s$ and $t$ be positive integers with $s\leq t$. There exists a constant $C_0$ such that for any $K_{s,t}$-minor-free graph $G$ on $n>0$ vertices,
$$|E(G)|\leq C_0n.$$
\end{corollary}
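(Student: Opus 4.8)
The plan is to deduce the linear edge bound directly from Mader's theorem (Theorem~\ref{thm:linear-edge-bound}) applied with parameter $t$, using the fact that a $K_{s,t}$-minor-free graph is in particular $K_t$-minor-free. First I would observe that $K_{s,t}$ contains $K_s$ as a subgraph, and more to the point, $K_t$ contains $K_{s,t}$ as a \emph{minor}: indeed, since $s \le t$, one can take the complete graph $K_{s+t}$ and contract a perfect-ish partition, but more simply $K_{s,t}$ is a subgraph of $K_{s+t}$, and $K_{s+t}$ has $K_t$ as a minor only if $s+t \le$ something — so instead the cleaner route is to note that $K_{s,t}$ is a subgraph of $K_{t,t}$, which has a $K_{t+1}$ minor, hence a $K_t$ minor; thus any graph containing a $K_t$ minor contains a $K_{s,t}$ minor (contract within one side of $K_{t,t}$ appropriately). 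Consequently, if $G$ is $K_{s,t}$-minor-free then $G$ is $K_{C_t}$-minor-free... wait, I should be careful: I want to conclude $G$ has bounded average degree. Let me restate: since $K_{s,t}$ has a $K_{s+1}$-minor (as $s \le t$, delete $t - s$ vertices from the large side to get $K_{s,s} \supseteq$ ... actually $K_{s,s}$ has a $K_{s+1}$ minor by contracting a matching of size... hmm). The robust statement I will use is simply: $K_{s,t}$ contains $K_{\lceil (s+t)/2 \rceil + 1}$...

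Let me instead take the genuinely clean path. It suffices to show that any $K_{s,t}$-minor-free graph is $K_h$-minor-free for some $h = h(s,t)$. Since $s \le t$, the graph $K_{t+1}$ contains $K_{s,t}$ as a subgraph (partition $t+1 \ge s + t$? no). OK: $K_{s+t}$ contains $K_{s,t}$ as a subgraph, trivially. And $K_{s+t}$ is itself a minor of any sufficiently highly connected graph. So set $h := s+t$. Then: if $G$ contains a $K_h = K_{s+t}$ minor, it contains a $K_{s,t}$ subgraph-of-a-minor, hence a $K_{s,t}$ minor. Contrapositively, a $K_{s,t}$-minor-free graph $G$ is $K_{s+t}$-minor-free. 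Now apply Theorem~\ref{thm:linear-edge-bound} with the parameter $s+t$ in place of $t$: there is a constant $C_{s+t}$ such that any graph with average degree at least $C_{s+t}$ contains a $K_{s+t}$ minor, contradiction. Hence every vertex-set restriction... rather, $G$ itself has average degree $< C_{s+t}$, i.e. $2|E(G)|/n < C_{s+t}$, which gives $|E(G)| \le \frac{C_{s+t}}{2} n$. Setting $C_0 := C_{s+t}/2$ (which depends only on $s$ and $t$) completes the proof.

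The only genuine obstacle is the combinatorial claim that $K_{s,t}$-minor-free implies $K_{s+t}$-minor-free, i.e.\ that $K_{s+t}$ "contains" $K_{s,t}$ in the minor order; this is immediate since $K_{s,t}$ is a spanning subgraph of $K_{s+t}$ and the subgraph relation refines the minor relation. Everything else is a one-line application of Mader's theorem. I would write this up in three or four sentences, being mildly careful to state that $C_0$ depends only on $s, t$ (through $C_{s+t}$) and not on $n$, and that the bound holds for all $n > 0$ since for small $n$ one may absorb the $\binom n2$ edges into the constant if necessary.
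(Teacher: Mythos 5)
Your final argument is correct and is exactly the intended one: since $K_{s,t}$ is a (spanning) subgraph of $K_{s+t}$, it is a minor of $K_{s+t}$, so any graph with a $K_{s+t}$ minor has a $K_{s,t}$ minor; contrapositively, a $K_{s,t}$-minor-free graph is $K_{s+t}$-minor-free, and Mader's theorem (Theorem~\ref{thm:linear-edge-bound} applied with parameter $s+t$) then bounds the average degree by $C_{s+t}$, giving $|E(G)| \le \tfrac{1}{2}C_{s+t}\, n$ for all $n$. You should, however, strike the false starts from the write-up: the claim that ``$K_t$ contains $K_{s,t}$ as a minor'' is impossible on vertex-count grounds ($s+t > t$), and the detour through $K_{t,t}$ having a $K_{t+1}$ minor points in the wrong direction (you need $K_{s,t}$ to be a minor of some clique, not the other way around). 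The closing remark about absorbing $\binom{n}{2}$ into the constant for small $n$ is also unnecessary, since the Mader bound already holds for every $n$.
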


Kostochka and Prince~\cite{KP08} gave a better upper bound on the maximum number of edges in a $K_{s,t}$-minor-free graph when $t$ is sufficiently large compared to $s$.
\begin{theorem}\cite{KP08}\label{thm:KP_extremal}
Let $t \ge (180s\log_2s)^{1+6s\log_2s}$ be a positive integer, and $G$ be a graph on $n\geq s+t$ vertices with no $K_{s,t}$ minor. Then
$$|E(G)|\leq \frac{t+3s}{2}(n-s+1).$$
\end{theorem}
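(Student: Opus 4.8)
\medskip
\noindent\emph{Proof proposal.}
This is a result of Kostochka and Prince; I sketch the strategy and isolate the one step I would prove from scratch. First I would reduce to the case of large minimum degree: it suffices to prove the bound for $K_{s,t}$-minor-free graphs $G$ with $\delta(G) > \frac{t+3s}{2}$. One inducts on $n \ge s+t$. For $n = s+t$ we have $e(G) \le \binom{s+t}{2} \le \frac{t+3s}{2}(t+1)$, since $(t+3s)(t+1) - (s+t)(s+t-1) = s(t-s) + 4s + 2t \ge 0$. For $n > s+t$, if some vertex $v$ has $d_G(v) \le \frac{t+3s}{2}$ then $G-v$ is $K_{s,t}$-minor-free on $n-1 \ge s+t$ vertices, so by induction $e(G-v) \le \frac{t+3s}{2}(n-s)$ and hence $e(G) = e(G-v) + d_G(v) \le \frac{t+3s}{2}(n-s+1)$; otherwise $\delta(G) > \frac{t+3s}{2}$ and we are in the reduced case. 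This reduction is valid but not conclusive: large minimum degree alone does not force a $K_{s,t}$ minor (for instance $K_{s-1}\vee (mK_t)$ is $K_{s,t}$-minor-free for every $m$ and has minimum degree $t+s-2 > \frac{t+3s}{2}$), so one must also exploit that a counterexample to the reduced statement has $e(G) > \frac{t+3s}{2}(n-s+1)$, i.e.\ many edges and not merely large minimum degree.

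For the reduced statement I would fix a $K_{s,t}$-minor-free $G$ with $\delta(G) > \frac{t+3s}{2}$ and $e(G) > \frac{t+3s}{2}(n-s+1)$ and derive a contradiction by exhibiting a $K_{s,t}$ minor. I would use the reformulation that $G$ has such a minor iff there are pairwise disjoint sets $A_1,\dots,A_s \subseteq V(G)$, each inducing a connected subgraph, with $\bigl|\bigcap_{i=1}^{s} N(A_i) \setminus \bigcup_{i=1}^{s} A_i\bigr| \ge t$ (the $t$-side branch sets taken to be single vertices of the common neighborhood). The plan is to build these $s$ disjoint connected cores greedily, maintaining a common-neighborhood pool $P$ of size at least $t$: the bound $\delta(G) > \frac{t+3s}{2}$ keeps $P$ dense enough that at each stage there is a \emph{short} connected core dominating all but a few vertices of $P$ (after bridging the few components of $G[P]$ with a bounded number of outside vertices), while the edge surplus $e(G) > \frac{t+3s}{2}(n-s+1)$ is used to rule out the bad case in which $G[P]$ is too sparse for this — that case would pin $G$ down to a near-extremal, clique-sum-like graph, essentially $K_{s-1}$ joined to a disjoint union of cliques, which must then be checked to satisfy the bound directly.

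The main obstacle is this dichotomy together with the quantitative bookkeeping in the greedy step: each core extracted must be small relative to the part of $P$ it dominates, and making this precise forces a recursion that reduces a $K_{s,t}$-minor problem to a $K_{s-1,t'}$-minor problem, losing a multiplicative factor of order $s\log_2 s$ in the admissible value of $t$ at each of $\Theta(s\log_2 s)$ levels; this is exactly what yields the threshold $t \ge (180s\log_2 s)^{1+6s\log_2 s}$. This recursive extremal argument is the heart of \cite{KP08}, whose proof I would follow for this step; the minimum-degree reduction in the first paragraph is the only part I would carry out independently.
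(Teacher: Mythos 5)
The paper does not prove this statement: it is cited directly from Kostochka and Prince \cite{KP08} and used as a black box (see the proof of Corollary~\ref{cor:Mader} and the $O(n)$ edge-count estimates downstream), so there is no internal proof to compare against.

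Your first paragraph is correct and self-contained: the base case $n = s+t$ verifies since $(t+3s)(t+1) - (s+t)(s+t-1) = s(t-s) + 4s + 2t \ge 0$ for $t \ge s \ge 1$, and the vertex-deletion step is the standard reduction to minimum degree $> \tfrac{t+3s}{2}$. You are also right that this reduction is not by itself conclusive, and the example $K_{s-1}\vee (mK_t)$ with minimum degree $s+t-2$ illustrates why the edge surplus, not just the degree condition, must be used. The remainder of your proposal, however, is not a proof. You describe a plausible top-level strategy (build $s$ disjoint connected branch sets with a large common neighborhood, then recurse from $K_{s,t}$ to $K_{s-1,t'}$ under a density/sparsity dichotomy) and then explicitly defer the quantitative core to \cite{KP08}. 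The account of where the threshold $(180s\log_2 s)^{1+6s\log_2 s}$ comes from — losing a factor of order $s\log_2 s$ at each of $\Theta(s\log_2 s)$ recursion levels — is a reverse-engineered rationalization of the bound rather than something you have derived or verified. So what you have is: a correct and independently proved reduction to the minimum-degree case, a correct identification of the structural dichotomy, and a citation for the genuinely hard recursive extremal argument. Given that the paper itself only cites this theorem, deferring to \cite{KP08} for that step is appropriate, but you should not present the second and third paragraphs as more than an informed sketch.
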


We mention here that in the case of $s=2$, Chudnovsky, Reed and Seymour~\cite{CRS2011} showed a tight upper bound
$|E(G)|\leq \frac{1}{2}(t+1)(n-1)$ for the number of edges in a $K_{2, t}$-minor-free graph $G$ for any $t\ge 2$, which extends an earlier result of Myers \cite{Myers2008}.

We also need the following theorem by Thomason \cite{Thomason2007} on the number of edges of $K_{s,t}$-minor-free bipartite graphs.

\begin{theorem}\cite{Thomason2007}\label{thm:bipartite_edge_bound}
Let $G$ be a bipartite graph with at least $(s-1)n + 4^{s+1}s!tm$ edges, where $n,m>0$ are the sizes of the two parts of $G$. Then $G$ has a $K_{s,t}$-minor.
\end{theorem}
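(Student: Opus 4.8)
The plan is to argue by induction on $s$, with the edge bound tuned so that the induction closes. For the base case $s=1$ one only needs a $K_{1,t}$-minor, i.e.\ a star with $t$ leaves: the hypothesis becomes $|E(G)|\ge 4^{2}tm=16tm$, so some vertex of $B$ has degree at least $16t\ge t$ and we even obtain a $K_{1,t}$ subgraph.

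Before the inductive step I would clean $G$ to boost the minimum degree on each side while keeping the edge inequality. Repeatedly delete from $A$ any vertex of current degree at most $s-1$ and from $B$ any vertex of current degree at most $4^{s+1}s!t-1$. Deleting such a vertex of $A$ removes at most $s-1$ edges and lowers $(s-1)|A|+4^{s+1}s!t|B|$ by exactly $s-1$, while deleting such a vertex of $B$ removes strictly fewer than the $4^{s+1}s!t$ edges by which that quantity drops; so the hypothesis $|E|\ge (s-1)|A|+4^{s+1}s!t|B|$ survives, and since $m>0$, summing the degrees of the deleted vertices at their moments of deletion shows the process cannot strip the whole graph. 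Thus we may pass to a nonempty subgraph, still satisfying the hypothesis for its own part sizes, in which every vertex of the $A$-side has degree $\ge s$ and every vertex of the $B$-side has degree $\ge 4^{s+1}s!t$; a minor of it is a minor of $G$.

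Next I would split into cases according to the balance of the two parts. If $n$ is huge relative to $m$, say $n\ge t\binom{m}{s}$, then each $a\in A$ contains some $s$-subset of its $B$-neighborhood, and by pigeonhole some fixed $s$-set $S\subseteq B$ lies in the neighborhood of at least $n/\binom{m}{s}\ge t$ vertices of $A$, so $S$ together with $t$ of those vertices is a $K_{s,t}$ subgraph. Otherwise, pick $b^{*}\in B$: by the degree condition $W:=N(b^{*})\subseteq A$ has $|W|\ge 4^{s+1}s!t$, and each vertex of $W$ keeps degree $\ge s-1$ in $G-b^{*}$. The goal is to apply the inductive hypothesis to a suitable edge-rich subgraph sitting inside the second neighborhood of $b^{*}$ and extract from it a $K_{s-1,t}$-minor of $G-b^{*}$ whose $t$ branch sets on the $t$-side each meet $W$; then $\{b^{*}\}$ can be adjoined as an $s$-th branch set on the $s$-side, since $b^{*}$ is adjacent to all of $W$. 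If one of those branch sets is a single vertex lying outside $W$, it has a neighbor in $W$ which we append, using the slack $|W|\ge 4^{s+1}s!t$ to reserve distinct vertices of $W$ across the at most $t$ branch sets.

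The crux, and where the real work lies, is choosing the subgraph to recurse on so that its edge count exceeds $(s-2)n''+4^{s}(s-1)!tm''$ for its part sizes $n'',m''$. Recursing on $G[W\cup N(W)]-b^{*}$ only gives the bound $|E|\ge (s-1)|W|$ coming from the degree condition on $W$, which closes the induction only when $m$ is bounded in terms of $s$; to cover the remaining range of $m$ one must instead choose $b^{*}$ to maximise $\sum_{a\in N(b^{*})}d_G(a)$, which by $\sum_{b\in B}\sum_{a\in N(b)}d_G(a)=\sum_{a\in A}d_G(a)^{2}$ is at least $\frac{1}{m}\sum_{a\in A}d_G(a)^{2}$, so that the neighborhood of $b^{*}$ is itself edge-dense, and then trim it. Balancing the two losses in passing from $s$ to $s-1$ — the $n$-coefficient dropping from $s-1$ to $s-2$, and the $m$-constant shrinking by the factor $4s$ — against the gains furnished by the minimum-degree conditions is exactly what pins down the constant $4^{s+1}s!$ and is the delicate part of the argument.
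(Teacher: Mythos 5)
The paper cites this as a known result of Thomason \cite{Thomason2007} and does not prove it, so there is no in-paper argument to compare against; I can only judge your proposal on its own merits. The setup is sensible — induction on $s$, a cleaning step to get minimum degree $\geq s$ on the $n$-side and $\geq 4^{s+1}s!t$ on the $m$-side (the accounting there checks out, as does the base case and the pigeonhole case $n\geq t\binom{m}{s}$) — but the crux of the inductive step is explicitly left open. You yourself observe that recursing on $G[W\cup N(W)]-b^{*}$ with only the degree bound $|E|\geq (s-1)|W|$ clears the threshold $(s-2)n''+4^{s}(s-1)!t\,m''$ only when $m''=|N(W)|-1$ is bounded, and you gesture at choosing $b^{*}$ to maximize $\sum_{a\in N(b^{*})}d(a)$ and then "trimming," but you never specify the subgraph to recurse on nor verify its edge count. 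Since that verification is precisely what forces the constant $4^{s+1}s!$ and makes the induction close, the argument as written does not establish the theorem; it identifies the shape of a proof without producing one.

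There is also a secondary gap in the gluing step. To adjoin $\{b^{*}\}$ as the $s$-th branch set, every one of the $t$ branch sets of the inductively obtained $K_{s-1,t}$-minor must contain a vertex of $W=N(b^{*})$. Your remedy — append a neighbor in $W$ to any $t$-side branch set that misses $W$, using the slack in $|W|$ — is not justified: such a branch set need not have any neighbor in $W$ (it may sit entirely inside $N(W)$ together with a few vertices of $W$ already consumed by other branch sets), and appending vertices must simultaneously preserve connectivity and pairwise disjointness against all $s-1+t$ existing branch sets, some of which may already occupy $W$. This has to be built into the construction of the minor (for instance by insisting a priori that each $t$-side branch set be rooted at a distinct reserved vertex of $W$), not patched afterwards. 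Until both of these are resolved the proposal is a plan, not a proof.
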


\begin{corollary}\label{cor:star-minor}
Suppose $G$ is a bipartite graph on $n$ vertices such that one part has at most $c \sqrt{n}$ vertices for some fixed constant $c>0$. If $G$ is $K_{1,t}$-minor-free, then $|E(G)| < 16tc\sqrt{n}$.
\end{corollary}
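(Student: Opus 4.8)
The plan is to apply Theorem~\ref{thm:bipartite_edge_bound} directly with $s=1$ and a suitable bookkeeping of which side of the bipartition plays the role of the ``small'' part $m$. Write the bipartition of $G$ as $A \cup B$ with $|A| \le c\sqrt{n}$ and $|B| \le n$. Setting $s = 1$ in Theorem~\ref{thm:bipartite_edge_bound}, a bipartite graph with parts of sizes $n_1, n_2 > 0$ and at least $0 \cdot n_1 + 4^{2}\cdot 1! \cdot t \cdot n_2 = 16 t n_2$ edges has a $K_{1,t}$-minor. Here the roles of ``$n$'' and ``$m$'' in that theorem are interchangeable (the statement is symmetric in the two parts up to relabelling), so we are free to take the part of size $m$ to be $A$, the small side. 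Thus if $|E(G)| \ge 16 t |A|$, then $G$ contains a $K_{1,t}$-minor, contradicting the hypothesis. Hence $|E(G)| < 16 t |A| \le 16 t c \sqrt{n}$, which is the claimed bound.

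The only genuine point requiring care is the degenerate value $s=1$: one must check that Theorem~\ref{thm:bipartite_edge_bound} (or at least the portion of its proof that we invoke) remains valid, or is at any rate true, when $s=1$, since the coefficient $(s-1)n$ then vanishes and $4^{s+1}s! = 16$. A $K_{1,t}$-minor is simply a vertex adjacent (after contractions) to $t$ branch sets, i.e.\ a connected subgraph containing a vertex of ``branch-degree'' $t$; equivalently $G$ has a $K_{1,t}$-minor iff some connected component of $G$ has at least $t+1$ vertices, or more precisely has a spanning structure admitting a degree-$t$ vertex after contraction, which for our purposes reduces to a component with enough edges. If one prefers not to rely on the $s=1$ case of the cited theorem, an alternative is an elementary direct argument: a bipartite graph with parts $A, B$, $|A| = a$, and more than $16ta$ edges has a vertex in $A$ of degree $> 16t \ge t$ (by averaging), and a vertex of degree $t$ already yields a $K_{1,t}$-subgraph, hence a $K_{1,t}$-minor; so $K_{1,t}$-minor-freeness forces $|E(G)| \le t|A| \le t c\sqrt n < 16tc\sqrt n$. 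Either route gives the corollary, and in fact the elementary route gives the stronger constant $t$ in place of $16t$; we state the weaker bound since it is all that is needed downstream and matches the form of Theorem~\ref{thm:bipartite_edge_bound}.

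I expect no real obstacle here; the ``hard part,'' such as it is, is purely a matter of correctly matching the parameters $n, m$ of Theorem~\ref{thm:bipartite_edge_bound} to the two sides of $G$ and confirming that the bound $16 t c \sqrt n$ absorbs the constants $4^{s+1} s!$ at $s=1$. One should also note that the hypothesis $n_1, n_2 > 0$ in Theorem~\ref{thm:bipartite_edge_bound} is automatic unless $G$ has no edges, in which case the conclusion $|E(G)| < 16tc\sqrt n$ holds trivially, so no separate edge case is needed.
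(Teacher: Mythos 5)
Your proof is correct; the paper states Corollary~\ref{cor:star-minor} without a written proof, and the intended derivation is exactly what you do: set $s=1$ in Theorem~\ref{thm:bipartite_edge_bound}, take the small side of size $m = |A| \le c\sqrt{n}$, and observe that $4^{s+1}s!\,t = 16t$ while the $(s-1)n$ term vanishes. Your caution about whether the cited theorem covers the degenerate case $s=1$ is well placed, and your elementary fallback---that $K_{1,t}$-minor-freeness forces every vertex to have degree at most $t-1$, so $|E(G)| = \sum_{v\in A}\deg(v) \le (t-1)|A| \le (t-1)c\sqrt{n} < 16tc\sqrt{n}$---closes that point cleanly and even gives the sharper constant $t-1$ in place of $16t$; the factor $16$ in the stated bound is evidently retained only so that the constant matches $C = 4^{s+1}s!\,t$ used in the other applications of Theorem~\ref{thm:bipartite_edge_bound} throughout the paper.
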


As a first step towards proving Theorem \ref{thm:main}, we want to show that $G$ must contain $K_{s-1, n-s+1}$ as a subgraph. We recall the result of Tait~\cite{Tait2019} on the maximum spectral radius of $K_{s,t}$-minor-free  graphs.

\begin{theorem}\cite{Tait2019}\label{thm:taitspecrad}
Let $t \ge s\ge 2$ and let $G$ be a graph of order $n$ with no $K_{s,t}$ minor. For sufficiently large $n$, the spectral radius $\lambda_1(G)$ satisfies
$$\lambda_1(G)\leq \frac{s+t-3+\sqrt{(t-s+1)^2+4(s-1)(n-s+1)}}{2},$$
with equality if and only if $n\equiv s-1\pmod{t}$ and $G=K_{s-1}\vee \lfloor n/t\rfloor K_t$.
\end{theorem}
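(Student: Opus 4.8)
The plan is to first pin down the candidate extremal graph and verify it meets the bound, and then prove the matching upper bound for an arbitrary maximizer, reading off the equality case along the way.

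\textbf{Step 0 (sharpness).} When $n\equiv s-1\pmod t$, write $n-s+1=mt$ and set $Y_n:=K_{s-1}\vee mK_t$, a graph on exactly $n$ vertices. First I would check $Y_n$ is $K_{s,t}$-minor-free: in any family of $s+t$ disjoint connected branch sets realizing a $K_{s,t}$-minor, at most $s-1$ branch sets meet the apex clique $K_{s-1}$; since every apex-free branch set lies inside a single $K_t$-component and an edge joining two apex-free branch sets stays inside one component, a short count on the two sides (letting $p\ge 1$ be the number of apex-free $s$-side sets and $q$ the number of apex-free-complement, i.e. apex-containing, $t$-side sets, one gets simultaneously $q\le p-1$ and $p\le q$) gives a contradiction. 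Then the bipartition $\{V(K_{s-1}),V(mK_t)\}$ is equitable with quotient matrix $\left(\begin{smallmatrix} s-2 & n-s+1\\ s-1 & t-1\end{smallmatrix}\right)$, whose characteristic polynomial is $\lambda^2-(s+t-3)\lambda+(s-2)(t-1)-(s-1)(n-s+1)$; since $Y_n$ is connected, $\lambda_1(Y_n)$ is the larger root, which equals the stated bound $\lambda^\ast:=\tfrac12\big(s+t-3+\sqrt{(t-s+1)^2+4(s-1)(n-s+1)}\big)$ after using the identity $(s+t-3)^2-4(s-2)(t-1)=(t-s+1)^2$. Hence it remains to prove $\lambda_1(G)\le\lambda^\ast$ for all $n$-vertex $K_{s,t}$-minor-free $G$, with equality only for $Y_n$.

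\textbf{Step 1 (a near-universal core).} Let $G$ be $K_{s,t}$-minor-free on $n$ vertices with $\lambda:=\lambda_1(G)$ maximum; I may assume $G$ connected (otherwise attach pendant vertices to a largest component, which strictly raises $\lambda_1$ without creating a $K_{s,t}$-minor) and $\lambda\ge\lambda^\ast$, so $(s-1)(n-s+1)\le\lambda^2\le 2|E(G)|\le 2C_0n$ by Corollary~\ref{cor:Mader}, whence $\lambda=\Theta(\sqrt n)$. Let $x$ be the Perron eigenvector, scaled so $x_z=\max_v x_v=1$. The first — and what I expect to be the main obstacle — is to show that there is a set $W$ of exactly $s-1$ vertices, each of degree $n-O(\sqrt n)$, with $x_v=o(1)$ for every $v\notin W$. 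This is the sort of statement established by the standard iterated eigenvector estimates (as in \cite{Tait2019, ZL2022}): starting from $\lambda^2x_z=d(z)+\sum_{v\ne z}|N(z)\cap N(v)|x_v$ and bounding the walk-of-length-two contributions using Corollary~\ref{cor:Mader} (for edges inside a neighborhood) and Theorem~\ref{thm:bipartite_edge_bound} (for edges leaving a neighborhood), one localizes essentially all eigenvector mass onto a bounded set and then bootstraps its vertices up to degree $n-O(\sqrt n)$. Its size is at most $s-1$ because $s$ vertices of degree $(1-o(1))n$ have $(1-o(1))n$ common neighbors and thus span a $K_{s,t}$ subgraph, and at least $s-1$ because otherwise the walk identity yields $\lambda^2\le(s-2)n+o(n)<(s-1)(n-s+1)$.

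\textbf{Step 2 (from near-universal to universal, and the structure of $G-W$).} Next I would promote ``$W$ near-universal'' to ``$W$ universal to $R:=V(G)\setminus W$'': if some $w\in W$ missed $u\notin W$, then $G+wu$ is connected with strictly larger spectral radius, so by maximality $G+wu$ contains a $K_{s,t}$-minor using $wu$, and an exchange argument (the other $s-2$ vertices of $W$ are near-universal and can be rerouted to bridge the branch sets joined by $wu$) would produce a $K_{s,t}$-minor already in $G$ — a contradiction. With $W$ universal, $|R|=n-s+1$, and since a $K_{1,t}$-minor of $G[R]$ together with the $s-1$ vertices of $W$ (as singleton $s$-side branch sets) is a $K_{s,t}$-minor of $G$, more generally $G[R]$ has no $K_{s-a,t-b}$-minor for any $a,b\ge0$ with $a+b\le s-1$; equivalently no connected bipartite-complete minor on $\ge t+1$ vertices whose smaller part has $\le s$ vertices. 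From this I would deduce $\lambda_1(G[R])\le t-1$ — the second place I expect real work — by showing a component $C$ of $G[R]$ with $\lambda_1(C)>t-1$ is too dense (has $\ge t+1$ vertices and enough edges) to avoid all these minors; the equality analysis then pins each component of $G[R]$ down to be exactly $K_t$ (this is where the full forbidden family, not just $K_{1,t}$, is used, e.g.\ to kill $(t-1)$-regular non-cliques).

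\textbf{Step 3 (the eigenvalue optimization and conclusion).} With $W$ universal, put $\sigma:=\sum_{w\in W}x_w>0$. For $v\in R$ the eigen-equation reads $(\lambda I-A(G[R]))\,x|_R=\sigma\mathbf{1}$, and $\lambda>\lambda_1(G[R])$ (proper induced subgraph of the connected $G$), so $\lambda I-A(G[R])$ is positive definite and $\sum_{v\in R}x_v=\sigma\,g(\lambda)$ where, with $(\mu_i,\phi_i)$ the eigenpairs of $G[R]$, $g(\lambda):=\mathbf{1}^{\top}(\lambda I-A(G[R]))^{-1}\mathbf{1}=\sum_i\frac{(\mathbf{1}^{\top}\phi_i)^2}{\lambda-\mu_i}\le\frac{\|\mathbf{1}\|^2}{\lambda-\lambda_1(G[R])}\le\frac{n-s+1}{\lambda-(t-1)}$ by Step 2. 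Summing the eigen-equation over $w\in W$ gives $\lambda\sigma=\sum_{w\in W}d_{G[W]}(w)x_w+(s-1)\sum_{v\in R}x_v\le(s-2)\sigma+(s-1)\sigma\,g(\lambda)$, hence $(\lambda-(s-2))(\lambda-(t-1))\le(s-1)(n-s+1)$; since the left side is increasing in $\lambda$ for $\lambda>\tfrac12(s+t-3)$ and equals $(s-1)(n-s+1)$ at $\lambda=\lambda^\ast$, we get $\lambda\le\lambda^\ast$. Equality forces $d_{G[W]}(w)=s-2$ for all $w$ (so $G[W]=K_{s-1}$), forces $\mathbf{1}$ into the $(t-1)$-eigenspace of $A(G[R])$ with $\lambda_1(G[R])=t-1$ (so $A(G[R])\mathbf{1}=(t-1)\mathbf{1}$, i.e.\ $G[R]$ is $(t-1)$-regular, in particular no isolated vertices), and then the minor constraint of Step 2 forces every component of $G[R]$ to be $K_t$; therefore $t\mid n-s+1$, i.e.\ $n\equiv s-1\pmod t$, and $G=K_{s-1}\vee\lfloor n/t\rfloor K_t=Y_n$, as claimed.
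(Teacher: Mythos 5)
This statement is Tait's theorem, which the paper cites from \cite{Tait2019} without reproducing a proof, so there is no in-paper argument to compare against; what you have written is an independent reconstruction. Your Step~0 is correct and complete: the equitable partition gives exactly the claimed quotient characteristic polynomial, the identity $(s+t-3)^2-4(s-2)(t-1)=(t-s+1)^2$ checks out, and the branch-set counting does show $K_{s-1}\vee mK_t$ is $K_{s,t}$-minor-free. Step~3 is also essentially correct and is a clean way to finish once universality and $\lambda_1(G[R])\le t-1$ are in hand: from $(\lambda I-A(G[R]))x|_R=\sigma\mathbf 1$ and the spectral-decomposition bound on $g(\lambda)$ you get $(\lambda-(s-2))(\lambda-(t-1))\le (s-1)(n-s+1)$, and monotonicity of the left side above $(s+t-3)/2$ closes the inequality, with equality forcing $G[W]=K_{s-1}$ and $A(G[R])\mathbf 1=(t-1)\mathbf 1$.

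The gaps are where you flag them. Step~1 is the technical core of Tait's argument (the iterated second-neighborhood eigenvector estimates that isolate exactly $s-1$ heavy vertices and push their degrees to $n-O(\sqrt n)$); as written it is a pointer to the literature, not a proof, and the subsequent ``exchange argument'' in Step~2 for upgrading near-universality to universality is genuinely delicate — the branch set split by deleting $wu$ could be a singleton $\{u\}$, in which case rerouting through the other vertices of $W$ needs a more careful argument than the one sketched. In Step~2 the blanket claim ``$G[R]$ has no $K_{s-a,t-b}$-minor for all $a,b\ge 0$ with $a+b\le s-1$'' is too strong as stated: when $a\ge1$ and $b\ge1$ you place $W$-singletons on both sides, and those need to be mutually adjacent, which requires $G[W]$ complete. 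It happens to be available at the point you invoke it (after equality has forced $G[W]=K_{s-1}$), but the claim should be stated conditionally. Finally, the assertion that the minor constraints ``pin each component of $G[R]$ down to be exactly $K_t$'' is not carried out; you correctly observe that $K_{1,t}$-minor-freeness alone is insufficient (for $t=3$, a $2$-regular $C_4$ is $K_{1,3}$-minor-free), and one really must use the join with $L$ to rule out $(t-1)$-regular non-cliques — this is the detailed case analysis that appears in a related claim in Section~3 of the paper, and it is not reducible to a one-line remark. So the outline is plausible and Steps~0 and~3 are solid, but Steps~1 and~2 contain real work that is currently deferred.
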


We first give some upper and lower bounds on $\lambda_1(G)$ and $|\lambda_n(G)|$ when $n$ is sufficiently large. We use known expressions for the eigenvalues of a join of two regular graphs~\cite[pg.19]{BH2012}.

\begin{lemma}\cite{BH2012}\label{lem:joinreglemma}
Let $G$ and $H$ be regular graphs with degrees $k$ and $\ell$ respectively. Suppose that $|V(G)| = m$ and $|V(H)| = n$. Then, the characteristic polynomial of $G\vee H$ is $p_{G\vee H}(t) = ((t-k)(t-\ell)-mn)\frac{p_G(t)p_H(t)}{(t-k)(t-\ell)}$. In particular, if the eigenvalues of $G$ are $k = \lambda_1 \ge \ldots \ge \lambda_m$ and the eigenvalues of $H$ are $\ell = \mu_1 \ge \ldots \geq \mu_n$, then the eigenvalues of $G\vee H$ are $\{\lambda_i: 2\le i\le m\} \cup \{\mu_j: 2\le j\le n\} \cup \{x: (x-k)(x-\ell)-mn = 0\}$. 
\end{lemma}

We will apply Lemma~\ref{lem:joinreglemma} to the graph  $(s-1)K_1\vee qK_t$ where $q=\lfloor (n-s+1)/t\rfloor$. Let $a_0 = (s-1)(n-s+1)$.

\begin{lemma}\label{lem:lambdan}
We have
\begin{equation}\label{lem:lambda}
\sqrt{a_0} - \frac{s+t-3}{2}-O\left(\frac{1}{\sqrt{n}}\right) \le |\lambda_n| \le \lambda_1 \le \sqrt{a_0} +\frac{s+ t - 3}{2}+O\left(\frac{1}{\sqrt{n}}\right).
\end{equation}
\end{lemma}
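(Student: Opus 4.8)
The plan is to establish the three inequalities of \eqref{lem:lambda} separately. The upper bound on $\lambda_1$ is immediate from Tait's estimate: since $G$ is $K_{s,t}$-minor-free and $n$ is large, Theorem~\ref{thm:taitspecrad} gives
$$\lambda_1(G)\le \frac{s+t-3}{2}+\frac{\sqrt{(t-s+1)^2+4a_0}}{2}=\frac{s+t-3}{2}+\sqrt{a_0}\,\sqrt{1+\frac{(t-s+1)^2}{4a_0}},$$
and since $(t-s+1)^2/(4a_0)=O(1/n)$ the square root is $\sqrt{a_0}+O(1/\sqrt n)$, which is the claimed bound. The middle inequality $|\lambda_n|\le \lambda_1$ holds because $A(G)$ is a nonnegative symmetric matrix, so its spectral radius equals $\lambda_1$.

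For the lower bound on $|\lambda_n|$ I would exploit the extremality of $G$ through a test graph. Let $q=\lfloor (n-s+1)/t\rfloor$ and set $G_0:=(s-1)K_1\vee qK_t$, a graph on $(s-1)+qt\le n$ vertices. Then $G_0$ is a subgraph of $K_{s-1}\vee\lfloor n/t\rfloor K_t$ (as $q\le\lfloor n/t\rfloor$), which is $K_{s,t}$-minor-free by Theorem~\ref{thm:taitspecrad}; padding $G_0$ with $n-(s-1)-qt$ isolated vertices yields an $n$-vertex $K_{s,t}$-minor-free graph $\widetilde G_0$ with $S(\widetilde G_0)=S(G_0)$ (the additional eigenvalue $0$ lies between $\lambda_{\min}(G_0)\le -1$ and $\lambda_{\max}(G_0)>0$). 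Hence, by extremality of $G$, we get $S(G)\ge S(G_0)$.

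It remains to compute $S(G_0)$. Applying Lemma~\ref{lem:joinreglemma} to the join of the $0$-regular graph $(s-1)K_1$ (on $s-1$ vertices, all eigenvalues $0$) and the $(t-1)$-regular graph $qK_t$ (on $qt$ vertices, eigenvalues $t-1$ with multiplicity $q$ and $-1$ with multiplicity $q(t-1)$), the spectrum of $G_0$ consists of $0$ (multiplicity $s-2$), $t-1$ (multiplicity $q-1$), $-1$ (multiplicity $q(t-1)$), together with the two roots of $x^2-(t-1)x-(s-1)qt=0$, namely $x_{\pm}=\tfrac12\big((t-1)\pm\sqrt{(t-1)^2+4(s-1)qt}\big)$. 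Since $\sqrt{(t-1)^2+4(s-1)qt}$ has order $\sqrt n$, for $n$ large we have $x_+>t-1$ and $x_-<-1$, so $x_+$ and $x_-$ are respectively the largest and smallest eigenvalues of $G_0$ and $S(G_0)=x_+-x_-=\sqrt{(t-1)^2+4(s-1)qt}$. Writing $qt=(n-s+1)-r$ with $0\le r\le t-1$ gives $(t-1)^2+4(s-1)qt=4a_0+O(1)$, hence $S(G_0)=2\sqrt{a_0}-O(1/\sqrt n)$. Combining, $|\lambda_n|\ge -\lambda_n=S(G)-\lambda_1\ge S(G_0)-\lambda_1\ge 2\sqrt{a_0}-O(1/\sqrt n)-\big(\sqrt{a_0}+\tfrac{s+t-3}{2}+O(1/\sqrt n)\big)=\sqrt{a_0}-\tfrac{s+t-3}{2}-O(1/\sqrt n)$, as required.

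The argument is essentially bookkeeping, and I do not expect a serious obstacle; the one point requiring a little care is verifying that among the eigenvalues produced by Lemma~\ref{lem:joinreglemma} it is the two roots of the quadratic — and not $t-1$ or $-1$ — that are extremal, so that $S(G_0)$ really equals their difference. This is exactly where the hypothesis that $n$ (hence $(s-1)qt$) is large enters. A secondary, minor point is to record that the test graph is $K_{s,t}$-minor-free, which I would obtain for free by embedding it into the extremal graph of Theorem~\ref{thm:taitspecrad} rather than arguing from scratch.
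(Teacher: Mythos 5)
Your proof is correct and follows the same approach as the paper: the upper bound on $\lambda_1$ comes from Tait's spectral-radius theorem, and the lower bound on $|\lambda_n|$ comes from computing $S((s-1)K_1\vee qK_t)$ via Lemma~\ref{lem:joinreglemma} and comparing with $S(G)$. Your treatment of the comparison step (padding with isolated vertices to get an $n$-vertex $K_{s,t}$-minor-free graph, then invoking extremality of $G$ and noting the padded $0$-eigenvalues lie strictly between the extremes) is a more explicit write-up of the step the paper dispatches with a terse appeal to ``the eigenvalue interlacing theorem,'' but it is the same argument.
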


\begin{proof}
The upper bound of $\lambda_1$ is due to Theorem \ref{thm:taitspecrad}. Now let us prove the lower bound.

By Lemma~\ref{lem:joinreglemma}, for sufficiently large $n$, $\lambda_1((s-1)K_1\vee qK_t)$ and $\lambda_n((s-1)K_1\vee qK_t)$ are the roots of the equation
$$\lambda(\lambda-(t-1))-(s-1)qt=0.$$
Thus, we have
\begin{align*}
    \lambda_1((s-1)K_1\vee qK_t) &= \frac{t-1+\sqrt{(t-1)^2+4(s-1)qt}}{2},\\
    \lambda_n((s-1)K_1\vee qK_t)&= \frac{t-1 - \sqrt{(t-1)^2+4(s-1)qt}}{2}.
\end{align*}
Thus $S((s-1)K_1\vee qK_t)=\sqrt{(t-1)^2+4(s-1)qt}$. Let $q=\lfloor (n-s+1)/t\rfloor$.
By the eigenvalue interlacing theorem, we then have
$$S(G)\geq \sqrt{(t-1)^2+4(s-1)qt}\geq \sqrt{4(s-1)(n-s+1)+(t-1)^2-4(s-1)(t-1)}=2\sqrt{a_0}+O\left(\frac{1}{\sqrt{n}}\right).
$$

Therefore, 
\begin{align*}
|\lambda_n(G)| &= S(G)-\lambda_1(G) \\
&\geq 2\sqrt{a_0}+O\left(\frac{1}{\sqrt{n}}\right) - \left(\sqrt{a_0} +\frac{s+t-3}{2}+O\left(\frac{1}{\sqrt{n}}\right) \right)\\ &=\sqrt{a_0} - \frac{s+t-3}{2}-O\left(\frac{1}{\sqrt{n}}\right).
\end{align*}
\end{proof}

For the rest of this paper, let ${\bf x}$ and ${\bf z}$ be the eigenvectors of $A(G)$ corresponding to the eigenvalues $\lambda_1$ and $\lambda_n$ respectively. For convenience, let ${\bf x}$ and ${\bf z}$ be indexed by the vertices of $G$. By the Perron-Frobenius theorem, we may assume that all entries of ${\bf x}$ are positive. We also assume that $\cx$ and $\cz$ are normalized so that the maximum absolute values of the entries of $\cx$ and $\cz$ are equal to $1$, and so
there are vertices $u_0$ and $w_0$ with ${\bf x}_{u_0} = |\cz_{w_0}| = 1$. 

Let $V_{+}=\{v\colon {\bf z}_v> 0\}$, $V_0=\{v\colon {\bf z}_v= 0\}$,
and $V_-=\{v\colon {\bf z}_v < 0\}$. 
Since $\cz$ is a non-zero vector, at least one of $V_{+}$ and $V_{-}$ is non-empty. By considering the eigen-equations of $\lambda_n \sum_{v\in V_{+}}\cz_v$ or $\lambda_n \sum_{v\in V_{-}}\cz_v$, we obtain that both $V_{+}$ and $V_{-}$ are non-empty.
For any vertex subset $S$, we define the \textit{volume} of $S$, denoted by $\vol(S)$, as 
$\vol(S)= \sum_{v\in S} |\cz_v|$. 
In the following lemmas, we use the bounds of $\lambda_n$ to deduce some information on $V_{+}$, $V_{-}$ and $V_0$.

\begin{lemma}
We have
$$\vol(V(G))=O(\sqrt{n}).$$
\end{lemma}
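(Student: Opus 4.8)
I want to bound $\vol(V(G)) = \sum_{v} |\cz_v|$ using the eigen-equation for $\cz$ at every vertex. For each vertex $v$, the eigen-equation gives $\lambda_n \cz_v = \sum_{u \sim v} \cz_u$, so $|\lambda_n| \, |\cz_v| \le \sum_{u \sim v} |\cz_u|$. Summing over all $v$ yields $|\lambda_n| \vol(V(G)) \le \sum_v \sum_{u\sim v} |\cz_u| = \sum_u d_G(u) |\cz_u|$. Since $|\cz_u| \le 1$ for all $u$, and by Corollary~\ref{cor:Mader} the graph is sparse, $\sum_u d_G(u) = 2|E(G)| \le 2C_0 n$. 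This gives $|\lambda_n| \vol(V(G)) \le 2C_0 n$, i.e. $\vol(V(G)) \le 2C_0 n / |\lambda_n|$. By Lemma~\ref{lem:lambdan}, $|\lambda_n| \ge \sqrt{a_0} - O(1) = \Theta(\sqrt n)$, so $\vol(V(G)) = O(n/\sqrt n) = O(\sqrt n)$, as desired.

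\begin{proof}
For every vertex $v \in V(G)$, the eigen-equation $\lambda_n \cz_v = \sum_{u \in N(v)} \cz_u$ and the triangle inequality give
$$|\lambda_n|\,|\cz_v| \le \sum_{u \in N(v)} |\cz_u|.$$
Summing this inequality over all $v \in V(G)$ and switching the order of summation,
$$|\lambda_n|\,\vol(V(G)) = |\lambda_n| \sum_{v \in V(G)} |\cz_v| \le \sum_{v \in V(G)} \sum_{u \in N(v)} |\cz_u| = \sum_{u \in V(G)} d_G(u)\,|\cz_u|.$$
Since $\cz$ is normalized so that $|\cz_u| \le 1$ for all $u$, we have $\sum_{u} d_G(u)|\cz_u| \le \sum_u d_G(u) = 2|E(G)|$. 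By Corollary~\ref{cor:Mader}, $|E(G)| \le C_0 n$, so
$$|\lambda_n|\,\vol(V(G)) \le 2C_0 n.$$
By Lemma~\ref{lem:lambdan}, for $n$ sufficiently large $|\lambda_n| \ge \sqrt{a_0} - \frac{s+t-3}{2} - O(1/\sqrt n) \ge \frac{1}{2}\sqrt{a_0} = \frac{1}{2}\sqrt{(s-1)(n-s+1)}$, which is $\Theta(\sqrt n)$. Hence
$$\vol(V(G)) \le \frac{2C_0 n}{|\lambda_n|} = O\!\left(\frac{n}{\sqrt n}\right) = O(\sqrt n). \qedhere$$
\end{proof}

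This argument is essentially routine; the only input beyond the eigen-equation is the linear edge bound of Corollary~\ref{cor:Mader} together with the lower bound $|\lambda_n| = \Omega(\sqrt n)$ from Lemma~\ref{lem:lambdan}, both already established. The one point requiring a little care is that the normalization of $\cz$ is by its $\ell_\infty$ norm (max absolute entry equals $1$), which is exactly what makes $|\cz_u| \le 1$ available; if instead $\cz$ were $\ell_2$-normalized one would have to argue slightly differently (e.g. via Cauchy--Schwarz), but as set up in the paper no such detour is needed.
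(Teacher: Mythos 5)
Your proof is correct and follows essentially the same route as the paper's: both bound $|\lambda_n|\,\vol(V(G))$ by $\sum_v d(v) = O(n)$ using the eigen-equation and the $\ell_\infty$-normalization of $\cz$, then divide by the lower bound $|\lambda_n| = \Omega(\sqrt n)$ from Lemma~\ref{lem:lambdan}. The only cosmetic difference is that the paper applies $|\cz_y|\le 1$ inside the per-vertex inequality to get $|\lambda_n||\cz_v| \le d(v)$ directly, while you sum first and then apply $|\cz_u|\le 1$; the two are algebraically identical.
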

\begin{proof}
For any vertex $v \in V(G)$, we have
$$d(v) \geq |\sum_{y\in N(v)}\cz_y|=|\lambda_n| |\cz_v|.$$
Applying Theorem \ref{thm:linear-edge-bound} and Corollary \ref{cor:Mader},
we have
$$|\lambda_n| \vol(V) = \sum_{v\in V(G)}  |\lambda_n| |\cz_v|
 \leq  \sum_{v\in V(G)}d(v) =O(n).$$
By Lemma \ref{lem:lambdan}, $|\lambda_n|\geq \sqrt{(n-s+1)} - \frac{s+t-3}{2}-O\left(\frac{1}{\sqrt{n}}\right)$.
We thus have $\vol(V)=O(\sqrt{n})$.
\end{proof}

Without loss of generality, we assume $|V_+|\leq \frac{n}{2}$. 
\begin{lemma}
We have 
    \[\vol(V_+)=O(1).\]
\end{lemma}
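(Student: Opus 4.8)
The plan is to exploit the eigen-equation for $\lambda_n$ restricted to $V_+$, together with the structural fact (established in the previous lemma) that $\vol(V(G)) = O(\sqrt n)$, and the $K_{s,t}$-minor-free hypothesis to control how many edges can leave $V_+$. Summing the eigen-equation $\lambda_n \cz_v = \sum_{y \sim v} \cz_y$ over $v \in V_+$ gives
\[
|\lambda_n|\,\vol(V_+) = -\lambda_n \sum_{v\in V_+}\cz_v = -\sum_{v\in V_+}\sum_{y\sim v}\cz_y = -\sum_{v \in V_+}\Bigl(\sum_{\substack{y\sim v\\ y\in V_+}}\cz_y + \sum_{\substack{y\sim v\\ y\in V_-}}\cz_y\Bigr),
\]
since edges to $V_0$ contribute nothing. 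The $V_+$–$V_+$ term is $-2\sum_{vy\in E(V_+)}\cz_v\cz_y \le 0$, so it only helps; the whole right-hand side is at most $\sum_{v\in V_+}\sum_{y\sim v,\,y\in V_-}|\cz_y|$, i.e. bounded by $\sum_{y\in V_-}|\cz_y|\cdot d_{V_+}(y)$ where $d_{V_+}(y)$ counts neighbors of $y$ in $V_+$. The crude bound $d_{V_+}(y)\le |V_+|\le n/2$ is useless, so the point is to show the \emph{total} number of edges between $V_+$ and $V(G)\setminus V_+$ that matters is small, by using that $|V_+|$ is "small" — but here $|V_+|$ is only known to be $\le n/2$, not $O(\sqrt n)$, so this is where the real work lies.

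The key idea I would pursue: first show $|V_+| = O(\sqrt n)$. Since $\vol(V(G)) = O(\sqrt n)$ and every entry $\cz_v$ with $v\in V_+$ satisfies $|\lambda_n||\cz_v| = |\sum_{y\sim v}\cz_y| \le d(v)$, a vertex $v$ with small degree need not have small $\cz_v$, so that alone does not bound $|V_+|$. Instead I would argue that vertices of $V_+$ with $|\cz_v|$ bounded below by a constant are $O(1)$ in number (immediate from $\vol = O(\sqrt n)$ is too weak — we need $O(1)$, so use that $\sum_{v}|\cz_v|^2$ behaves well, or directly bound the number of "large" coordinates). Actually the cleanest route: apply the eigen-equation at the vertex $w_0$ where $|\cz_{w_0}| = 1$; if $w_0 \in V_+$ then $|\lambda_n| = |\sum_{y \sim w_0}\cz_y| \le \vol(V(G)) = O(\sqrt n)$, contradicting $|\lambda_n| = \Theta(\sqrt n)$ only if the implied constant is too large — so more care is needed, and one shows instead that the neighborhood of any vertex has volume $\Omega(\sqrt n)$ forced onto the opposite side. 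This suggests the right statement is: the set $V_+$ sends only $O(\sqrt n)$ edges to $V_-$, because otherwise one extracts a large bipartite subgraph and applies Corollary~\ref{cor:star-minor} or Theorem~\ref{thm:bipartite_edge_bound} to produce a $K_{s,t}$-minor.

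Concretely, the steps in order: (1) record $|\lambda_n|\,\vol(V_+) \le e(V_+, V_-)\cdot \max_{y}|\cz_y| \le e(V_+,V_-)$ after dropping the nonpositive $V_+$–$V_+$ term and bounding $|\cz_y|\le 1$; (2) bound $e(V_+, V(G)) = O(n)$ trivially from Corollary~\ref{cor:Mader}, but then upgrade: since $G$ is $K_{s,t}$-minor-free and $|V_+| \le n/2$, use that the bipartite graph between $V_+$ and the rest cannot be too dense relative to $\min(|V_+|, n - |V_+|)$ — if $|V_+| = \omega(1)$ one still only gets $e(V_+, \cdot) = O(n)$, so (3) the decisive bound must come from $\vol$: write $e(V_+, V_-) \le \sum_{v\in V_+} d(v)$ and pair this with the lower bound $|\lambda_n| \ge \sqrt{n-s+1} - O(1)$ to get $\vol(V_+) = O(\sqrt n)$ only — not good enough. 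So (4) iterate: having $\vol(V_+) = O(\sqrt n)$, re-examine (1): the bound is $|\lambda_n|\vol(V_+) \le \sum_{y \in V_- } |\cz_y| d_{V_+}(y) \le \vol(V_-)\cdot \Delta_{V_+}$ where $\Delta_{V_+} = \max_{y\in V_-} d_{V_+}(y)$; and $\vol(V_-) = O(\sqrt n)$, so we need $\Delta_{V_+} = O(1)$, which follows because a vertex adjacent to many vertices of $V_+$, each carrying (via the eigen-equation and $\vol(V_+)=O(\sqrt n)$) a coordinate that is not too small... This circularity is exactly the obstacle.

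The \textbf{main obstacle}, then, is breaking the circularity between "$\vol(V_+)$ small" and "$V_+$ structurally small": the honest argument almost certainly needs an auxiliary counting step showing that, for the maximum-spread graph $G$, the set $V_+$ induces together with its neighborhood a bipartite graph so sparse (via Corollary~\ref{cor:star-minor}, using $|V_+| = O(\sqrt n)$ once that is known, or via a direct minor-extraction argument) that $e(V_+, V\setminus V_+) = O(\sqrt n)$, whence step (1) gives $|\lambda_n|\vol(V_+) = O(\sqrt n)$ and dividing by $|\lambda_n| = \Theta(\sqrt n)$ yields $\vol(V_+) = O(1)$. I would first nail down $|V_+| = O(\sqrt n)$ (probably from $\vol(V(G)) = O(\sqrt n)$ plus a lower bound $|\cz_v| \ge c/\sqrt n$ for $v \in V_+$ forced by the eigen-equation and the fact that such a $v$ has a neighbor of large $|\cz|$), then feed $|V_+| = O(\sqrt n)$ into Corollary~\ref{cor:star-minor}-type bounds to control $e(V_+, \cdot)$, and finally close via the eigen-equation sum above.
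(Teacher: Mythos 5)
Your proposal is honestly incomplete---you correctly identify the circularity and say so, but never produce a way to break it, so this does not amount to a proof. Let me say why the specific escape routes you float do not work, and how the paper actually closes the gap.

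You want to first establish $|V_+| = O(\sqrt n)$, and suggest deriving it from $\vol(V(G)) = O(\sqrt n)$ together with a pointwise lower bound $|\cz_v| \ge c/\sqrt{n}$ on $V_+$. No such pointwise lower bound is available: vertices of $V_+$ can have arbitrarily small eigenvector entries (indeed, in the extremal graph the isolated $P_1$ vertices in $R$ lie in $V_-$ or $V_0$, and nothing a priori prevents some low-degree vertex from sitting just barely on the positive side). Moreover, the paper's logical order is the reverse of yours: $|V_+| = O(\sqrt n)$ is Lemma~\ref{lem:Vsize}, proved \emph{after} and \emph{using} $\vol(V_+) = O(1)$ (through Corollary~\ref{cor:w0}). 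So you would be proving the harder statement first, and you would still need an independent argument for it. Your other route---bounding $e(V_+, V_-)$ and plugging into $|\lambda_n|\vol(V_+) \le e(V_+, V_-)$---you correctly observe only gives $\vol(V_+) = O(\sqrt n)$ because $e(V_+, V_-) = O(n)$ is the best one can say at this stage.

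The actual mechanism the paper uses, and that you are missing, has two ingredients. First, split $V_+$ itself into a constant-size part $L = \{v \in V_+ : |N(v)\cap V_-| \ge \epsilon n\}$ and the rest $S$. Theorem~\ref{thm:bipartite_edge_bound} forces $|L| \le C/\epsilon = O(1)$, so $\vol(L) = O(1)$ trivially. Second, and crucially, apply the eigen-equation \emph{twice} to get $\lambda_n^2 \vol(S) \le \sum_{y\in V_+}\cz_y\,|E(S, N(y)\cap V_-)|$ rather than the one-step version you wrote. Now, for $y \in S$ one has $|E(S,N(y))| \le (s-1)|S| + C\epsilon n$ by Theorem~\ref{thm:bipartite_edge_bound} applied to the bipartite graph between $S$ and $N(y)$, and here is where $|V_+| \le n/2$ enters: it caps this at $(s-1)n/2 + C\epsilon n$, which---after choosing $\epsilon$ small---is strictly below $\lambda_n^2 \approx (s-1)n$ by a margin of order $n$. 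That margin lets you move the $\vol(S)$ term to the left-hand side and conclude $\vol(S) = O(\vol(L)) = O(1)$. The one-step eigen-equation cannot see this because $\lambda_n \approx \sqrt{(s-1)n}$ is much smaller than the degree bound $O(n)$; the two-step version compares $\lambda_n^2$ against an edge count, and those are of the same order, so the $|V_+| \le n/2$ hypothesis produces a genuine gain. This decomposition-plus-squaring is the missing idea; without it, the argument indeed loops.
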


\begin{proof}
    Let $\epsilon>0$ be a small constant depending on $s$ and $t$ to be chosen later. 
Define two sets $L$ and $S$ as follows: 
\[L=\{v\in V_+\colon |N(v)\cap V_-|\geq \epsilon n\},\]
and $S=V_+\setminus L$. Let $C=4^{s+1}s!t$. By Theorem \ref{thm:bipartite_edge_bound}, we have
\begin{equation}\label{eq:Lbound}
    |L|\leq \frac{E(L,V_-)}{\epsilon n} \leq \frac{Cn}{\epsilon n}=\frac{C}{\epsilon}.
\end{equation}

We then have that
\begin{align}
\lambda_n^2 \vol(S) &= \lambda_n^2\sum_{v\in S} \cz_v \nonumber\\
&=\lambda_n\sum_{v\in S} \sum_{u\in N(v)} \cz_u \nonumber\\
&\leq  \sum_{v\in S} \sum_{u\in N(v)\cap V_-} \lambda_n \cz_u \nonumber\\
&\leq  \sum_{v\in S} \sum_{u\in N(v)\cap V_-} \sum_{y\in V_+\cap N(u)} \cz_y \nonumber\\
&= \sum_{y\in V_+}\cz_y |E(S, N(y)\cap V_-)| \nonumber\\
&= \sum_{y\in L}\cz_y |E(S, N(y)\cap V_-)|+ \sum_{y\in S}\cz_y |E(S, N(y)\cap V_-)|. \label{eq:S0}
\end{align}
We apply the following estimation.
For $y\in L$, we have
\begin{equation}
\label{eq:S1}
  |E(S, N(y)\cap V_-)|\leq |E(S, V_-)|\leq Cn.
\end{equation}
For $y\in S$, by Theorem \ref{thm:bipartite_edge_bound}, we have
\begin{equation}\label{eq:S2}
    |E(S, N(y)\cap V_-)| \leq  (s-1)|S|+ C\epsilon n.
\end{equation}
Now we apply the assumption that $|V_+|\leq \frac{n}{2}$. We have
\begin{equation}\label{eq:S3}
    |E(S, N(y)\cap V_-)| \leq (s-1)\frac{n}{2}+ C\epsilon n.
\end{equation}
Plugging Equations \eqref{eq:S1} and \eqref{eq:S3} into Equation \eqref{eq:S0},
we get
\begin{equation} \label{eq:S4}
    \lambda_n^2 \vol(S) \leq \vol(L) Cn + \vol(S)\left((s-1)\frac{n}{2}+ C\epsilon n\right). 
\end{equation}
 By Lemma \ref{lem:lambdan}, $|\lambda_n|\geq \sqrt{(s-1)(n-s+1)} - \frac{s+t-3}{2}-O\left(\frac{1}{\sqrt{n}}\right)$.
Set $\epsilon=\frac{s-1}{6C}$.
  We have that for sufficiently large $n$,
\begin{equation}\label{eq:S5}
    \lambda_n^2 - \left((s-1)\frac{n}{2}+ C\epsilon n\right) >\frac{(s-1)n}{4}.
\end{equation}
Combining Equations \eqref{eq:S4} and \eqref{eq:S5} and solving $\vol(S)$, we get
\begin{equation}
    \vol(S)\leq \frac{4C}{s-1}\vol(L).
\end{equation}
This implies
\begin{align*}
    \vol(V_+) &\leq \left(1+ \frac{4C}{s-1}\right) \vol(L) \\
    &\leq \left(1+ \frac{4C}{s-1}\right)|L|\\
    &\leq \left(1+ \frac{4C}{s-1}\right)\frac{C}{\epsilon}\\
    &=O(1).
\end{align*}
At the last step, we apply Inequality \eqref{eq:Lbound}.
The proof of this lemma is thus finished.
\end{proof}

\begin{corollary}\label{cor:w0}
    For any $v\in V_-$, we have
    \[|\cz_v|=O\left( \frac{1}{\sqrt{n}}\right). \]
In particular, $w_0\in V_+$ and $|N(w_0)\cap V_-|=\Omega(n)$.
\end{corollary}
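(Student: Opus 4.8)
The plan is to run the eigen-equation for $\cz$, namely $\lambda_n\cz_v=\sum_{u\in N(v)}\cz_u$, against the two estimates already available: $|\lambda_n|=\Theta(\sqrt n)$ from Lemma~\ref{lem:lambdan}, and $\vol(V_+)=O(1)$ from the previous lemma. Before anything else I would record that $\lambda_n<0$: Lemma~\ref{lem:lambdan} gives $|\lambda_n|>0$ for $n$ large, and since $G$ has at least one edge its adjacency matrix has a negative eigenvalue, so $\lambda_n<0$.

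First I would bound $|\cz_v|$ for $v\in V_-$. Fixing such a $v$, we have $\cz_v<0$, hence $\lambda_n\cz_v=|\lambda_n|\,|\cz_v|>0$. Writing out the eigen-equation at $v$, splitting $N(v)$ into its intersections with $V_+$, $V_0$, $V_-$, and discarding the nonpositive $V_-$-sum (the $V_0$-sum vanishes), I get
$$|\lambda_n|\,|\cz_v| \;\le\; \sum_{u\in N(v)\cap V_+}\cz_u \;\le\; \vol(V_+) \;=\; O(1).$$
Dividing by $|\lambda_n|=\Theta(\sqrt n)$ yields $|\cz_v|=O(1/\sqrt n)$, which is the first claim. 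Since $w_0\notin V_0$ (as $|\cz_{w_0}|=1\neq 0$), this bound forces $w_0\notin V_-$ for $n$ large, so $w_0\in V_+$ and therefore $\cz_{w_0}=1$.

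For the last assertion I would apply the eigen-equation at $w_0$: $\lambda_n=\lambda_n\cz_{w_0}=\sum_{u\in N(w_0)}\cz_u$. The contribution of $N(w_0)\cap V_+$ is nonnegative and at most $\vol(V_+)=O(1)$, so $\sum_{u\in N(w_0)\cap V_-}\cz_u\le \lambda_n+O(1)=-\Theta(\sqrt n)$. Taking absolute values and using $|\cz_u|=O(1/\sqrt n)$ for every $u\in V_-$,
$$\Theta(\sqrt n) \;\le\; \Bigl|\sum_{u\in N(w_0)\cap V_-}\cz_u\Bigr| \;\le\; |N(w_0)\cap V_-|\cdot O\!\left(\frac{1}{\sqrt n}\right),$$
whence $|N(w_0)\cap V_-|=\Omega(n)$.

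This is a short argument and I do not foresee a genuine obstacle; the only delicate points are checking $\lambda_n<0$ and making sure that the normalization together with $w_0\in V_+$ forces $\cz_{w_0}=+1$ (rather than $-1$), so that the eigen-equation at $w_0$ genuinely delivers the large negative quantity $\lambda_n$ on the left-hand side.
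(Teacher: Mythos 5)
Your argument for the first part (the $O(1/\sqrt n)$ bound on $|\cz_v|$ for $v\in V_-$ and the deduction $w_0\in V_+$, $\cz_{w_0}=1$) is the same as the paper's. For the last claim you take a genuinely different and somewhat slicker route. The paper does not use the just-established pointwise bound on $V_-$ at all: it writes $\lambda_n^2\cz_{w_0}\le \lambda_n\sum_{v\in N(w_0)\cap V_-}\cz_v$, expands once more by the eigen-equation at each such $v$, regroups as $\sum_{y\in V_+}\cz_y\,|N(y)\cap N(w_0)\cap V_-|$, and bounds this by $|N(w_0)\cap V_-|\cdot\vol(V_+)$ — a two-step walk argument. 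You instead apply the eigen-equation at $w_0$ only once, use $\vol(V_+)=O(1)$ to isolate $\bigl|\sum_{u\in N(w_0)\cap V_-}\cz_u\bigr|=\Theta(\sqrt n)$, and then invoke the uniform bound $|\cz_u|=O(1/\sqrt n)$ on $V_-$ from the first part to conclude. Both are correct and yield the same estimate; your version is shorter and makes the corollary's two claims feed into each other, while the paper's version is self-contained in the sense that the second claim does not depend on the first. Your two flagged "delicate points" are handled fine: $\lambda_n<0$ follows since $G$ has an edge, and $w_0\in V_+$ together with $|\cz_{w_0}|=1$ forces $\cz_{w_0}=1$ by the definition of $V_+$.
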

\begin{proof}
    For any $v\in V_-$, we have
    \[|\lambda_n| |\cz_v|=\lambda_n \cz_v\leq \sum_{y \in N(v)\cap V_+} \cz_y\leq \vol(V_+) =O(1).\]
    This implies $\cz_v= O\left( \frac{1}{\sqrt{n}}\right).$
    In particular, we have $w_0\in V_+$. Thus $\cz_{w_0}=1$.
    We then obtain that
\begin{align*}
    \lambda_n^2 &= \lambda_n^2 \cz_{w_0}\\
    &\leq \lambda_n \sum_{v\in N(w_0)\cap V_-} \cz_v\\
    &\leq |N(w_0)\cap V_-| \cdot \cz_{w_0} + \sum_{y\in V_+\setminus \{w_0\}}\cz_y |N(y)\cap N(w_0)\cap V_-|\\
    &\leq |N(w_0)\cap V_-| \vol(V_+).
\end{align*}
Since $\vol(V_+)=O(1)$ and $\lambda_n^2\geq (s-1-o(1))n$, we have
$|N(w_0)\cap V_-|=\Omega(n)$.
\end{proof}

\begin{lemma}\label{lem:Vsize}
    We have $|V_-|\geq n- O(\sqrt{n})$ and $|V_+|=O(\sqrt{n})$.
\end{lemma}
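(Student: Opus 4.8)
The plan is to bootstrap from what we already know: $\vol(V_+)=O(1)$ and $\vol(V(G))=O(\sqrt n)$, plus the fact (Corollary~\ref{cor:w0}) that $w_0\in V_+$ with $|\cz_{w_0}|=1$ and $|N(w_0)\cap V_-|=\Omega(n)$. The key observation is that a vertex in $V_+$ with small $\cz$-value can still contribute many edges, while the eigen-equation forces the total ``weighted degree'' of $V_+$ into $V_-$ to be controlled. Concretely, I would first bound $|V_+|$ directly. Recall $L=\{v\in V_+\colon |N(v)\cap V_-|\ge \epsilon n\}$ has $|L|\le C/\epsilon=O(1)$ by \eqref{eq:Lbound}. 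For the remaining set $S=V_+\setminus L$, every vertex $v\in S$ has $|N(v)\cap V_-|<\epsilon n$, so these vertices have ``few'' neighbors in $V_-$; the point is to show there cannot be too many of them. The cleanest route: apply Corollary~\ref{cor:star-minor}. If $|V_+|$ were larger than $c\sqrt n$ for every constant $c$, then, since $G$ is $K_{s,t}$-minor-free hence certainly $K_{1,t'}$-minor-free for... no---that is false. Instead I use the edge count. By Theorem~\ref{thm:bipartite_edge_bound} applied to the bipartite graph between $V_+$ and $V_-$, we get $|E(V_+,V_-)|\le (s-1)|V_-| + C|V_+|\le (s-1)n + C|V_+|$ where $C=4^{s+1}s!t$. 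On the other hand, each vertex $v\in V_-$ satisfies $|\lambda_n|\,|\cz_v|=\lambda_n\cz_v\le \sum_{y\in N(v)\cap V_+}\cz_y$, but this only gives volume bounds, not a lower bound on $|V_+|$.

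So instead I would bound $|V_-|$ from below and deduce $|V_+|=O(\sqrt n)$. Here is the mechanism. From the eigen-equation at $w_0$, expanding $\lambda_n^2\cz_{w_0}$ as in the proof of Corollary~\ref{cor:w0} but keeping track of all of $V_+\cup V_0$:
\begin{align*}
\lambda_n^2 = \lambda_n^2\cz_{w_0} = \lambda_n\!\!\sum_{u\in N(w_0)}\!\!\cz_u \le \lambda_n\!\!\sum_{u\in N(w_0)\cap V_-}\!\!\cz_u \le \sum_{y\in V_+}\cz_y\,|N(y)\cap N(w_0)\cap V_-| \le \vol(V_+)\cdot \max_{y}|N(y)\cap V_-|.
\end{align*}
Since $\vol(V_+)=O(1)$ and $\lambda_n^2=(s-1-o(1))n$, there is a vertex — indeed $w_0$ works — with $|N(w_0)\cap V_-|=\Omega(n)$. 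Now I want to show \emph{most} vertices lie in $V_-$. Suppose $|V_0\cup V_+|\ge D\sqrt n$ for a large constant $D$; I will derive a contradiction. The set $V_0\cup V_+$ together with $V_-$ partitions $V(G)$. Consider the bipartite subgraph $H$ between $V_0\cup V_+$ and $V_-$. Every vertex $v\in V_0\cup V_+$ with $|N(v)\cap V_-|\ge \epsilon n$ lies in $L$ (if $v\in V_+$) or satisfies $\lambda_n\cz_v=0\le \sum_{N(v)\cap V_-}\cz_u<0$... careful, for $v\in V_0$ we get $0=\lambda_n\cz_v=\sum_{u\in N(v)}\cz_u$, so $\sum_{u\in N(v)\cap V_-}(-\cz_u)=\sum_{u\in N(v)\cap V_+}\cz_u\le\vol(V_+)=O(1)$; since each $|\cz_u|=O(1/\sqrt n)$ for $u\in V_-$ this is automatically satisfied and gives no restriction, so I cannot bound $|V_0|$ this way either.

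The honest approach, which I expect to be the main obstacle, is therefore to control $|V_0|+|V_+|$ via a \emph{second moment / edge-counting} argument combining the two eigenvectors. Here is the plan I would actually execute. Use that $\cx$ is the Perron vector with all entries positive and, by interlacing against $(s-1)K_1\vee qK_t$ and Lemma~\ref{lem:lambdan}, $\lambda_1=\sqrt{a_0}+O(1)$; a standard argument (as in Tait~\cite{Tait2019}) shows $\sum_v \cx_v^2 \cdot\lambda_1 = \sum_{uv\in E}2\cx_u\cx_v$ and that only $O(1)$ vertices have $\cx_v=\Omega(1)$ while all others have $\cx_v=O(1/\sqrt n)$, with $\sum_v\cx_v = \Theta(\sqrt n)$. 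Then partition $V(G)=A\cup B$ where $A$ is the $O(1)$-size set of ``heavy'' vertices (large $\cx$-value), which must dominate almost all of $V(G)$ because $G$ contains no $K_{s,t}$-minor and has $\Omega(n)$ edges concentrated at $A$ (Corollary~\ref{cor:Mader} and Theorem~\ref{thm:bipartite_edge_bound} force $|E(G\setminus A)|=O(\sqrt n \cdot n^{?})$...). Cleaner: show directly that all but $O(\sqrt n)$ vertices of $G$ have \emph{both} $\cz_v<0$ and $\cx_v=O(1/\sqrt n)$. Indeed, let $W=\{v: |N(v)\cap V_-|<\epsilon n\}\cap V_-$. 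Then the bipartite graph between $W$ and $V_+\cup V_0$... I'll instead count: $\epsilon n\,|V_-\setminus W'|\le |E(V_-,V_+\cup V_0)|\le (s-1)n+C|V_+\cup V_0|$, where $W'$ is the set of $V_-$-vertices with $\ge\epsilon n$ neighbors \emph{outside} $V_-$; this bounds $|W'|=O(1/\epsilon)=O(1)$. So all but $O(1)$ vertices of $V_-$ have almost all their neighbors inside $V_-$. Symmetrically $|L|=O(1)$. The remaining step is: if $|V_+\cup V_0|$ were $\omega(\sqrt n)$, then since these vertices (minus the $O(1)$ heavy ones in $L$) each send $<\epsilon n$ edges to $V_-$ and $G[V_+\cup V_0]$ is $K_{s,t}$-minor-free hence has $O(|V_+\cup V_0|)$ edges, the Rayleigh quotient $\cz^\top A\cz/\cz^\top\cz$ restricted appropriately would be too small in absolute value to equal $\lambda_n$, contradicting $\lambda_n^2=(s-1-o(1))n$; more precisely, $\lambda_n^2\vol(V_-)=\lambda_n\sum_{v\in V_-}\lambda_n\cz_v \le \lambda_n\sum_{v\in V_-}\sum_{u\in N(v)\cap V_+}\cz_u \le \sum_{u\in V_+}\cz_u|E(V_-,u)| \cdot(\dots)$ and tracking constants shows $\vol(V_-)$, and hence via $|\cz_v|=O(1/\sqrt n)$ on $V_-$ the count $|V_-|$, must be $\Theta(n)$, i.e. $|V_-|\ge n-O(\sqrt n)$, whence $|V_+|\le |V_+\cup V_0|\le O(\sqrt n)$. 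I expect the delicate point to be ruling out a large $V_0$: that requires observing that a vertex $v\in V_0$ adjacent to many vertices of $V_-$ forces, via $0=\sum_{u\in N(v)}\cz_u$, a compensating large positive contribution that the $O(1)$ bound on $\vol(V_+)$ forbids once $|N(v)\cap V_-|$ is large relative to the $O(1/\sqrt n)$ per-vertex weights — so $V_0$ vertices behave like $S$-vertices and the same Theorem~\ref{thm:bipartite_edge_bound} edge count caps $|V_0\setminus(\text{heavy})|$ only after we also incorporate that $G$ itself has $O(n)$ edges, pinning $|V(G)\setminus V_-|=O(\sqrt n)$.
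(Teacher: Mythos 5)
Your proposal does not close the argument. You explore several routes, acknowledge that each one stalls, and ultimately flag ``ruling out a large $V_0$'' as a delicate point that you never resolve. In particular, your last attempt --- estimating $\lambda_n^2\vol(V_-)$ by pushing the eigen-equation through $V_+$ --- only yields an \emph{upper} bound $\vol(V_-)=O(\sqrt n)$, which is already known from $\vol(V(G))=O(\sqrt n)$ and gives no lower bound on $|V_-|$. And the edge-counting observation $\epsilon n\,|W'|\le|E(V_-,V_+\cup V_0)|\le (s-1)n+C|V_+\cup V_0|$ bounds the number of $V_-$-vertices with many outside-neighbors by a constant, but says nothing about $|V_-|$ itself or about $|V_0|$.

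The idea you are missing is to apply the $\lambda_n^2$ eigen-equation expansion not to $\vol(V_+)$ (as in the previous lemma) or to $\vol(V_-)$, but to $\vol(L)$ with a \emph{new} threshold: redefine
\[
L=\{v\in V_+ : |N(v)\cap V_-|\ge C_1\sqrt n\},\qquad S=V_+\setminus L,
\]
and note that $|L|\le \frac{C}{C_1}\sqrt n$, $w_0\in L$ (by Corollary~\ref{cor:w0}), hence $\vol(L)\ge 1$, and $\vol(S)\le\vol(V_+)=O(1)$. Running the same chain of inequalities
\[
\lambda_n^2\vol(L)\le\sum_{y\in L}\cz_y|E(L,N(y)\cap V_-)|+\sum_{y\in S}\cz_y|E(L,N(y)\cap V_-)|
\]
and bounding each bracket via Theorem~\ref{thm:bipartite_edge_bound} (with $|E(L,V_-)|\le(s-1)|V_-|+C|L|$ for the first sum and $|E(L,N(y)\cap V_-)|\le(s-1)|L|+CC_1\sqrt n$ for the second, since $|N(y)\cap V_-|<C_1\sqrt n$ when $y\in S$), one solves for $|V_-|$ and gets $|V_-|\ge \lambda_n^2/(s-1) - O(\sqrt n)\ge n-O(\sqrt n)$ directly. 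Crucially, once $|V_-|\ge n-O(\sqrt n)$, you get $|V_+|\le|V_+\cup V_0|=n-|V_-|=O(\sqrt n)$ for free --- so $V_0$ never has to be analyzed at all. Your circling around $V_0$ signals that you were trying to bound $|V_+\cup V_0|$ from above rather than $|V_-|$ from below; only the latter route closes cleanly here.
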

\begin{proof}
We define $L$ now as follows. Let
\[L=\{v\in V_+\colon |N(v)\cap V_-|\geq C_1\sqrt{n}\},\]
where $C_1$ is some big constant chosen later. Let $S=V_+\setminus L$.
We have
\begin{equation}\label{eq:L'}
    |L|\leq\frac{E(L, V_-)}{C_1\sqrt{n}}\leq \frac{Cn}{C_1\sqrt{n}}=\frac{C}{C_1}\sqrt{n}.
\end{equation}
By Corollary \ref{cor:w0}, we have $w_0\in L$. In particular, $\vol(L)\geq 1$.

Similar to Inequality \eqref{eq:S0}, we have
\begin{align}
\lambda_n^2 \vol(L) 
&\leq \sum_{y\in V_+}\cz_y |E(L, N(y)\cap V_-)| \nonumber\\
&= \sum_{y\in L}\cz_y |E(L, N(y)\cap V_-)|+ \sum_{y\in S}\cz_y |E(L, N(y)\cap V_-)|. \label{eq:L0}
\end{align}

We apply the following estimation.
For $y\in L$, we have
\begin{equation}
\label{eq:L1}
  |E(L, N(y)\cap V_-)|\leq |E(L, V_-)|\leq (s-1)|V_-| + C|L|.
\end{equation}
For $y\in S$, we have
\begin{equation}\label{eq:L2}
    |E(L, N(y)\cap V_-)| \leq (s-1)|L|+ CC_1\sqrt{n}.
\end{equation}
Combining Equations \eqref{eq:L0}, \eqref{eq:L1}, and \eqref{eq:L2}, we get
\begin{equation}
    \lambda_n^2 \vol(L) \leq \vol(L) \left((s-1)|V_-| + C|L|\right)
    + \vol(S)\left( (s-1)|L|+ CC_1\sqrt{n}\right).
\end{equation}
Equivalently, we have
\begin{align*}
    |V_-|&\geq \frac{\lambda_n^2}{s-1} - C |L| -\frac{\vol(S)}{\vol(L)} \left( (s-1)|L|+ CC_1\sqrt{n}\right)\\
      &\geq n- C'\sqrt{n}
\end{align*}
for some sufficiently large constant $C'$. Here we apply Inequality \eqref{eq:L'} that $\vol(S)\leq \vol(V_+)=O(1)$ and
$\vol(L)\geq 1$.
Thus, we have
$|V_+|=O(\sqrt{n})$.
\end{proof}

\begin{lemma}\label{lem:degree_structure}
There exist some constant $C_2$ and $s-1$ vertices $u_1, \ldots, u_{s-1}$ satisfying
\begin{enumerate}[(i)]
    \item For any $1\leq i\leq s-1$, we have $d(u_i)\geq n- C_2\sqrt{n}$.
    \item For any vertex $v\not\in \{u_1, \ldots, u_{s-1}\}$, we have $d(v)\leq sC_2\sqrt{n}$.
\end{enumerate}
\end{lemma}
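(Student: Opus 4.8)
The plan is to combine the structural information already obtained — that $|V_-| \geq n - O(\sqrt{n})$, $|V_+| = O(\sqrt n)$, $\vol(V_+) = O(1)$, and that $w_0 \in V_+$ has $|N(w_0)\cap V_-| = \Omega(n)$ — with the eigen-equation for $\lambda_1$ and the Kostochka–Prince / Thomason edge bounds. First I would note that since $G$ is $K_{s,t}$-minor-free and $|E(G)| = O(n)$, most vertices have bounded degree; the issue is to pin down exactly which vertices can have linear degree and show there are exactly $s-1$ of them. Let $U = \{v \in V(G) : d(v) \geq C_2\sqrt n\}$ for a large constant $C_2$ to be chosen. By Corollary~\ref{cor:star-minor} applied to the bipartite graph between a single high-degree vertex and $V_-$ (which has $n - O(\sqrt n)$ vertices), no vertex can have $\Omega(n)$ neighbours inside a set of size $O(\sqrt n)$ without creating a $K_{1,t}$-minor there; so a high-degree vertex $v$ must send almost all of its edges into $V_-$, and in particular any two high-degree vertices have $\Omega(n)$ common neighbours in $V_-$. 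I would then bound $|U|$: if $|U| \geq s$, pick $s$ vertices of $U$; each pair has $\Omega(n)$ common neighbours in $V_-$, and a standard sunflower/counting argument (or directly Theorem~\ref{thm:bipartite_edge_bound} with $m = |U|$, $n = |V_-|$, since $E(U, V_-) \geq s \cdot (C_2\sqrt n - O(\sqrt n))$ would exceed $(s-1)|V_-| + 4^{s+1}s!\,t\,|U|$ once $C_2$ is large) produces a $K_{s,t}$-minor, contradiction. Hence $|U| \leq s-1$.

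Next I would show $|U| = s-1$ exactly and establish the degree lower bound in~(i). Here the eigenvector $\cx$ for $\lambda_1$ enters: from Lemma~\ref{lem:lambdan}, $\lambda_1 = \sqrt{a_0} + O(1) = (1+o(1))\sqrt{(s-1)n}$, and from the eigen-equation $\lambda_1^2 \cx_{u_0} = \sum_{v} \cx_v |N(v)\cap N(u_0)|$ (where $\cx_{u_0}=1$) together with $\lambda_1^2 \sim (s-1)n$, the vertex $u_0$ must have roughly $(s-1)n$ worth of "weighted common-neighbour mass", which forces $u_0$ to be adjacent to a set of $\sim(s-1)$ vertices each of degree $\sim n$; so $|U| \geq s-1$, hence $|U| = s-1$, and each $u_i \in U$ has $d(u_i) \geq n - C_2\sqrt n$ after adjusting the constant. (Concretely: $\sum_{v \in N(u_0)} d(v) \geq \lambda_1^2 \gtrsim (s-1)n$ while every $v \notin U$ contributes $< C_2\sqrt n$ and $|N(u_0)| = O(\sqrt n)$ outside $U$ contributes $O(n)$ with a small enough constant once $C_2$ is tuned, so $\sum_{u_i \in U} d(u_i) \geq (s-1-o(1))n$, and since each $d(u_i) \leq n$ this forces every $d(u_i) \geq n - O(\sqrt n)$; this also re-proves $|U| \geq s-1$.) Relabel $U = \{u_1,\dots,u_{s-1}\}$.

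Finally, for~(ii): any $v \notin U$ has, by definition of $U$, $d(v) < C_2\sqrt n$ — but to get the cleaner bound $sC_2\sqrt n$ I would instead set up the dichotomy so that $U$ collects vertices of degree $\geq sC_2\sqrt n$ and redo the two counting arguments above with this threshold; the edge-bound argument still gives $|U|\le s-1$, and the eigenvector argument still gives $|U|\ge s-1$ and $d(u_i) \geq n - O(\sqrt n)$, after harmlessly enlarging the $O(\sqrt n)$ error constant. Then~(ii) is immediate from the definition of the complement of $U$. I expect the main obstacle to be the lower-bound direction $|U| \geq s-1$: one has to argue that the $\lambda_1$-mass $(s-1)n$ genuinely requires $s-1$ distinct dominating vertices rather than being spread out, which needs the Kostochka–Prince bound $|E(G)| \leq \frac{t+3s}{2}(n-s+1)$ (or Corollary~\ref{cor:Mader}) to rule out many medium-degree vertices substituting for one high-degree vertex — i.e., controlling $\sum_{v \in N(u_0), v\notin U} d(v)$, which is at most $2|E(G)| = O(n)$ but with an absolute constant that must be beaten by choosing $C_2$ and then $n$ large. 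Care is also needed that the $u_i$ are the \emph{same} $s-1$ vertices identified by both arguments, which follows because any vertex of degree $\geq n - O(\sqrt n)$ is trivially in $U$ and $|U| \le s-1$ leaves no room.
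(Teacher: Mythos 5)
Your proposal takes a genuinely different route from the paper's, and it contains two gaps that I do not think are repairable in the form you propose.

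\textbf{Gap 1: the threshold $C_2\sqrt n$ is far too low to bound $|U|$.} You define $U$ as the set of vertices of degree at least $C_2\sqrt{n}$ (later $sC_2\sqrt n$), and then argue that any two vertices of $U$ have $\Omega(n)$ common neighbours in $V_-$, or alternatively that Theorem~\ref{thm:bipartite_edge_bound} applied to $E(U,V_-)$ yields a $K_{s,t}$-minor. Neither works at this threshold: a vertex of degree $C_2\sqrt n$ has at most $C_2\sqrt n$ neighbours in total, so two such vertices can have at most $O(\sqrt n)$ common neighbours — nowhere near $\Omega(n)$. Likewise, $E(U,V_-)\le s\cdot C_2\sqrt n = O(\sqrt n)$ while the Thomason threshold is $(s-1)|V_-|+C|U|=\Theta(n)$, so the edge bound is vacuous. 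In fact $|U|\le s-1$ is simply false at this threshold in general: a $K_{s,t}$-minor-free graph with $O(n)$ edges can easily contain $\Theta(\sqrt n)$ vertex-disjoint stars each with $\Theta(\sqrt n)$ leaves. The paper avoids this by defining $L$ with the much stronger condition $|N(v)\cap V_-|\ge n-C_2\sqrt n$, i.e.\ degree close to $n$; then $s$ vertices of $L$ really do have $\ge n-sC_2\sqrt n \ge t$ common neighbours in $V_-$, giving an actual $K_{s,t}$ subgraph.

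\textbf{Gap 2: the $\lambda_1$-eigenequation is too coarse to force $|U|\ge s-1$.} You argue that $\lambda_1^2\le \sum_{v\sim u_0}d(v)$ together with $\lambda_1^2\gtrsim (s-1)n$ and $\sum_{v\notin U}d(v)\le 2|E(G)|=O(n)$ forces $s-1$ vertices of near-$n$ degree. But the implicit constant in $2|E(G)|=O(n)$ is the Mader/Kostochka--Prince constant (roughly $t+3s$), which is larger than $s-1$, and it does \emph{not} shrink when you enlarge $C_2$: raising the degree threshold only puts \emph{more} vertices into the complement of $U$ and makes $\sum_{v\notin U}d(v)$ \emph{larger}. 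Indeed in the extremal graph itself, the vertices outside $L$ contribute roughly $\frac{2t+3s-5}{3}(n-s+1) > (s-1)n$ to $\sum_v d(v)$, so the inequality $\lambda_1^2\lesssim kn + O(n)$ is satisfied with any $k\ge 0$ and yields no contradiction. The underlying problem is that bounding $\cx_w\le 1$ for every $w$ loses the entire structure; the sharper estimates $\cx_w=O(1/\sqrt n)$ for $w$ of low degree are only proved \emph{after} the present lemma in the paper, so invoking them here would be circular. The paper instead works with the eigenvector $\cz$ of $\lambda_n$, for which the earlier lemmas already give $\vol(V_+)=O(1)$ (a genuine constant, not $O(\sqrt n)$), $\vol(L)\ge 1$ because $w_0\in L$, and $\vol(S)\le \tfrac{1}{s+1}$; feeding these into the $\lambda_n^2\vol(L)$ identity gives $|L|\ge s-1-\tfrac{1}{s+1}+o(1)$ and hence, by integrality, $|L|\ge s-1$. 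That precise bookkeeping, resting on the $V_+/V_-$ split which only $\cz$ sees, is what your $\cx$-based sketch is missing.

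Finally, your argument for part~(ii) is unproblematic once $L$ is correctly pinned down (it is essentially the paper's: a vertex $v\notin L$ of degree exceeding $sC_2\sqrt n$ would share at least $t$ neighbours with all of $L$ inside $V_-$, producing a $K_{s,t}$), but it inherits the assumption that $|L|=s-1$ with every $u_i$ of degree $\ge n-C_2\sqrt n$, which you have not established.
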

\begin{proof}
This time we define $L$ as follows: 
\[L=\{v\in V_+\colon |N(v)\cap V_-|\geq n -C_2 \sqrt{n}\},\]
where $C_2$ is some big constant chosen later, and let $S=V_+\setminus L$.  

We first claim that $|L|\leq s-1$. Otherwise, there exist $s$ vertices $u_1,\ldots, u_s \in L$. We have
\[\displaystyle\bigcap_{i=1}^s (N(u_i)\cap V_-)\geq
|V_-|-sC_2\sqrt{n}>t,
\] when $n$ is sufficiently large.
Therefore, $G$ contains a subgraph $K_{s,t}$, giving a contradiction. 
Hence $|L|\leq s-1$. 

Now let us consider $\lambda_n^2 \vol(V_+)$. By Lemma \ref{lem:Vsize}, we know that $|V_{+}|\leq C'\sqrt{n}$ for some constant $C'$.
As before, we have
\begin{align}
\lambda_n^2 \vol(V_+)
&\leq \sum_{y\in V_+}\cz_y |E(V_+, N(y)\cap V_-)| \nonumber\\
&= \sum_{y\in L}\cz_y |E(V_+, N(y)\cap V_-)|+ \sum_{y\in S}\cz_y |E(V_+, N(y)\cap V_-)|. \label{eq:V+0a}
\end{align}
We apply the following estimation. We let $C = 4^{s+1}s!t$. 
For $y\in S$, we have
\begin{equation}
\label{eq:V+1a}
  |E(V_+, N(y)\cap V_-)|\leq (s-1) |N(y)\cap V_-| + C|V_+|
  \leq (s-1)(n-C_2\sqrt{n}) + CC'\sqrt{n}. 
\end{equation}
For $y\in L$, we have
\begin{equation}\label{eq:V+2a}
    |E(V_+, N(y)\cap V_-)| \leq |E(V_+, V_-)|\leq (s-1)n+ CC'\sqrt{n}.
\end{equation}

Plugging Equations \eqref{eq:V+1a} and \eqref{eq:V+2a} into Equation \eqref{eq:V+0a},
we get
\begin{align} \nonumber
    \lambda_n^2 \vol(V_+) &\leq \vol(S) \left((s-1)(n-C_2\sqrt{n}) + CC'\sqrt{n} \right) + 
    \vol(L)\left( (s-1)n+ CC'\sqrt{n} )\right)\\
    &=\vol(V_+)\left( (s-1)n+ CC'\sqrt{n} )\right) - \vol(S)(s-1) C_2\sqrt{n}.\label{eq:V+4}
\end{align}

Applying the lower bound of $|\lambda_n|$ in Lemma \ref{lem:lambdan}, we conclude
\begin{equation}\label{eq:volS}
\vol(S) \leq \frac{CC'+ (s-1)(s+t-3)+O(1)}{(s-1)C_2} \vol(V_+).
\end{equation}
Choose $C_2$ large enough such that $\frac{CC'+ (s-1)(s+t-3)+O(1)}{(s-1)C_2}\leq \frac{1}{s^2}$ and $C_2\sqrt{n}-|V_{+}|\geq t$ (recall that $|V_{+}|=O(\sqrt{n})$ by Lemma \ref{lem:Vsize}). We then have that
\[\vol(S)\leq \frac{1}{s^2}\vol(V_+).\]
This implies 
\[\vol(S)\leq \frac{1}{s^2-1}\vol(L).\]
Since $\vol(L)\leq |L|\leq s-1$, we get 
\[\vol(S)\leq \frac{s-1}{s^2-1}=\frac{1}{s+1}.\]

Now we do the similar calculation for $\vol(L)$. We have
\begin{align}
\lambda_n^2 \vol(L)
&\leq \sum_{y\in V_+}\cz_y |E(L, N(y)\cap V_-)| \nonumber\\
&= \sum_{y\in L}\cz_y |E(L, N(y)\cap V_-)|+ \sum_{y\in S}\cz_y |E(L, N(y)\cap V_-)|. \label{eq:V+0}
\end{align}
We apply the following estimation.
For $y\in S$, we have
\begin{equation}
\label{eq:V+1}
  |E(L, N(y)\cap V_-)|\leq (s-1) |N(y)\cap V_-)| +C|L|
  \leq (s-1)(n-C_2\sqrt{n}) + C(s-1). 
\end{equation}
For $y\in L$, we have
\begin{equation}\label{eq:V+2}
    |E(L, N(y)\cap V_-)| \leq |E(L, V_-)|\leq |L|n.
\end{equation}

Plugging Equations \eqref{eq:V+1} and \eqref{eq:V+2} into Equation \eqref{eq:V+0},
we get
\begin{equation} \label{eq:V+4}
    \lambda_n^2 \vol(L) \leq \vol(S) \left((s-1)(n-C_2\sqrt{n}) + C(s-1) \right) + 
    \vol(L) |L|n.
\end{equation}
Since $w_0\in L$, we have $\vol(L)\geq 1$. We then obtain that
\begin{align*}
    |L|&\geq \frac{\lambda_n^2}{n} - \frac{1}{(s^2-1)n} \left((s-1)(n-C_2\sqrt{n}) + C(s-1) \right)\\
    &\geq s-1 -\frac{1}{s+1} +o(1).
\end{align*}
Since $|L|$ is an integer, we have 
\[|L|\geq s-1.\]
Together with the upper bound in Inequality \eqref{eq:L'}, we get $|L|=s-1$.

Now we could write $L=\{u_1, \ldots, u_{s-1}\}$. We then have that
\begin{equation}
    \abs*{\bigcap_{i=1}^{s-1} (N(u_i)\cap V_-)}\geq |V_-|-(s-1)C_2\sqrt{n}.
\end{equation}
Now we claim that for any vertex $v\not\in L$, 
$d(v) \leq sC_2\sqrt{n}$. Otherwise, since $C_2$ is chosen such that $C_2\sqrt{n}-|V_{+}|\geq t$, we then have
$$\abs*{N(v) \cap \lp \bigcap_{i=1}^{s-1} (N(u_i)\cap V_-)\rp}\geq sC_2\sqrt{n}-|V_{+}|-(s-1)C_2\sqrt{n}\geq C_2\sqrt{n}-|V_{+}|\geq t,$$
which implies that $L\cup \{v\}$ and $t$ of their common neighbors form a $K_{s,t}$ in $G$, giving a contradiction.
Thus, $d_{v}\leq sC_2\sqrt{n}$ for any $v\notin L$.
\end{proof}

\begin{lemma} We have
    \begin{enumerate}[(i)]
    \item For any $1\leq i\leq s-1$, $\cz_{u_i}= 1- O\left(\frac{1}{\sqrt{n}}\right)$.
    \item For any vertex $v\not\in \{u_1, \ldots, u_{s-1}\}$, we have   $|\cz_v|=O(\frac{1}{\sqrt{n}})$.
\end{enumerate}
\end{lemma}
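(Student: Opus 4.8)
\noindent\emph{Proof outline.} The plan is to partition $V(G)$ according to the sign of the entries of $\cz$ and to estimate $\cz_v$ on each part via the eigen-equation $\lambda_n \cz_v=\sum_{u\in N(v)}\cz_u$, combined with the facts already established: $\vol(V_+)=O(1)$, $|\cz_v|=O(1/\sqrt{n})$ for every $v\in V_-$ (Corollary~\ref{cor:w0}), $|V_+|=O(\sqrt{n})$ (Lemma~\ref{lem:Vsize}), the degree dichotomy of Lemma~\ref{lem:degree_structure}, and $|\lambda_n|=\Omega(\sqrt{n})$ (Lemma~\ref{lem:lambdan}). Recall that $u_1,\dots,u_{s-1}$ are the vertices of the set $L$ constructed in the proof of Lemma~\ref{lem:degree_structure}, so in particular $u_1,\dots,u_{s-1}\in V_+$.

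For part (ii) I would first dispose of the easy cases: if $v\in V_0$ then $\cz_v=0$, and if $v\in V_-$ then $|\cz_v|=O(1/\sqrt{n})$ by Corollary~\ref{cor:w0}. The remaining vertices are exactly those $v\in V_+\setminus\{u_1,\dots,u_{s-1}\}$, and for such $v$ the eigen-equation gives
$$|\lambda_n|\,|\cz_v|\le\sum_{u\in N(v)\cap V_+}|\cz_u|+\sum_{u\in N(v)\cap V_-}|\cz_u|\le \vol(V_+)+|N(v)\cap V_-|\cdot O(1/\sqrt{n}).$$
By Lemma~\ref{lem:degree_structure}(ii), $|N(v)\cap V_-|\le d(v)\le sC_2\sqrt{n}$, so the right-hand side is $O(1)$; dividing by $|\lambda_n|=\Omega(\sqrt{n})$ yields $|\cz_v|=O(1/\sqrt{n})$.

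For part (i) I would first note that $w_0\in\{u_1,\dots,u_{s-1}\}$: indeed $w_0\in V_+$ by Corollary~\ref{cor:w0}, while the proof of Lemma~\ref{lem:degree_structure} shows $\vol\bigl(V_+\setminus\{u_1,\dots,u_{s-1}\}\bigr)\le \tfrac1{s+1}<1=\cz_{w_0}$, so $w_0$ cannot lie outside $\{u_1,\dots,u_{s-1}\}$; relabel so that $\cz_{u_1}=1$. Each $u_i\in V_+$, so $0<\cz_{u_i}\le 1$ by the normalization, and it remains to bound $\cz_{u_1}-\cz_{u_i}$ from above. Subtracting the eigen-equations for $u_1$ and $u_i$,
$$\lambda_n(\cz_{u_1}-\cz_{u_i})=\sum_{u\in N(u_1)\setminus N(u_i)}\cz_u-\sum_{u\in N(u_i)\setminus N(u_1)}\cz_u.$$
By Lemma~\ref{lem:degree_structure}(i) both $u_1$ and $u_i$ have degree at least $n-C_2\sqrt{n}$, so each of the two symmetric-difference sets has size at most $C_2\sqrt{n}$. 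Splitting each sum into its $V_+$-part (absolute value at most $\vol(V_+)=O(1)$) and its $V_-$-part (absolute value at most $C_2\sqrt{n}\cdot O(1/\sqrt{n})=O(1)$ by Corollary~\ref{cor:w0}) gives $|\lambda_n|\,|\cz_{u_1}-\cz_{u_i}|=O(1)$, hence $\cz_{u_1}-\cz_{u_i}=O(1/\sqrt{n})$ and therefore $\cz_{u_i}=1-O(1/\sqrt{n})$.

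The only genuinely delicate point is that entries of $\cz$ on $V_+$ are a priori bounded only by $1$, so a termwise estimate of the $V_+$-contribution to a neighbourhood sum could be as large as $\Omega(\sqrt{n})$; the argument closes precisely because the earlier bound $\vol(V_+)=O(1)$ controls that whole contribution at once. Apart from this, everything is a routine application of the eigen-equation together with the degree estimates of Lemma~\ref{lem:degree_structure}, so I do not anticipate any further obstacle.
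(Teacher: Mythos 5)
Your proof is correct and follows essentially the same route as the paper: identify $w_0$ among $u_1,\dots,u_{s-1}$ via $\vol(V_+\setminus L)<1$, then bound $\cz_{u_i}$ by subtracting eigen-equations and use the degree/volume estimates of Lemmas~\ref{lem:degree_structure},~\ref{lem:Vsize} and Corollary~\ref{cor:w0}. The only divergence is in part~(ii) for $v\in V_+\setminus L$: the paper iterates the eigen-equation twice (bounding $|\lambda_n|^2\cz_v$ through common neighborhoods in $V_-$), while you apply it once, splitting $N(v)$ into its $V_+$ and $V_-$ parts and using $d(v)\le sC_2\sqrt n$ directly — a mild simplification that reaches the same $O(1/\sqrt n)$ bound.
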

\begin{proof}
We will prove (ii) first. 
Let $C_2$ be the same constant obtained from Lemma \ref{lem:degree_structure}. Let $L=\{v\in V_+\colon |N(v)\cap V_-|\geq n -C_2 \sqrt{n}\}$, and $S=V_+\setminus L$. By Corollary \ref{cor:w0}, we know that for every $v \in V^{-}$, $|\cz_v| = O(\frac{1}{\sqrt{n}})$. Thus it suffices to show that for every $v \in S$, $|\cz_v| = O(\frac{1}{\sqrt{n}})$. Indeed, for every $v\in S$, we have that
\begin{align*}
    |\lambda_n|^2\cz_v &\leq |\lambda_n|  \sum_{u\in N(v)\cap V_-} |\cz_u|\\
      &\leq \sum_{u\in N(v)\cap V_-} \sum_{y\in N(u)\cap V_+}\cz_y\\
      &= \sum_{y\in V_+}\cz_y\cdot |N(v)\cap N(y)\cap V_-| \\
      &\leq sC_2 \cdot \sum_{y\in V_+}\cz_y \\
      &\leq sC_2 \cdot O(1).
      \end{align*}
Thus, $\cz_v= O\left(\frac{1}{n}\right)$. This completes the proof of (ii).

Finally, we estimate $\cz_{u_i}$ for $1\leq i\leq s-1$. By previous lemmas, we know that $w_0 \in \{u_1, \ldots, u_{s-1}\}$. From the eigen-equations, we obtain that for each $u_i$ ($1\leq i\leq s-1$),

\begin{align}
    |\lambda_n| (\cz_{w_0} -\cz_{u_i}) & = -\sum_{u\in N(w_0)\setminus N(u_i)} \cz_u + \sum_{u\in N(u_i)\setminus N(w_0)} \cz_u \\
    &\leq \sum_{u\in (N(w_0)\setminus N(u_i))\cap V_-} |\cz_u| +  \sum_{u\in (N(u_i)\setminus N(w_0))\cap V_+} \cz_u\\
    &\leq \sum_{u\in (N(w_0)\setminus N(u_i))\cap V_-} |\cz_u|  +  O(1)\\
    &\leq C_2\sqrt{n} \cdot O\left(\frac{1}{\sqrt{n}}\right) +  O(1) \\
    &= O(1).
\end{align}
Therefore, we have
$\cz_{u_i}\geq 1 - O\left(\frac{1}{\sqrt{n}}\right)$ since $\cz_{w_0} = 1$ and $\cz_{w_0}-\cz_{u_i} = O(\frac{1}{\sqrt{n}})$.
\end{proof}

Recall that we let $L:= \{u_1, u_2, \cdots, u_{s-1}\}$.
Let $V' = \{v\in V(G)\backslash L: |N(v) \cap L| = s-1\}$ and let $V'' = V(G)\backslash (L\cup V')$. We have the following lemma on the structure of $G$.

\begin{lemma}\label{lem:struc}
We have the following properties.
\begin{enumerate}[(i)]
    \item $|V'| \geq n- (s-1)C_2\sqrt{n}$.
    \item For any $v\in V(G)\backslash L$, $|N(v)\cap V'| \leq t-1$.
    \item In $H= G[V(G)\backslash L]$, for any vertex $v\in V(H)$, $|N_H(N_H(v)) \cap V'| \leq t^2$.
\end{enumerate}
\end{lemma}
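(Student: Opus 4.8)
The three parts build on each other and on the degree bounds from Lemma~\ref{lem:degree_structure}. For part (i), I would count from the side of $L$: each $u_i$ has degree at least $n-C_2\sqrt n$, so $u_i$ misses at most $C_2\sqrt n$ vertices. A vertex $v\notin L$ fails to lie in $V'$ precisely when it is non-adjacent to some $u_i$; hence $|V(G)\setminus(L\cup V')| \le \sum_{i=1}^{s-1}(n-1-d(u_i)) \le (s-1)C_2\sqrt n$, which gives $|V'|\ge n-(s-1)C_2\sqrt n-(s-1) = n - O(\sqrt n)$, and absorbing the additive constant into a slightly larger constant (or noting $s-1$ is negligible) yields the stated bound.

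For part (ii), suppose some $v\in V(G)\setminus L$ had $|N(v)\cap V'|\ge t$. Pick $t$ neighbors $w_1,\dots,w_t\in N(v)\cap V'$. Each $w_j$ is, by definition of $V'$, adjacent to all of $u_1,\dots,u_{s-1}$; and $v$ is adjacent to all $w_j$. If $v\notin L$, then $\{u_1,\dots,u_{s-1},v\}$ together with $\{w_1,\dots,w_t\}$ would form a $K_{s,t}$ subgraph — provided $v$ is itself adjacent to all the $u_i$, which need not hold. The cleaner argument: the set $\{u_1,\dots,u_{s-1}\}$ on one side and $\{w_1,\dots,w_t\}$ on the other already gives a $K_{s-1,t}$; to upgrade to $K_{s,t}$ we need one more vertex adjacent to all $t$ of the $w_j$'s, and $v$ serves exactly that role. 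Since the $w_j\in V'\subseteq V(G)\setminus L$ and $v\notin L$, we may take $v$ as the $s$-th vertex of the part of size $s$ only if $v\neq u_i$ for all $i$, which holds. So we get $K_{s,t}$ as a subgraph, hence as a minor, contradiction. Thus $|N(v)\cap V'|\le t-1$.

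For part (iii), work inside $H = G[V(G)\setminus L]$ and fix $v\in V(H)$. By part (ii) applied to $v$, $|N_H(v)\cap V'|\le |N(v)\cap V'|\le t-1$; but we want to bound $N_H(w)\cap V'$ summed over $w\in N_H(v)$, and $N_H(v)$ may be large (up to $sC_2\sqrt n$). The key extra input is that every neighbor $w$ of $v$ in $H$ that lies in $V'$ contributes, but more importantly I should restrict attention to the right set: for each $w\in N_H(v)$, part (ii) gives $|N_H(w)\cap V'|\le |N(w)\cap V'|\le t-1$. That alone gives only $(t-1)d_H(v)$, which is $O(\sqrt n)$, not $t^2$ — so this naive bound is insufficient and the real obstacle is here. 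The fix: one should only count neighbors $w$ of $v$ that themselves lie in $V'$ (since $N_H(N_H(v))\cap V'$, traversed carefully, is reached via vertices in $V'$ adjacent to $v$)... but that is not quite the definition either. The honest approach is: let $w\in N_H(v)$ be arbitrary with $N_H(w)\cap V'\neq\emptyset$; I claim the number of such $w$ is at most $t-1$ unless we reach a contradiction. Suppose $t$ of the neighbors $w_1,\dots,w_t$ of $v$ each have a neighbor in $V'$; pair up $w_j$ with $y_j\in V'\cap N_H(w_j)$. Now $\{u_1,\dots,u_{s-1}\}$ joined to $\{y_1,\dots,y_t\}$ is a $K_{s-1,t}$ (using $y_j\in V'$), and we want to route the contraction: contract each edge $w_jy_j$ (or the path $v$–$w_j$–$y_j$) to merge $y_j$ into a branch vertex that is now adjacent to $v$; after these contractions $v$ is adjacent to all $t$ contracted vertices, each still adjacent to all $u_i$, yielding a $K_{s,t}$ minor — contradiction. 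Hence at most $t-1$ neighbors $w$ of $v$ have a neighbor in $V'$, and each contributes at most $t-1$ elements to $N_H(N_H(v))\cap V'$ by part (ii), giving $|N_H(N_H(v))\cap V'|\le (t-1)^2 \le t^2$.

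The main obstacle, as flagged, is part (iii): the naive degree bound $d_H(v) = O(\sqrt n)$ is far too weak, so one must extract a $K_{s,t}$-minor obstruction that limits the number of $v$'s neighbors having any neighbor in $V'$ to a constant ($\le t-1$), using contractions of the length-two paths $v\,w_j\,y_j$ to create the required branch sets; only then does composing with part (ii) close the argument. I would double-check the branch-set disjointness (the $w_j$ and $y_j$ must be chosen distinct across $j$, which is possible greedily since at each stage we have removed only $O(1)$ vertices while $|V'| = \Omega(n)$).
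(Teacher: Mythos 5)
Your proofs of parts (i) and (ii) match the paper's. (For (i), the additive $-(s-1)$ you worry about actually cancels: each $u_i$ has at most $C_2\sqrt{n}-1$ non-neighbors, so $|V'|\ge n-(s-1)-(s-1)(C_2\sqrt{n}-1)=n-(s-1)C_2\sqrt{n}$.) You also have the right central idea for (iii) — contract length-two paths $v\,w_j\,y_j$ with $y_j\in V'$ into branch sets and pair them with the singletons $\{u_1\},\ldots,\{u_{s-1}\},\{v\}$ to produce a $K_{s,t}$-minor — which is exactly what the paper does.

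However, your execution of (iii) has a genuine gap. The intermediate claim you introduce, that at most $t-1$ of $v$'s $H$-neighbors have a $V'$-neighbor, is neither needed nor established: many $w$'s could share a single $V'$-neighbor $y$, in which case distinct $y_j$'s cannot be chosen, and part (ii) does not rule this out (it bounds how many $V'$-vertices a single $w$ sees, not how many $w$'s see into $V'$). Your proposed fix also misidentifies where the $y_j$'s are drawn from: $y_j$ must come from $N_H(w_j)\cap V'$, a set of size at most $t-1$ by (ii), not from all of $V'$, so ``$|V'|=\Omega(n)$'' is irrelevant and removing $O(1)$ vertices can exhaust $N_H(w_j)\cap V'$ entirely. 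The paper instead runs the pigeonhole directly on $Y:=N_H(N_H(v))\cap V'$: if $|Y|>t^2$, pick pairs $(w_j,y_j)$ greedily; at step $k$ the number of $y\in Y$ ruled out (already used, or with all of $N_H(v)\cap N_H(y)$ among the used $w_i$'s) is at most $(k-1)+(k-1)(t-1)=(k-1)t$ by (ii), and since $(t-1)t<t^2<|Y|$ one can complete $t$ steps. The resulting paths are automatically vertex-disjoint because $N_H(N_H(v))$ is disjoint from $N_H(v)$ under the paper's definition of iterated neighborhoods. Contracting each $\{w_j,y_j\}$ then gives the $K_{s,t}$-minor as you intended.
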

\begin{proof}
By Lemma \ref{lem:degree_structure}, $\displaystyle\min_{u\in L} d(u)\geq n-C_2\sqrt{n}$. It follows that $|V'|\geq n-(s-1)C_2\sqrt{n}\geq t$. For any $v\in V(G)\backslash L$, $v$ has at most $t-1$ neighbors in $V'$, otherwise, $L\cup \{v\}$ and $t$ of their common neighbors in $V'$ would form a $K_{s,t}$ in $G$.

Now for any $v \in V(G)\backslash L$, we claim that $|N_{H}(N_{H}(v))\cap V'|\leq t^2$. Indeed, suppose not, then by (ii) and the Pigeonhole principle, there exist $t$ vertex-disjoint $2$-vertex paths from $v$ to $t$ distinct vertices in $V'$. But then it is not hard to see that $L\cup \{v\}$ and these $t$ distinct vertices would form a $K_{s,t}$ minor, giving a contradiction. 
\end{proof}

\begin{lemma} We have
    \begin{enumerate}[(i)]
    \item For any $1\leq i\leq s-1$, $\cx_{u_i}= 1- O\left(\frac{1}{\sqrt{n}}\right)$.
    \item For any vertex $v\not\in \{u_1, \ldots, u_{s-1}\}$, we have $\cx_v=O(\frac{1}{\sqrt{n}})$.
\end{enumerate}
\end{lemma}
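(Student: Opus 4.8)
The plan is to prove the analogous statement for the Perron vector $\cx$ exactly as was done for $\cz$, using the already-established structural information (Lemma~\ref{lem:degree_structure}, Lemma~\ref{lem:struc}) and the eigenvalue bounds (Lemma~\ref{lem:lambdan}), which give $\lambda_1 = \sqrt{a_0} + O(1) = \sqrt{(s-1)(n-s+1)} + O(1)$, so in particular $\lambda_1^2 = (s-1)n + O(\sqrt n)$. Recall $\cx$ is normalized so that $\max_v \cx_v = \cx_{u_0} = 1$. The first step is to show that $u_0 \in L = \{u_1,\dots,u_{s-1}\}$ and, more usefully, that $\sum_{v \notin L} \cx_v = O(1)$ (the analogue of $\vol(V_+) = O(1)$): since every vertex outside $L$ has degree $O(\sqrt n)$ by Lemma~\ref{lem:degree_structure}(ii) while $\lambda_1 = \Theta(\sqrt n)$, a vertex $v \notin L$ satisfies $\lambda_1 \cx_v = \sum_{u \in N(v)} \cx_u \le (s-1)\cdot 1 + \sum_{u \in N(v)\setminus L}\cx_u$, and iterating this (using $|N_H(N_H(v)) \cap V'| \le t^2$, or more crudely that two steps in $H$ reach only $O(\sqrt n)$ vertices, combined with $|E(G[V\setminus L])| = O(\sqrt n)$ via Corollary~\ref{cor:star-minor} or the bounded-degree fact) pins down $\sum_{v \notin L}\cx_v = O(1)$. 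Actually the cleanest route: since $\sum_{v\notin L}\lambda_1\cx_v = \sum_{v\notin L}\sum_{u\in N(v)}\cx_u \le (s-1)\sum_{i}\cx_{u_i}\cdot(\text{something}) + 2|E(G[V\setminus L])|\cdot 1 + \ldots$, and $|E(G[V\setminus L])| = O(\sqrt n)$ because $G[V\setminus L]$ has max degree $O(\sqrt n)$ — wait, that only gives $O(n^{3/2})$. Instead use that $G[V\setminus L]$ restricted to $V'$ has, by Lemma~\ref{lem:struc}(ii), max degree $\le t-1$ into $V'$, and $|V''| = O(\sqrt n)$, so $|E(G[V\setminus L])| = O(\sqrt n)$; hence $\lambda_1 \sum_{v\notin L}\cx_v \le (s-1)\cdot(s-1) + O(\sqrt n)$, giving $\sum_{v\notin L}\cx_v = O(1)$.

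Second, with $\sum_{v \notin L}\cx_v = O(1)$ in hand, part (ii) is immediate: for $v \notin L$, the eigen-equation gives $\lambda_1 \cx_v = \sum_{u \in N(v)}\cx_u \le (s-1) + \sum_{u \in N(v)\setminus L}\cx_u \le (s-1) + \sum_{u\notin L}\cx_u = O(1)$, and since $\lambda_1 = \Theta(\sqrt n)$ we conclude $\cx_v = O(1/\sqrt n)$. (One can even get $O(1/n)$ by one more iteration as in the $\cz$ case, but $O(1/\sqrt n)$ suffices for the statement.)

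Third, for part (i), fix $i$ with $1 \le i \le s-1$ and compare $\cx_{u_0}$ and $\cx_{u_i}$ via the eigen-equations, just as in the $\cz$ computation:
\[
\lambda_1(\cx_{u_0} - \cx_{u_i}) = \sum_{u \in N(u_0)\setminus N(u_i)}\cx_u - \sum_{u \in N(u_i)\setminus N(u_0)}\cx_u.
\]
By Lemma~\ref{lem:degree_structure}, both symmetric differences $N(u_0)\triangle N(u_i)$ have size $O(\sqrt n)$ (each $u_j$ has degree $\ge n - C_2\sqrt n$), and by part (ii) every entry $\cx_u$ with $u \notin L$ is $O(1/\sqrt n)$, while $|L \setminus \{u_0,u_i\}| \le s - 3 = O(1)$ contributes $O(1)$; hence the right-hand side is $O(\sqrt n)\cdot O(1/\sqrt n) + O(1) = O(1)$. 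Dividing by $\lambda_1 = \Theta(\sqrt n)$ gives $\cx_{u_0} - \cx_{u_i} = O(1/\sqrt n)$, and since $\cx_{u_0} = 1$ and $\cx_{u_i} \le 1$ we get $\cx_{u_i} = 1 - O(1/\sqrt n)$. (If $u_0 \notin L$ one first notes this is impossible, since then $\cx_{u_0} = O(1/\sqrt n) < 1$ by part (ii), contradicting $\cx_{u_0} = 1$; so $u_0 \in L$.)

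The main obstacle is the first step — establishing $\sum_{v \notin L}\cx_v = O(1)$ — because, unlike the $\cz$ case where one exploited sign cancellation and the volume bound $\vol(V(G)) = O(\sqrt n)$ that followed from $|\lambda_n| \ge \Omega(\sqrt n)$ together with $|E(G)| = O(n)$, here all entries of $\cx$ are positive, so the crude bound $\lambda_1 \sum_{v\notin L}\cx_v \le 2|E(G)| = O(n)$ only yields $\sum_{v\notin L}\cx_v = O(\sqrt n)$, which is too weak. The fix is to use the sharper structural facts proved for $G$: outside $L$ the graph is essentially a disjoint union of small ($\le t-1$)-degree pieces within $V'$ plus an $O(\sqrt n)$-sized exceptional set $V''$ (Lemma~\ref{lem:struc}), so that $|E(G[V\setminus L])| = O(\sqrt n)$ rather than $O(n)$; feeding this into $\lambda_1\sum_{v\notin L}\cx_v \le (s-1)^2 + 2|E(G[V\setminus L])|$ closes the argument. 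Once past this point, the rest is a routine repetition of the corresponding $\cz$ estimates with $\lambda_1$ in place of $\lambda_n$.
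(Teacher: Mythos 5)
Your step 3 (the comparison of $\cx_{u_0}$ and $\cx_{u_i}$ via subtracting eigen-equations) is fine and is essentially what the paper does. The problem is step 1, which is where you locate the main difficulty, and your fix there does not work.

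The claim that $|E(G[V\setminus L])| = O(\sqrt{n})$ is false. Lemma~\ref{lem:struc}(ii) gives that every $v\in V(G)\setminus L$ has at most $t-1$ neighbors in $V'$; since $|V'| = n - O(\sqrt n)$, the induced graph $G[V']$ has maximum degree at most $t-1$ on a vertex set of size $\Theta(n)$, which only gives $|E(G[V'])| = O(n)$, not $O(\sqrt n)$. Indeed, in the extremal graph $L_{\max}\vee(\ell_0 K_t\cup\cdots)$ the part outside $L$ contains $\Theta(n/t)$ vertex-disjoint copies of $K_t$ and hence $\Theta(n)$ edges. Consequently the conclusion $\sum_{v\notin L}\cx_v = O(1)$ is also false: in the extremal graph every entry $\cx_v$ with $v\notin L$ is $\Theta(1/\sqrt n)$ (one checks $\cx_v = (s-1)/(\lambda_1-(t-1))$ or $(s-1)/\lambda_1$), and summing $\Theta(n)$ such entries gives $\sum_{v\notin L}\cx_v = \Theta(\sqrt n)$. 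The intuition that this should be the Perron-side analogue of $\vol(V_+)=O(1)$ is misleading: for $\cz$ the set $V_+$ is the \emph{small} set of positive-sign vertices (essentially $L$), whereas $V\setminus L$ is the large side, and there is no sign cancellation available for $\cx$. With only $\sum_{v\notin L}\cx_v = O(\sqrt n)$, your step 2 yields $\lambda_1\cx_v = O(\sqrt n)$, i.e.\ $\cx_v = O(1)$, which is vacuous.

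The paper avoids any global sum over $V\setminus L$ and instead bounds $\cx_v$ pointwise by iterating the eigen-equation twice, so that the useful quantity is $\lambda_1^2\cx_v$. Writing $\lambda_1^2\cx_v = \lambda_1\sum_{u\in N(v)}\cx_u$ and splitting $N(v)$ over $L$, $V'$, $V''$: the $L$-part contributes $\le (s-1)\lambda_1$, the $V'$-part contributes $\le (t-1)\lambda_1$ by Lemma~\ref{lem:struc}(ii), and the $V''$-part is expanded once more via the eigen-equation. In that second expansion, the contribution through $L$ is $\le (s-1)|V''| = O(\sqrt n)$, the contribution through $V'$ is $O(1)$ via Lemma~\ref{lem:struc}(iii), and the contribution through $V''$ is $\le 2|E(G[V''])| = O(\sqrt n)$ by Corollary~\ref{cor:Mader} applied to the $O(\sqrt n)$-vertex set $V''$. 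Altogether $\lambda_1^2\cx_v = O(\sqrt n)$, and since $\lambda_1^2 = \Theta(n)$ this gives $\cx_v = O(1/\sqrt n)$. The key point is that the double application of the eigen-equation brings down a factor of $\lambda_1^2 = \Theta(n)$ against an error that is only $O(\sqrt n)$; a single application cannot beat the $|V''| = O(\sqrt n)$ term. You should replace your step 1 and step 2 with this local two-step expansion.
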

\begin{proof}
    Let us prove (ii) first. For any vertex $v\notin\{u_1, \ldots, u_{s-1}\}$, by the eigen-equations, we have that

\begin{align*}
    \lambda_1^2 \cx_v &= \lambda_1 \sum_{u\in N(v)}\cx_u \\
    &= \lambda_1 \left(\sum_{u\in N(v)\cap V'}\cx_u + \sum_{u\in N(v)\cap L} \cx_u + \sum_{u\in N(v)\cap V''} \cx_u     \right)\\ 
    &\le \lambda_1 \left((t-1) + (s-1) + \sum_{u\in N(v)\cap V''} \cx_u     \right)\\ 
    &= (t+s-2)\lambda_1 +  \sum_{u\in N(v)\cap V''} \lambda_1 \cx_u\\
      &=  (t+s-2)\lambda_1 + \sum_{u\in N(v)\cap V''} \sum_{w\in N(u)}\cx_w\\
       &=  (t+s-2)\lambda_1 + \sum_{u\in N(v)\cap V''} \left(\sum_{w\in N(u)\cap L}\cx_w +
       \sum_{w\in N(u)\cap V'}\cx_w +\sum_{w\in N(u)\cap V''}\cx_w 
       \right)\\
    &\leq   (t+s-2)\lambda_1 +  (s-1)|V''| + t^2 + \sum_{u\in N(v)\cap V''} \sum_{w\in N(u)\cap V''}\cx_w\\
    &\leq  (t+s-2)\lambda_1 +  (s-1)|V''| + t^2 + 2|E(G[V''])|\\
     &\leq (t+s-2)\lambda_1 +  (s-1)(sC_2\sqrt{n}) + t^2 + O(\sqrt{n})\\
     &= O(\sqrt{n}).
\end{align*}
It follows that $\cx_v = O(\frac{1}{\sqrt{n}})$.

Now we will prove (i). Recall that $u_0$ is a vertex such that $\cx_{u_0} = 1$. Thus $u_0 \in L$. 
Let $u_i$ be an arbitrary vertex in $L\backslash \{u_0\}$.

If $u_0 u_i$ is not an edge of $G$, then we have
\begin{align*}
    \lambda_1|\cx_{u_0}-\cx_{u_i}| &\leq \sum_{v\in V'' }\cx_v + \sum_{v\in L}\cx_v \\
    &\leq |V''|\cdot O\left( \frac{1}{\sqrt{n}}\right) + (s-1)\\
    &= O(1).
\end{align*}
If $u_0 u_i$ is an edge of $G$, we have
\begin{align*}
    (\lambda_1-1)|\cx_{u_0}-\cx_{u_i}| &\leq \sum_{v\in V''}\cx_v + \sum_{v\in L}\cx_v \\
    &\leq |V''|\cdot O\left( \frac{1}{\sqrt{n}}\right) + (s-1) \\
    &= O(1).
\end{align*}
In both cases, we have
$$ |\cx_{u_0}-\cx_{u_i}| =   O\left( \frac{1}{\sqrt{n}}\right).$$  
It follows that $\cx_{u_i} = 1-O\lp \frac{1}{\sqrt{n}}\rp$ for any $i\in [s-1]$.
\end{proof}

Now we are ready to show that $G$ has $s-1$ vertices that are connected to each of the rest of the $n-s+1$ vertices.

\begin{lemma}\label{lem:compbipsubgraph}
    $G$ contains the subgraph $K_{s-1, n-s+1}$.
\end{lemma}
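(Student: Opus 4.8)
The plan is to show that each vertex $v \notin L$ is adjacent to all of $u_1, \ldots, u_{s-1}$, i.e.\ that $V'' = \emptyset$. Suppose for contradiction that some vertex $v \in V''$ is missing an edge to some $u_i$. The idea is to form a new graph $G'$ by adding the edge $vu_i$ (and possibly several such edges at once, or more carefully adding exactly those edges needed to make $v$ dominated by $L$), and to show two things: first, that $G'$ is still $K_{s,t}$-minor-free, and second, that $S(G') > S(G)$, contradicting the maximality of $G$.

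For the minor-freeness, the key observation is Lemma~\ref{lem:struc}: each vertex in $V(G)\setminus L$ has at most $t-1$ neighbors in $V'$ and, more importantly, at most $t^2$ vertices of $V'$ within distance two in $H = G[V(G)\setminus L]$. Adding edges from vertices of $V''$ to $L$ only changes the adjacency between $V''$ and $L$; it does not create new short paths inside $H$. I would argue that if $G + (\text{new } L\text{-edges})$ contained a $K_{s,t}$-minor, then since $|L| = s-1$, at least one branch set of the $K_s$-side must lie entirely in $V(G)\setminus L$, and the $t$ branch sets on the other side would have to be reachable from that branch set; bounding how many vertices of $V'$ can be reached (using the degree bounds $d(v) \le sC_2\sqrt n$ for $v\notin L$ from Lemma~\ref{lem:degree_structure} and the distance-two bound from Lemma~\ref{lem:struc}) shows this is impossible for large $n$.

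For the spread increase, I would use the Rayleigh-quotient characterization: $S(G) \le \bx^\top A(G)\bx - \cz^\top A(G)\cz$ is not quite it, but $S(G') \ge \frac{\bx^\top A(G')\bx}{\bx^\top\bx} - \frac{\cz^\top A(G')\cz}{\cz^\top\cz}$, while $S(G) = \frac{\bx^\top A(G)\bx}{\bx^\top\bx} - \frac{\cz^\top A(G)\cz}{\cz^\top\cz}$. Adding an edge $vu_i$ changes $\bx^\top A\bx$ by $2\cx_v\cx_{u_i}$ and $\cz^\top A\cz$ by $2\cz_v\cz_{u_i}$. Using the estimates just established — $\cx_{u_i} = 1 - O(n^{-1/2})$, $\cz_{u_i} = 1 - O(n^{-1/2})$, while $\cx_v, |\cz_v| = O(n^{-1/2})$ for $v \notin L$ — the gain from the $\bx$ term is $2\cx_v(1-o(1))$, which is positive, and the possible loss from the $\cz$ term is at most $2|\cz_v| \le 2\cx_v \cdot O(1)$ in magnitude; one must check the signs carefully. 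In fact the clean way is: since $w_0 \in L$ has $\cz_{w_0} = 1 > 0$ and for $v \notin L$ we have $\cz_v$ small, adding the edge $vu_i$ with $\cz_{u_i} > 0$ might increase $\cz^\top A\cz$, which is bad — so instead I would exploit that $\cx_v > 0$ strictly and compare more carefully, perhaps using that $\lambda_1(G') \ge \lambda_1(G) + \Omega(\cx_v^2 \cdot \text{something})$ is too weak, and instead directly estimate $\lambda_1(G') - \lambda_1(G)$ and $\lambda_n(G') - \lambda_n(G)$ separately: the first is at least $\frac{2\cx_v\cx_{u_i}}{\bx^\top\bx}$, and the second is at most $\frac{2\cz_v\cz_{u_i}}{\cz^\top\cz}$ in absolute value but could be of either sign; since $\bx^\top\bx = \Theta(1)$ (dominated by the $s-1$ entries near $1$) and $\cz^\top\cz = \Theta(1)$ likewise, and $\cx_v$ need not be as large as $|\cz_v|$, this requires a genuine argument.

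The main obstacle is precisely the sign/magnitude bookkeeping in the spread comparison: it is not automatic that adding an edge to a dominating vertex increases the spread, because the edge could increase $\lambda_n$ (making $|\lambda_n|$ smaller) by roughly the same order as it increases $\lambda_1$. I expect the resolution is to either (a) add all missing $L$-edges at a vertex $v \in V''$ simultaneously and show the net Rayleigh-quotient change for $\bx$ strictly dominates that for $\cz$ using $\cx_v > 0$ and the fact that $v$ gains at least one edge to a near-$1$ entry while the corresponding $\cz$ entries, being $O(n^{-1/2})$, contribute a change that is $O(n^{-1/2})$ smaller; or (b) invoke a more refined inequality, e.g.\ that for the maximum-spread graph, if $v \in V''$ then replacing $G$ by $G - (\text{edges of } v) + (\text{all } s-1 \text{ edges } vu_j)$ cannot decrease $\lambda_1$ (since $v$'s old neighbors had small $\cx$-value) and cannot increase $\lambda_n$ by more than a lower-order term, yielding the strict inequality $S(G') > S(G)$ for $n$ large. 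Once $V'' = \emptyset$ is forced, every vertex outside $L$ is adjacent to all of $L$, so $G \supseteq K_{s-1,n-s+1}$, which is the claim.
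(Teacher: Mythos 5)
Your option (b) — remove all of $v_0$'s edges and then join $v_0$ to all of $L$ — is exactly the move the paper makes, and you correctly diagnose that the delicate point is the $\lambda_n$ estimate. But your proposal leaves that point unresolved: you observe that adding an edge $v_0 u_i$ with $\cz_{u_i}\approx 1>0$ ``might increase $\cz^{\top}A\cz$, which is bad,'' and then assert without justification that $\lambda_n$ ``cannot increase by more than a lower-order term.'' As stated this is not enough, because if $v_0\in V_+$ the added edges to $L$ increase $\cz^{\top}A\cz$ by roughly $2\cz_{v_0}\vol(L)$, which is the \emph{same} order as the gain in $\bx^{\top}A\bx$; nothing forces the spread to go up.

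The missing idea is a sign-flipped test vector: define $\tilde\cz$ by $\tilde\cz_u=\cz_u$ for $u\ne v_0$ and $\tilde\cz_{v_0}=-|\cz_{v_0}|$. With this choice, the new edges $v_0u_1,\dots,v_0u_{s-1}$ contribute $-2|\cz_{v_0}|\vol(L)\approx -2|\cz_{v_0}|(s-1)$ to $\tilde\cz^{\top}A(G')\tilde\cz$, i.e.\ a definite \emph{decrease} of order $|\cz_{v_0}|$, regardless of the sign of $\cz_{v_0}$. The contribution of the removed edges is controlled because one should also pick $v_0\in V''$ of bounded degree inside $G[V'']$ (such a $v_0$ exists by Corollary~\ref{cor:Mader}); then $v_0$ has at most $t-1$ neighbors in $V'$ and $O(1)$ neighbors in $V''$, all with $|\cz|=O(n^{-1/2})$, so the removed edges change the Rayleigh quotient by only $O(n^{-1/2})|\cz_{v_0}|$. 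Thus $\tilde\cz^{\top}A(G')\tilde\cz < \cz^{\top}A(G)\cz$ and $\lambda_n(G')<\lambda_n(G)$. The $\lambda_1$ side goes through as you describe using $\bx$ unchanged. Without the sign flip (and without the small-degree choice of $v_0$), the estimate you need for $\lambda_n$ does not follow, so as written there is a genuine gap; it is, however, a local one and the rest of your outline (minor-freeness of $G'$, the $\lambda_1$ increase, and the conclusion $V''=\emptyset$) matches the paper's argument.
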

\begin{proof}
      Let $\cx$ and $\cz$ be the eigenvectors associated with $\lambda_1$ and $\lambda_n$ respectively. Assume that $\cx$ and $\cz$ are both normalized such that the largest entries of them in absolute value are $1$.
    By Lemma \ref{lem:degree_structure}, there exist $s-1$ vertices $L = \{u_1, u_2, \cdots, u_{s-1}\}$ such that for every $v\in L$, $d(v) \geq n -C_2\sqrt{n}$ and for every $v\notin L$, $d(v)\leq sC_2\sqrt{n}$. Recall that $V' = \{v\in V(G)\backslash L: |N(v) \cap L| = s-1\}$ and $V'' = V(G)\backslash (L\cup V')$.
    
    To prove the lemma, it suffices to show that $V''$ is empty. Suppose otherwise that $V''$ is not empty. Note that $V''$ induces a $K_{s,t}$-minor-free graph, and by Theorem \ref{thm:linear-edge-bound} and its corollary, we know that there exists some constant $C_0$ and some vertex $v_0\in V''$ such that $d_{G[V'']}(v_0) \leq C_0$.
    Moreover, observe that $v_0$ has at most $(t-1)$ neighbors in $V'$, as otherwise $L \cup \{v_0\}$ and $t$ of their common neighbors would form a $K_{s,t}$ in $G$.
    
    Let $G'$ be obtained from $G$ by removing all the edges of $G$ incident with $v_0$ and adding an edge from $v_0$ to every vertex of $L$, so that $E(G') = E(G-v_0) \cup \{v_0 u_1, v_0 u_2, \cdots, v_0 u_{s-1}\}$.  Observe $G'$ is still $K_{s,t}$-minor-free.
    
    We claim that $\lambda_n(G') < \lambda_n(G)$. Indeed, consider the vector $\tilde{\cz}$ such that $\tilde{\cz}_{u} = \cz_u$ for $u\neq v_0$ and $\tilde{\cz}_{v_0} = -|\cz_{v_0}|$. Then
    \begin{align*}
        \tilde{\cz}' A(G') \tilde{\cz} & = \dss_{uv\in E(G-v_0)}2\cz_u \cz_v + 2\tilde{\cz}_{v_0}\cdot \vol(L) \\
        & = \dss_{uv\in E(G)}2\cz_u\cz_v - 2\dss_{y\sim v_0}\cz_y\cz_{v_0} - 2|\cz_{v_0}|\vol(L) \\
        &\leq \cz' A(G) \cz + 2 \dss_{y\sim v_0} |\cz_y\cz_{v_0}| - 2|\cz_{v_0}| \cdot \vol(L)\\
                      & \leq \cz' A(G)\cz + 2 \cdot (t+C_0) \cdot O\left(\frac{1}{\sqrt{n}}\right)\cdot |\cz_{v_0}| -  2\lp 1- O\left(\frac{1}{\sqrt{n}}\right)\rp |\cz_{v_0}|\\
                      & < \cz' A(G) \cz.
    \end{align*}   

   Similarly, we claim that $\lambda_1(G') > \lambda_1(G)$. Indeed,
     \begin{align*}
       \cx'\cx \lambda_1(G') &= \cx' A(G') \cx \\
                      &= \cx' A(G) \cx - 2 \dss_{y\sim {v_0}} \cx_y \cx_{v_0} + 2\cx_{v_0} \vol(L)\\
                      & \geq \cx' \cx \lambda_1(G) - 2 \cdot (t+C_0) \cdot O\left(\frac{1}{\sqrt{n}}\right)\cdot \cx_{v_0} +  2\lp 1- O\left(\frac{1}{\sqrt{n}}\right)\rp \cx_{v_0}\\
                      & > \cx' \cx \lambda_1(G).
    \end{align*}
Hence we have $S(G') =\lambda_1(G') -\lambda_n(G') > \lambda_1(G) -\lambda_n(G) = S(G)$, giving a contradiction.
\end{proof}

\section{Proof of Theorem \ref{thm:main}}\label{sec:main_thm}

By Lemma~\ref{lem:compbipsubgraph}, a maximum-spread $K_{s,t}$-minor-free graph $G$ contains a complete bipartite subgraph $K_{s-1, n-s+1}$. We denote the part of $s-1$ vertices by $L$ and the other part of $n-s+1$ vertices by $R$.  
Let $\alpha$ be a normalized eigenvector corresponding to an eigenvalue $\lambda$ of the adjacency matrix of $G$. Let $A_L$ (or $A_R$) be the adjacency matrix of the induced subgraph $G[L]$ (or $G[R]$) respectively.

Let $\alpha_L$ (respectively, $ \alpha_R$)
denote the restriction of $\alpha$ to $L$ (respectively, $R$).
The following lemma computes the vectors $\alpha_L$ and $\alpha_R$.

\begin{lemma}\label{l1}
  If $|\lambda| > t - 1$, then
    \begin{align} \label{eq:taylor_eqR} 
       \alpha_R =(\one' \alpha_L)\sum_{k=0}^\infty \lambda^{-(k+1)} A_R^k \one,\\
        \label{eq:taylor_eqL} 
        \alpha_L =(\one' \alpha_R)\sum_{k=0}^\infty \lambda^{-(k+1)} A_L^k \one.
    \end{align}
\end{lemma}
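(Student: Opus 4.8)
The plan is to derive the two formulas by unfolding the eigen-equation of $A(G)$ restricted to the blocks $L$ and $R$, using the complete bipartite structure between them. Write $A(G)$ in block form with respect to the partition $V(G)=L\cup R$: the diagonal blocks are $A_L$ and $A_R$, and since $G$ contains $K_{s-1,n-s+1}$ with parts $L,R$, the off-diagonal block is the all-ones matrix $\one\one'$ (of the appropriate dimensions). The eigen-equation $A(G)\alpha=\lambda\alpha$ then splits into the two vector equations
\begin{align*}
A_L\alpha_L + (\one'\alpha_R)\one &= \lambda\alpha_L,\\
A_R\alpha_R + (\one'\alpha_L)\one &= \lambda\alpha_R.
\end{align*}

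From the second equation, $(\lambda I - A_R)\alpha_R = (\one'\alpha_L)\one$. The key observation is that $\lambda I - A_R$ is invertible when $|\lambda|>t-1$: by Lemma~\ref{lem:struc}(ii) every vertex of $R$ has at most $t-1$ neighbors inside $V(G)\setminus L\supseteq R$, so $G[R]$ has maximum degree at most $t-1$, hence every eigenvalue $\mu$ of $A_R$ satisfies $|\mu|\le t-1<|\lambda|$. Consequently $\|\lambda^{-1}A_R\|<1$ in the appropriate operator norm (e.g. the norm bounded by maximum degree, or spectral norm since $A_R$ is symmetric), so the Neumann series $\sum_{k\ge 0}\lambda^{-(k+1)}A_R^k$ converges and equals $(\lambda I - A_R)^{-1}$. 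This yields $\alpha_R=(\one'\alpha_L)\sum_{k=0}^\infty \lambda^{-(k+1)}A_R^k\one$, which is \eqref{eq:taylor_eqR}. The identical argument applied to the first equation, using that $G[L]$ has at most $s-1$ vertices and hence maximum degree at most $s-2\le t-1<|\lambda|$ (so again $\lambda I - A_L$ is invertible with convergent Neumann series), gives \eqref{eq:taylor_eqL}.

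The main point requiring care — really the only nontrivial step — is justifying the convergence of the Neumann series, i.e. verifying the spectral radius of $A_R$ is strictly less than $|\lambda|$. This rests on Lemma~\ref{lem:struc}(ii): since every vertex in $V(G)\setminus L$ has at most $t-1$ neighbors in $V'$ and, more to the point, $G[R]$ is $K_{s,t}$-minor-free with the additional constraint that forces bounded degree, the maximum degree of $G[R]$ is at most $t-1$; Gershgorin's disk theorem (or the standard bound $\rho(A_R)\le\Delta(G[R])$) then gives $\rho(A_R)\le t-1<|\lambda|$. One should note that $\alpha$ is assumed normalized but not necessarily to have positive entries, so for eigenvalues other than $\lambda_1$ one cannot appeal to Perron--Frobenius; however, the Neumann series argument is purely algebraic and needs no positivity, so it applies to $\lambda_1$ and $\lambda_n$ alike (and indeed to any eigenvalue with $|\lambda|>t-1$). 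I would present the $R$-block computation in full and then simply remark that the $L$-block computation is symmetric, swapping the roles of $L$ and $R$ and of $A_L$ and $A_R$.
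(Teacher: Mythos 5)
Your proof is correct and follows essentially the same route as the paper: write the eigen-equation in block form with respect to $L\cup R$, invert $\lambda I - A_R$ via a Neumann series, and justify convergence by bounding $\rho(A_R)$ through a maximum-degree bound on $G[R]$. The only cosmetic difference is how you get that degree bound: the paper observes directly that since every vertex of $L$ is joined to every vertex of $R$, a $K_{1,t}$-minor inside $G[R]$ would combine with $L$ to give a $K_{s,t}$-minor, so $G[R]$ is $K_{1,t}$-minor-free and hence has maximum degree at most $t-1$; you instead cite Lemma~\ref{lem:struc}(ii), which (after Lemma~\ref{lem:compbipsubgraph} forces $V''=\emptyset$, so $V'=R$) gives the same conclusion. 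Your added remark that the argument is purely algebraic and needs no Perron--Frobenius positivity — so it applies equally to $\lambda_1$ and $\lambda_n$ — is a sound clarification of something the paper leaves implicit.
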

\begin{proof}
Note that since $G$ is $K_{s,t}$-minor-free and every vertex in $L$ is adjacent to every vertex in $R$, it follows that $G[R]$ is $K_{1,t}$-minor-free, and thus the maximum degree of $G[R]$ is at most $t-1$.  For $n$ sufficiently large,
both $\lambda_1(G)$ and $|\lambda_n(G)|$ are greater than $t-1$. 
Hence when restricting the coordinates of $A(G)\alpha$ to $R$, we have that
\begin{equation}\label{eq:bx}
    A_R\alpha_R + (\one'\alpha_L) \one = \lambda \alpha_R.
\end{equation}
It then follows that
\begin{align}
    \alpha_R &= (\one'\alpha_L) (\lambda I-A_R)^{-1}\one  \nonumber\\
    &=(\one'\alpha_L) \lambda^{-1}  (I-\lambda^{-1}A_R)^{-1}\one  \nonumber\\
    &= (\one'\alpha_L)\lambda^{-1} \sum_{k=0}^\infty (\lambda^{-1}A_R)^{k} \one\nonumber\\
    &= (\one'\alpha_L)\sum_{k=0}^\infty \lambda^{-(k+1)} A_R^k \one. 
    \end{align}
Here we use the assumption that $|\lambda|> t-1 \geq \lambda_1(A_R)$ so that the infinite series converges.
Similarly, we have
$$\alpha_L =(\one' \alpha_R)\sum_{k=0}^\infty \lambda^{-(k+1)} A_L^k \one.$$
\end{proof}

\begin{lemma}\label{l:lambda}
Both $\lambda_1$ and $\lambda_n$ satisfy the following equation.
\begin{equation} \label{eq:lambda}
        \lambda^2 = 
  \left(\sum_{k=0}^\infty \lambda^{-k} \one' A_L^k \one\right) \cdot
   \left(\sum_{k=0}^\infty \lambda^{-k} \one' A_R^k \one\right).
\end{equation}
\end{lemma}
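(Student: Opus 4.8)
The plan is to derive equation \eqref{eq:lambda} by contracting the two vector identities of Lemma~\ref{l1} against the all-ones vector and multiplying the resulting scalar relations. For $n$ sufficiently large, Lemma~\ref{lem:lambdan} ensures that $|\lambda| > t-1$ for both $\lambda = \lambda_1$ and $\lambda = \lambda_n$, so the hypothesis of Lemma~\ref{l1} is met; moreover $\lambda_1(A_R) \le t-1$ (since $G[R]$ is $K_{1,t}$-minor-free, hence has maximum degree at most $t-1$) and $\lambda_1(A_L) \le s-2 \le t-1$ (since $|L| = s-1 \le t-1$), so both Neumann series in \eqref{eq:taylor_eqR} and \eqref{eq:taylor_eqL} converge absolutely and the manipulations below are legitimate.

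First I would left-multiply \eqref{eq:taylor_eqR} and \eqref{eq:taylor_eqL} by $\one'$ to obtain the scalar identities
$\one'\alpha_R = (\one'\alpha_L)\sum_{k=0}^\infty \lambda^{-(k+1)}\one' A_R^k\one$ and $\one'\alpha_L = (\one'\alpha_R)\sum_{k=0}^\infty \lambda^{-(k+1)}\one' A_L^k\one$. Before combining these I would record that $\one'\alpha_L \neq 0$ and $\one'\alpha_R \neq 0$. For $\lambda = \lambda_1$ this is immediate from Perron--Frobenius positivity of $\cx$. For $\lambda = \lambda_n$: if $\one'\alpha_L = 0$, then \eqref{eq:taylor_eqR} forces $\alpha_R = \bm{0}$, hence $\one'\alpha_R = 0$, and then \eqref{eq:taylor_eqL} forces $\alpha_L = \bm{0}$, so $\alpha = \bm{0}$, contradicting that $\alpha$ is an eigenvector; the symmetric argument rules out $\one'\alpha_R = 0$.

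Multiplying the two scalar identities and cancelling the nonzero factor $(\one'\alpha_L)(\one'\alpha_R)$ from both sides gives
$1 = \left(\sum_{k=0}^\infty \lambda^{-(k+1)}\one' A_L^k\one\right)\left(\sum_{k=0}^\infty \lambda^{-(k+1)}\one' A_R^k\one\right) = \lambda^{-2}\left(\sum_{k=0}^\infty \lambda^{-k}\one' A_L^k\one\right)\left(\sum_{k=0}^\infty \lambda^{-k}\one' A_R^k\one\right)$,
after pulling a factor $\lambda^{-1}$ out of each sum. Rearranging yields precisely \eqref{eq:lambda}. There is no real obstacle in this argument; the only points needing a line of justification are the nonvanishing of $\one'\alpha_L$ and $\one'\alpha_R$ for the eigenvector $\cz$ of $\lambda_n$, handled above, and the convergence of the series, which is exactly the input already used to prove Lemma~\ref{l1}.
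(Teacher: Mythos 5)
Your argument is correct and follows essentially the same route as the paper: contract the two vector identities of Lemma~\ref{l1} against $\one$ to get two scalar relations, combine them, and cancel the nonzero scalar factor (you multiply the two relations, the paper substitutes one into the other; this is the same algebra). One small point worth keeping from your write-up: the paper's proof ends by asserting $\one'\alpha_R>0$, which is immediate for $\lambda_1$ by Perron--Frobenius but is not obviously a positive quantity when $\alpha=\cz$ is the $\lambda_n$-eigenvector (whose $R$-coordinates are predominantly negative); your short contradiction argument showing $\one'\alpha_L\neq 0$ and $\one'\alpha_R\neq 0$ directly from \eqref{eq:taylor_eqR}--\eqref{eq:taylor_eqL} is the cleaner justification for the cancellation in that case.
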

\begin{proof}From Equations \eqref{eq:taylor_eqR} and \eqref{eq:taylor_eqL}, we have
    \begin{align}
       \one'\alpha_R =(\one' \alpha_L)\sum_{k=0}^\infty  \one' \lambda^{-(k+1)} A_R^k \one,\\
        \one'\alpha_L =(\one' \alpha_R)\sum_{k=0}^\infty \one'\lambda^{-(k+1)} A_L^k \one.
    \end{align}
Thus
\begin{equation}
\one'\alpha_R =(\one' \alpha_R)
\left(\sum_{k=0}^\infty \one'\lambda^{-(k+1)} A_L^k \one\right) \cdot
\left(\sum_{k=0}^\infty \lambda^{-(k+1)} \one' A_R^k \one\right).
\end{equation}
Since $\one'\alpha_R>0$, equation \eqref{eq:lambda} is obtained by canceling $\one'\alpha_R$. 
\end{proof}

For $k=1,2,3\ldots$, let $l_k=  \one' A_L^k \one$, $r_k=  \one' A_R^k \one$, and $a_k=\sum_{j=0}^kl_jr_{k-j}$.
Then Equation \eqref{eq:lambda} can be written as:
\begin{equation}\label{eq:lambda_a}
    \lambda^2=\sum_{k=0}^\infty a_k \lambda^{-k}.
\end{equation}

In particular, we have
\begin{align}
l_0 &= s-1; \\
l_1 &= 2|E(G[L])|; \\
r_0 &= n-s+1; \\
r_1 &= 2|E(G[R])|; \\
a_0 &=l_0r_0=(s-1)(n-s+1),\\
a_1&=l_0r_1+l_1r_0.
\end{align}

\begin{lemma} \label{lem:approximation}
We have the following estimation on the spread of $G$:
\begin{equation}\label{eq:spreadapprox}
    S(G)=2\sqrt{a_0}+\frac{2c_2}{\sqrt{a_0}} + \frac{2c_4}{a_0^{3/2}} +  \frac{2c_6}{a_0^{5/2}} + O\left(a_0^{-7/2}\right).
\end{equation}
Here 
\begin{align}
a_0 &=(s-1)(n-s+1)\\
c_2 &= -\frac38 \left(\frac{a_1}{a_0}\right)^2 + \frac12 \frac{a_2}{a_0}, \label{eq:c2}
\\
c_4 &=-\frac{105}{128} \left(\frac{a_1}{a_0}\right)^4 +\frac{35}{16} \left(\frac{a_1}{a_0}\right)^2\frac{a_2}{a_0}
-\frac{5}{8}\left(\frac{a_2}{a_0}\right)^2 -\frac{5}{4}\frac{a_1}{a_0}\frac{a_3}{a_0} +\frac{1}{2} \frac{a_4}{a_0} \label{eq:c4}\\
c_6&=-\frac{3003}{1024} \left(\frac{a_1}{a_0}\right)^6 +\frac{3003}{256} \left(\frac{a_1}{a_0}\right)^4\frac{a_2}{a_0}
-\frac{693}{64} \left(\frac{a_1}{a_0}\right)^2\left(\frac{a_2}{a_0}\right)^2
+\frac{21}{16}\left(\frac{a_2}{a_0}\right)^3  
\nonumber\\
&\hspace*{5mm} 
-\frac{21}{32}\left(11\left(\frac{a_1}{a_0}\right)^3
-12\left(\frac{a_1}{a_0}\right)\left(\frac{a_2}{a_0}\right)
\right )\left(\frac{a_3}{a_0}\right)
- \frac{7}{8}
\left(\frac{a_3}{a_0}\right)^2 \nonumber\\
&\hspace*{5mm} 
+
\frac{7}{16}\left(9\left(\frac{a_1}{a_0}\right)^2 -4\frac{a_2}{a_0}\right)\frac{a_4}{a_0}
- \frac{7}{4}\frac{a_1}{a_0}\frac{a_5}{a_0} +\frac{1}{2} \frac{a_6}{a_0}. \label{eq:c6}
\end{align}
\end{lemma}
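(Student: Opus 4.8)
The plan is to extract both $\lambda_1$ and $|\lambda_n|$ from the single equation \eqref{eq:lambda_a}, namely $\lambda^2 = \sum_{k=0}^\infty a_k\lambda^{-k}$, by treating it as a fixed-point/asymptotic equation in the large parameter $a_0 \asymp n$, and then take the difference. The key observation is that \eqref{eq:lambda_a} is invariant in form under $\lambda \mapsto -\lambda$ only in its even part: writing $\lambda^2 = a_0 + a_1\lambda^{-1} + a_2\lambda^{-2} + \cdots$, one sees that the large positive root $\lambda_1$ and the large-in-absolute-value negative root $\lambda_n$ both satisfy $\lambda^2 \approx a_0$, and more precisely $\lambda_1 = \sqrt{a_0}\,\Phi(a_0^{-1/2})$ and $-\lambda_n = \sqrt{a_0}\,\Phi(-a_0^{-1/2}) \cdot(1+\cdots)$ — the point being that the odd-index coefficients $a_1, a_3, a_5$ contribute with opposite signs to $\lambda_1$ and to $-\lambda_n$, and hence \emph{add} when we form $S(G) = \lambda_1 + (-\lambda_n)$, while the even-index contributions cancel in a way that produces only even powers of $a_0^{-1/2}$ in the final formula. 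This is exactly why \eqref{eq:spreadapprox} has only the terms $2\sqrt{a_0}$, $2c_2/\sqrt{a_0}$, $2c_4/a_0^{3/2}$, $2c_6/a_0^{5/2}$ with the factor $2$ out front.

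\textbf{Step 1.} First I would set $\lambda = \sqrt{a_0}\,(1 + \sum_{k\ge 1} \beta_k a_0^{-k/2})$ as a formal Puiseux series in $a_0^{-1/2}$, substitute into \eqref{eq:lambda_a}, and solve recursively for the coefficients $\beta_k$ in terms of the ratios $a_j/a_0$. Since $a_j = O(a_0)$ for each fixed $j$ (indeed $a_j \le $ a polynomial in $t,s$ times $a_0$, because $l_j, r_j$ are controlled: $G[L]$ has $s-1$ vertices so $l_j = O(1)$, and $G[R]$ has bounded degree $\le t-1$ so $r_j \le r_0 (t-1)^j = O(n)$), each ratio $a_j/a_0$ is $O(1)$ and the series is a genuine asymptotic expansion. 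The recursion is: from $\lambda^2 - a_0 = \sum_{k\ge 1} a_k\lambda^{-k}$, expand both sides and match powers of $a_0^{-1/2}$; this determines $\beta_1, \beta_2, \ldots$ uniquely. I would carry this out to order $a_0^{-7/2}$ in $\lambda$, i.e.\ compute $\beta_1$ through $\beta_7$.

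\textbf{Step 2.} The root $\lambda_1$ corresponds to taking $\sqrt{a_0} > 0$, and the root $\lambda_n$ corresponds to the other large root; one checks $\lambda_n = -\sqrt{a_0}(1 + \sum_{k\ge 1}\beta_k'(-1)^? a_0^{-k/2})$ — concretely, substituting $\lambda = -\mu$ with $\mu>0$ large into \eqref{eq:lambda_a} gives $\mu^2 = \sum_k a_k(-1)^k\mu^{-k}$, which is \eqref{eq:lambda_a} with $a_k$ replaced by $(-1)^k a_k$. Hence $-\lambda_n = \sqrt{a_0}(1 + \sum_k \beta_k (-1)^k a_0^{-k/2})$ where $\beta_k$ are the \emph{same} polynomials in the ratios. (Here I need to invoke Lemma~\ref{lem:lambdan}, or rather the fact that $\lambda_1, |\lambda_n| > t-1$ from the discussion before Lemma~\ref{l1}, to guarantee these are the relevant roots and that the expansions are valid.) Then
\[
S(G) = \lambda_1 + (-\lambda_n) = \sqrt{a_0}\sum_{k\ge 0}\beta_k\bigl(1 + (-1)^k\bigr)a_0^{-k/2} = 2\sqrt{a_0}\sum_{j\ge 0}\beta_{2j}\,a_0^{-j},
\]
so all odd $\beta_k$ drop out and only even ones survive, with a factor of $2$. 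Setting $c_{2j} = a_0^{?}\cdot(\text{the right normalization of }\beta_{2j})$ — precisely $2c_2/\sqrt{a_0}$ is the $j=1$ term so $c_2 = \beta_2$, and $2c_4/a_0^{3/2}$ is the $j=2$ term so $c_4 = \beta_4$, etc.\ — one reads off \eqref{eq:c2}, \eqref{eq:c4}, \eqref{eq:c6}.

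\textbf{Step 3 (bookkeeping the truncation).} The tail $\sum_{k \ge 7} a_k\lambda^{-k}$ contributes $O(a_0^{-7/2})$ to $\lambda$ — but I must justify this, since the series $\sum a_k\lambda^{-k}$ has infinitely many terms and $a_k$ grows like $(t-1)^k a_0$. With $|\lambda| \ge \sqrt{a_0}/2$ for $n$ large, the tail is bounded by $a_0\sum_{k\ge 7}(2(t-1)/\sqrt{a_0})^k = O(a_0 \cdot a_0^{-7/2}) = O(a_0^{-5/2})$ in the equation, which after dividing by $2\lambda \asymp \sqrt{a_0}$ perturbs $\lambda$ by $O(a_0^{-3})$; propagating through the difference and the fact that the $a_0^{-3}$ term would be the next ($j=3$, $c_6$... wait, $c_6$ is already $j=3$) — I should track one more order to be safe, or note the error term in \eqref{eq:spreadapprox} is stated as $O(a_0^{-7/2})$ which matches the first omitted even term $2c_8/a_0^{7/2}$. \textbf{The main obstacle} is purely the algebraic bookkeeping: solving the recursion for $\beta_2, \beta_4, \beta_6$ produces increasingly long polynomial expressions in $a_1/a_0, \ldots, a_6/a_0$ (the $c_6$ formula already has eleven terms), and one must be careful that the convergence/truncation estimates in Step 3 are uniform in $n$ — this uses crucially that $t, s$ are fixed and that $A_R$ has spectral radius $\le t-1 < \sqrt{a_0}$, which is where the hypothesis "$n$ sufficiently large" and the $K_{1,t}$-minor-freeness of $G[R]$ (bounded degree) enter. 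There is no conceptual difficulty beyond carefully organizing a Lagrange-inversion / undetermined-coefficients computation and verifying the symmetry argument that kills the odd terms.
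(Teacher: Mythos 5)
Your proposal is correct and follows essentially the same route as the paper: set up the Puiseux expansion of $\lambda$ in powers of $a_0^{-1/2}$ from the fixed-point equation $\lambda^2 = \sum_k a_k\lambda^{-k}$, observe that the odd-order coefficients cancel and the even-order ones double in $\lambda_1-\lambda_n$, and compute $c_2, c_4, c_6$ by undetermined coefficients. The only difference is cosmetic: the paper imports the existence and sign-pattern of the expansion from the cited main lemma in the appendix of~\cite{LLLW2022} and delegates the coefficient computation to SageMath, whereas you rederive the expansion from scratch and obtain the sign pattern via the clean $\lambda\mapsto-\lambda$ substitution (which implicitly uses the weighted homogeneity of $\beta_k$ in the ratios $a_j/a_0$, with $a_j/a_0$ carrying weight $j$ — worth spelling out, but correct); your flag that the tail bound requires carrying the recursion slightly past $k=6$ to secure the $O(a_0^{-7/2})$ error is also a valid and careful observation.
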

\begin{proof}
    Recall that by \eqref{eq:lambda_a}, we have that for $\lambda \in \{\lambda_1, \lambda_n\}$,
    $$ \lambda^2 =  a_0 +
  \sum_{k=1}^\infty a_k\lambda^{-k}.  $$
  By the main lemma in the appendix of \cite{LLLW2022}, $\lambda$ has the following series expansion:
 $$\lambda_1 = \sqrt{a_0} + c_1 + \frac{c_2}{\sqrt{a_0}} + \frac{c_3}{a_0} + \frac{c_4}{a_0^{3/2}} + \frac{c_5}{a_0^2} + \frac{c_6}{a_0^{5/2}} + O\left(a_0^{-7/2}\right).$$
Similarly, 
  $$\lambda_n = -\sqrt{a_0} + c_1 - \frac{c_2}{\sqrt{a_0}} + \frac{c_3}{a_0} - \frac{c_4}{a_0^{3/2}} + \frac{c_5}{a_0^2} - \frac{c_6}{a_0^{5/2}} + O\left(a_0^{-7/2}\right).$$

Using SageMath, we get that $c_2, c_4, c_6$ are the values in Equations \eqref{eq:c2}, \eqref{eq:c4}, \eqref{eq:c6} respectively.
It follows that
$$S(G) =\lambda_1 - \lambda_n =  2\sqrt{a_0}+\frac{2c_2}{\sqrt{a_0}} + \frac{2c_4}{a_0^{3/2}} +  \frac{2c_6}{a_0^{5/2}} + O\left(a_0^{-7/2}\right).$$
\end{proof}

\begin{proof}[Proof of Theorem~\ref{thm:main}]
     Recall that by Lemma \ref{lem:approximation}, we have the following estimation of the spread of $G$:
\begin{equation}
    S(G)=2\sqrt{a_0}+\frac{2c_2}{\sqrt{a_0}} + \frac{2c_4}{(n-1)^{3/2}} +  \frac{2c_6}{(n-1)^{5/2}} + O\left(n^{-7/2}\right).
\end{equation}
where $c_2$, $c_4$ and $c_6$ are as in Lemma~\ref{lem:approximation}. 

Since $G$ is $K_{s,t}$-minor free, $G[R]$ is $K_{1,t}$-minor free. Thus the maximum degree of $G[R]$ is at most $t-1$. In particular,
$r_2\leq (t-1)r_1$. All $c_i$'s are bounded by constants depending on $t$.
Note that
\begin{align*}
    c_2 &= -\frac38 \left(\frac{a_1}{a_0}\right)^2 + \frac12 \frac{a_2}{a_0}\\
        &=  -\frac38 \left(\frac{l_1r_0+l_0r_1}{l_0r_0}\right)^2 + \frac12 \frac{l_2r_0+l_1r_1+l_0r_2}{r_0l_0}\\
         &=   -\frac38 \left(\frac{l_1}{l_0} +\frac{r_1}{r_0}\right)^2 + \frac12 \left(\frac{l_2}{l_0} +\frac{l_1}{l_0}\frac{r_1}{r_0}+\frac{r_2}{r_0}\right)\\
         &=-\frac38 \left(\frac{l_1}{3l_0} +\frac{r_1}{r_0}\right)^2 + \frac{l_2}{2l_0} - \frac{l_1^2}{3l_0^2} +\frac{r_2}{2r_0}\\
         &= \frac{(t-1)^2}{6}  -\frac38 \left(\frac{l_1}{3l_0} +\frac{r_1}{r_0}-\frac{2}{3}(t-1)\right)^2 + \frac{l_2}{2l_0}-\frac{(t-1)l_1}{6l_0} - \frac{l_1^2}{3l_0^2}
         +\frac{r_2-(t-1)r_1}{2r_0}\\
          &= \frac{(t-1)^2}{6}  -\frac38 \left(\frac{l_1}{3l_0}+\frac{r_1}{r_0}-\frac{2}{3}(t-1)\right)^2
          + \frac{\psi(G[L])}{6l_0}
         +\frac{r_2-(t-1)r_1}{2r_0} \\
       &\leq \frac{(t-1)^2}{6}+  \frac{\psi(L_{max})}{6l_0}.
        \end{align*}
At the last step, the equality holds only if
\begin{enumerate}
    \item $\psi(L)=\psi(L_{max})$,
    \item $r_2=(t-1)r_1$,
    \item $\frac{l_1}{3l_0}+\frac{r_1}{r_0}-\frac{2}{3}(t-1)=0$.
\end{enumerate} 

Thus, we have $$S(G)\leq  2\sqrt{a_0} + \frac{(t-1)^2+ \psi(L_{max})/(s-1)}{3\sqrt{a_0}} + O\lp\frac{1}{n^{3/2}}\rp.$$
This upper bound is asymptotically tight. 
Consider $G_0=L_{max}\vee  \left( \ell_0 K_t \cup \left(n-s+1- t\ell_0\right)  P_1\right)$
where $\ell_0$ is an integer such that  $\frac{l_1}{3l_0}+\frac{r_1}{r_0}-\frac{2}{3}(t-1)$ is close to zero.
Thus
$$S(G_0) = 2\sqrt{a_0} + \frac{(t-1)^2+ \psi(L_{max})/(s-1)}{3\sqrt{a_0}} + O\lp\frac{1}{n^{3/2}}\rp.$$

\begin{claim}
$G[L]=L_{max}$.
\end{claim}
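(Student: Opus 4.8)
The plan is to prove the claim by contradiction, assuming $G[L] \neq L_{max}$, and showing that we can then find a $K_{s,t}$-minor-free graph with strictly larger spread than $G$. The natural candidate for comparison is $G_0 = L_{max}\vee \left(\ell_0 K_t \cup (n-s+1-t\ell_0)P_1\right)$ with $\ell_0$ chosen (as in part (2) of Theorem~\ref{thm:main}) so that the quantity $\frac{l_1}{3l_0}+\frac{r_1}{r_0}-\frac{2}{3}(t-1)$ is as close to zero as an integer choice of $\ell_0$ allows, which makes the middle (squared) term in the displayed bound for $c_2$ of order $O(1/n^2)$ and hence negligible at the scale $1/\sqrt{a_0}$. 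First I would record that $G_0$ is indeed $K_{s,t}$-minor-free: since each component of the ``$R$-part'' is a $K_t$ or an isolated vertex, contracting any connected subgraph of $R$ together with at most $s-1$ apex vertices cannot produce a branch set structure realizing $K_{s,t}$ — the $t$-side vertices of a putative $K_{s,t}$-minor would all need to sit in $R$ and be pairwise adjacent or connected through $R$, but the components of $G_0[R]$ have only $t$ vertices each.

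The core of the argument is the comparison of the two spread expansions via Lemma~\ref{lem:approximation}. For $G$ we have $S(G) = 2\sqrt{a_0} + \frac{2c_2(G)}{\sqrt{a_0}} + O(n^{-3/2})$ and the chain of (in)equalities already derived in the proof of Theorem~\ref{thm:main} shows
\[
c_2(G) = \frac{(t-1)^2}{6} - \frac38\left(\frac{l_1}{3l_0}+\frac{r_1}{r_0}-\frac{2}{3}(t-1)\right)^2 + \frac{\psi(G[L])}{6l_0} + \frac{r_2-(t-1)r_1}{2r_0},
\]
where every displayed correction term is $O(1/n)$ relative to the leading $\frac{(t-1)^2}{6}$, and the last two terms are both $\le 0$ (the second because the maximum degree of $G[R]\subseteq K_{1,t}$-minor-free is at most $t-1$, so $r_2 \le (t-1)r_1$). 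For $G_0$, by construction $\psi(G[L]) $ is replaced by $\psi(L_{max})$, the squared term is $O(1/n^2)$, and $G_0[R]$ is $(t-1)$-regular plus isolated vertices so $r_2 = (t-1)r_1$ exactly; hence $c_2(G_0) = \frac{(t-1)^2}{6} + \frac{\psi(L_{max})}{6(s-1)} + O(1/n)$ where the $O(1/n)$ error is genuinely smaller than the corresponding slack in $c_2(G)$. Thus if $\psi(G[L]) < \psi(L_{max})$ — which holds unless $G[L]=L_{max}$, by definition of $L_{max}$ as the maximizer of $\psi$ over the finite set of graphs on $s-1$ vertices — then $c_2(G) < c_2(G_0) - \Omega(1/n)$ is impossible; more carefully, since $\psi$ takes only finitely many values, $\psi(L_{max}) - \psi(G[L]) \ge \delta$ for some constant $\delta = \delta(s,t) > 0$ whenever $G[L]\neq L_{max}$, giving $c_2(G) \le c_2(G_0) - \frac{\delta}{6(s-1)} + O(1/n) < c_2(G_0)$ for $n$ large. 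This forces $S(G) < S(G_0)$, contradicting the maximality of $S(G)$, so $G[L] = L_{max}$.

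The main obstacle I anticipate is bookkeeping the error terms carefully enough to be sure the constant gain $\frac{\delta}{6(s-1)}$ at order $1/\sqrt{a_0}$ genuinely dominates: one must confirm that the $O(1/n)$ terms hidden inside $c_2$ (coming from $r_2 - (t-1)r_1$, from the squared discrepancy term, from the $l_1^2/l_0^2$ and $(t-1)l_1/l_0$ pieces, and from the difference between $\lfloor (n-s+1)/t\rfloor$-type quantities and their real-valued optima) are truly $O(1/n)$ with constants independent of which graph $G$ we started with, and that the higher-order contributions $c_4/a_0^{3/2}$, $c_6/a_0^{5/2}$ are $O(n^{-3/2})$ uniformly — this last point uses that all $a_k = \sum_j l_j r_{k-j}$ are bounded by constants depending only on $t$, since $l_k \le (s-1)(s-2)^{k-1}\cdot(s-1)$ trivially and $r_k \le r_0 (t-1)^k$ as $G[R]$ has maximum degree $\le t-1$, but actually we need $r_k/a_0$ bounded, i.e. $r_k = O(n)$, which follows from $r_k \le (t-1)^{k-1} r_1 = O(n)$ since $G[R]$ is $K_{1,t}$-minor-free hence has $O(n)$ edges. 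Once these uniformity statements are in hand, the comparison is immediate, and I would also remark that this same argument pins down $\ell_0$ to within the stated $O(n^\epsilon)$ window, since any $\ell_0$ making the squared term larger than $\omega(1/n)$ would itself be beaten by the optimal choice.
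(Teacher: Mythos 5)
Your proposal is correct and is essentially the same argument as the paper's: compare $G$ against $G_0 = L_{max}\vee(\ell_0 K_t \cup (n-s+1-t\ell_0)P_1)$ via the $c_2$ term in the spread expansion and use the fact that $\psi$ takes values in a discrete set (the paper notes $\psi(G[L]) \le \psi(L_{max}) - \tfrac{1}{s-1}$; you invoke finiteness to get a gap $\delta > 0$, which is equivalent) so that the resulting $\Omega(1/\sqrt{n})$ deficit in the spread dominates the $O(n^{-3/2})$ error. The one small blemish is the garbled phrase ``$c_2(G) < c_2(G_0) - \Omega(1/n)$ is impossible,'' but the ``more carefully'' clause immediately following supplies the correct reasoning, so the proof stands.
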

\begin{proof} 
Otherwise, we have $\psi(G[L])<\psi(L_{max})$. Also, by the definition of $\psi$, we have that $\psi(G[L])\leq \psi(L_{max})-\frac{1}{s-1}$. It then follows that for sufficiently large $n$,
\begin{align*}
S(G)&\leq  2\sqrt{a_0} + \frac{(t-1)^2+ \psi(G[L])/(s-1)}{3\sqrt{a_0}} + O\lp\frac{1}{n^{3/2}}\rp\\
&<  2\sqrt{a_0} + \frac{(t-1)^2+ \psi(L_{max})/(s-1)}{3\sqrt{a_0}} + O\lp\frac{1}{n^{3/2}}\rp \\
&=S(G_0),
\end{align*}
giving a contradiction.
\end{proof}

\begin{claim}\label{cl:Asmall}
There is a constant $C$ such that the value of $\frac{l_1}{3l_0} +\frac{r_1}{r_0}$ that maximizes $S(G)$ lies in the interval $\lp\frac{2}{3}(t-1)- Cn^{-1/2}, \frac{2}{3}(t-1)+ Cn^{-1/2}\rp$.
\end{claim}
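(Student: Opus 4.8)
The plan is to compare the maximiser $G$ against the explicit graph $G_0=L_{max}\vee\lp \ell_0 K_t\cup(n-s+1-t\ell_0)P_1\rp$ already introduced in the proof of Theorem~\ref{thm:main} (which is $K_{s,t}$-minor-free, so $S(G)\ge S(G_0)$), and to read the claim off from the asymptotic expansion of Lemma~\ref{lem:approximation}. The preliminary observation is that this expansion is \emph{uniform} over all competitors: since $G[R]$ is $K_{1,t}$-minor-free, $\Delta(G[R])\le t-1$, so $r_k\le r_0(t-1)^k=O(n)$ and $l_k\le l_0(s-2)^k=O(1)$ with constants depending only on $s,t$; hence $a_k=\sum_{j=0}^k l_j r_{k-j}=O(n)$, every ratio $a_k/a_0$ is $O(1)$, and $a_0=l_0r_0=(s-1)(n-s+1)$ is the \emph{same} number for every competitor. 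Consequently $S(H)=2\sqrt{a_0}+\frac{2c_2(H)}{\sqrt{a_0}}+O(n^{-3/2})$ holds with an absolute implied constant for every $K_{s,t}$-minor-free $H$ containing $K_{s-1,n-s+1}$.

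Write $A:=\frac{l_1}{3l_0}+\frac{r_1}{r_0}-\frac23(t-1)$. First I would record, from the identity for $c_2$ derived in the proof of Theorem~\ref{thm:main} together with the facts $G[L]=L_{max}$ (the Claim proved there) and $r_2\le(t-1)r_1$, the one-sided bound
\[ c_2(G)\ \le\ \frac{(t-1)^2}{6}+\frac{\psi(L_{max})}{6(s-1)}-\frac38 A^2 . \]
Next I would compute the same quantity for $G_0$: there $l_1=2|E(L_{max})|$ is a constant, $r_1=\ell_0 t(t-1)$, $r_2=\ell_0 t(t-1)^2=(t-1)r_1$, and $\ell_0$ is chosen to be an integer within $O(1)$ of the real value making $A$ vanish, so $A(G_0)=O(1/n)$ and hence
\[ c_2(G_0)\ =\ \frac{(t-1)^2}{6}+\frac{\psi(L_{max})}{6(s-1)}-O(n^{-2}) . \]

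The conclusion then drops out: from $S(G)\ge S(G_0)$ and the uniform expansion, $\frac{2}{\sqrt{a_0}}\lp c_2(G)-c_2(G_0)\rp\ge -O(n^{-3/2})$, i.e.\ $c_2(G)-c_2(G_0)\ge -O(n^{-1})$; substituting the two displayed estimates yields $\frac38 A^2\le O(n^{-1})$, that is $|A|<Cn^{-1/2}$ for a suitable constant $C$, which is precisely the assertion of the claim.

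The only step that genuinely requires care is the uniformity invoked in the first paragraph — that the error term $O(n^{-3/2})$ in Lemma~\ref{lem:approximation} holds with a single constant across the whole family, not merely for one fixed graph. This follows because $\min(\lambda_1(H),|\lambda_n(H)|)\ge\sqrt{a_0}-O(1)$ grows without bound, so it eventually exceeds $t-1\ge\lambda_1(A_R)$ by a margin tending to infinity, forcing the geometric series of Lemma~\ref{l1} to converge at a rate uniform in $n$; combined with the polynomial-in-$n$ bounds on the $a_k$ with uniform constants, the tail estimate from the main lemma of the appendix of \cite{LLLW2022} is uniform as well. Once this is in place the comparison above is a one-line computation, so I do not anticipate any serious obstacle.
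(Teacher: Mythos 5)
Your proposal is correct and follows essentially the same route as the paper: both arguments start from the identity $c_2 = \frac{(t-1)^2}{6} - \frac{3}{8}A^2 + \frac{\psi(G[L])}{6l_0} + \frac{r_2-(t-1)r_1}{2r_0}$, compare against $c_2(G_0)$ via the expansion $S = 2\sqrt{a_0} + \tfrac{2c_2}{\sqrt{a_0}} + O(n^{-3/2})$, and read off that $|A|$ must be $O(n^{-1/2})$ lest $S(G) < S(G_0)$. The paper phrases this as a contradiction argument with a chosen constant $C$ while you run it directly; the only genuine addition is your explicit discussion of uniformity of the error term across the competitor family, which the paper leaves implicit but is a valid and worthwhile point to record.
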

\begin{proof}
Otherwise, for any $\frac{l_1}{3l_0} +\frac{r_1}{r_0}$ not in this interval (where $C$ is chosen later), we have
\begin{align*}
    c_2 & \leq \frac{(t-1)^2}{6} - \frac{3}{8} C^2 n^{-1}+ \frac{\psi(L_{max})}{6l_0}
         +\frac{r_2-(t-1)r_1}{2r_0} \\
        & \leq \frac{(t-1)^2}{6} - \frac{3}{8} C^2 n^{-1}+ \frac{\psi(L_{max})}{6l_0}.
\end{align*}
This implies that 
$$S(G) -S(G_0) \leq -\frac{\frac{3}{4} C^2 n^{-1}}{\sqrt{a_0}} + O\lp \frac{1}{n^{3/2}}\rp < 0,$$
giving a contradiction when $C$ is chosen to be large enough such that
$- \frac{\frac{3}{4} C^2 n^{-1}}{\sqrt{a_0}} + O\lp \frac{1}{n^{3/2}}\rp < 0.$
\end{proof}

Hence from now on, we assume that $\frac{l_1}{3l_0} +
 \frac{r_1}{r_0}\in \lp\frac{2}{3}(t-1)- Cn^{1/2}, \frac{2}{3}(t-1)+ Cn^{1/2}\rp$.

\begin{claim}\label{cl:Asmall}
There is a constant $C_2$ such that the value of $r_2$ lies in the interval $[(t-1)r_1-C_2, (t-1)r_1]$.
\end{claim}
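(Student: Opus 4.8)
The plan is to derive both endpoints of the interval from the degree structure of $G[R]$ together with the expansion of Lemma~\ref{lem:approximation}, exactly in the style of the previous claim. First I would dispose of the upper bound $r_2\le(t-1)r_1$, which requires nothing new: since every vertex of $L$ is joined to every vertex of $R$ and $G$ is $K_{s,t}$-minor-free, $G[R]$ is $K_{1,t}$-minor-free, so $\Delta(G[R])\le t-1$; hence
\[
r_2=\sum_{v\in R}d_{G[R]}(v)^2\le(t-1)\sum_{v\in R}d_{G[R]}(v)=(t-1)r_1 .
\]

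For the left endpoint I would argue by contradiction: suppose $r_2<(t-1)r_1-C_2$ for a constant $C_2$ to be fixed later. Using the identity for $c_2$ established just before this claim and the fact (already proved) that $G[L]=L_{max}$, and discarding the nonpositive squared term,
\begin{align*}
c_2&=\frac{(t-1)^2}{6}-\frac38\left(\frac{l_1}{3l_0}+\frac{r_1}{r_0}-\frac23(t-1)\right)^2+\frac{\psi(L_{max})}{6l_0}+\frac{r_2-(t-1)r_1}{2r_0}\\
&\le \frac{(t-1)^2}{6}+\frac{\psi(L_{max})}{6l_0}-\frac{C_2}{2r_0}.
\end{align*}
Plugging this into Lemma~\ref{lem:approximation} and recalling $l_0=s-1$, $r_0=n-s+1$, $a_0=(s-1)(n-s+1)$, I would obtain
\begin{align*}
S(G)&=2\sqrt{a_0}+\frac{2c_2}{\sqrt{a_0}}+O(n^{-3/2})\\
&\le 2\sqrt{a_0}+\frac{(t-1)^2+\psi(L_{max})/(s-1)}{3\sqrt{a_0}}-\frac{C_2}{r_0\sqrt{a_0}}+O(n^{-3/2}).
\end{align*}
Comparing with $G_0=L_{max}\vee\left(\ell_0K_t\cup(n-s+1-t\ell_0)P_1\right)$ from the proof of Theorem~\ref{thm:main}, which is $K_{s,t}$-minor-free and satisfies $S(G_0)=2\sqrt{a_0}+\frac{(t-1)^2+\psi(L_{max})/(s-1)}{3\sqrt{a_0}}+O(n^{-3/2})$, this yields
\[
S(G)-S(G_0)\le -\frac{C_2}{r_0\sqrt{a_0}}+O(n^{-3/2}).
\]

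Now $\frac{C_2}{r_0\sqrt{a_0}}=\Theta(C_2 n^{-3/2})$, and the point that needs care — the only real obstacle — is uniformity: the implied constant in every $O(n^{-3/2})$ above depends on $s,t$ only, since $a_1,\dots,a_6$ and therefore $c_2,c_4,c_6$ are bounded in terms of $t$ alone via $\Delta(G[R])\le t-1$ and Corollary~\ref{cor:Mader}. Hence taking $C_2$ a sufficiently large constant (depending only on $s$ and $t$) makes $S(G)-S(G_0)<0$ for all large $n$, contradicting the maximality of $S(G)$; so $r_2\ge(t-1)r_1-C_2$, which together with the first paragraph finishes the proof. Beyond this bookkeeping the argument is identical in structure to the preceding claim.
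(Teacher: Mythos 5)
Your proof is correct and follows the same route as the paper: the upper bound $r_2\le(t-1)r_1$ comes immediately from $\Delta(G[R])\le t-1$, and the lower bound is obtained by assuming $r_2<(t-1)r_1-C_2$, plugging into the $c_2$ identity to get $S(G)-S(G_0)\le -C_2/(r_0\sqrt{a_0})+O(n^{-3/2})$, and choosing $C_2$ large to force a contradiction. Your explicit remark that the implied constant in $O(n^{-3/2})$ depends only on $s,t$ (not on $C_2$), so the two $\Theta(n^{-3/2})$ terms really can be made to have a negative sum, is exactly the uniformity point the paper leaves implicit, and it is handled correctly.
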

\begin{proof}
Otherwise, we assume $r_2<(t-1)r_1-C_2$ for some $C_2$ chosen later. We then have
$$
S(G)\leq  2\sqrt{a_0} + \frac{(t-1)^2+ \psi(L_{max})/(s-1)}{3\sqrt{a_0}} 
-\frac{C_2}{r_0\sqrt{a_0}}
+ O\lp\frac{1}{n^{3/2}}\rp< S(G_0),
$$
when $C_2$ is chosen to be sufficiently large, giving a contradiction.
\end{proof}

\begin{claim}\label{cl:Asmall}
For $i\geq 3$, we have $r_i\in [(t-1)^{i-1}(r_1-(i-1)C_2),r_1(t-1)^{i-1}] $.
\end{claim}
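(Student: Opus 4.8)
The plan is to prove the claim by induction on $i$, taking the preceding claim as the base case: since $t\ge 2$, that claim gives
\[
(t-1)(r_1-C_2)\;\le\;(t-1)r_1-C_2\;\le\;r_2\;\le\;(t-1)r_1 ,
\]
which is exactly the asserted two‑sided bound at $i=2$. For the inductive step I would show that it is enough to establish the single estimate
\[
(t-1)r_i-(t-1)^iC_2\;\le\; r_{i+1}\;\le\;(t-1)r_i ,
\]
because feeding in the inductive hypothesis $r_i\in\bigl[(t-1)^{i-1}(r_1-(i-1)C_2),\,(t-1)^{i-1}r_1\bigr]$ yields at once $r_{i+1}\le(t-1)^ir_1$ and $r_{i+1}\ge(t-1)^i(r_1-(i-1)C_2)-(t-1)^iC_2=(t-1)^i(r_1-iC_2)$, which is the claim for $i+1$.

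To prove that estimate I would use that $G[R]$ is $K_{1,t}$‑minor‑free and hence has maximum degree at most $t-1$. Writing $w^{(i)}:=A_R^i\one$ for the vector whose $u$‑entry counts the length‑$i$ walks of $G[R]$ starting at $u$, a one‑line induction gives $\|w^{(i)}\|_\infty\le(t-1)^i$. Since $A_R$ is symmetric and $A_R\one$ is the degree vector of $G[R]$, one has $r_i=\one'w^{(i)}=\sum_{u\in R}w^{(i)}_u$ and $r_{i+1}=(A_R\one)'w^{(i)}=\sum_{u\in R}d_{G[R]}(u)\,w^{(i)}_u$, so that
\[
(t-1)r_i-r_{i+1}\;=\;\sum_{u\in R}\bigl(t-1-d_{G[R]}(u)\bigr)w^{(i)}_u\;\ge\;0 ,
\]
which already gives the upper bound. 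For the lower bound I would observe that each summand vanishes when $d_{G[R]}(u)=t-1$ and also when $u$ is isolated in $G[R]$ (then $w^{(i)}_u=0$ for $i\ge1$), so only the ``bad'' set $B:=\{u\in R:1\le d_{G[R]}(u)\le t-2\}$ contributes; bounding $w^{(i)}_u\le(t-1)^i$ on $B$ and using $t-1-d_{G[R]}(u)\le\bigl(t-1-d_{G[R]}(u)\bigr)d_{G[R]}(u)$ for $u\in B$ gives
\[
(t-1)r_i-r_{i+1}\;\le\;(t-1)^i\sum_{u\in R}\bigl(t-1-d_{G[R]}(u)\bigr)d_{G[R]}(u)\;=\;(t-1)^i\bigl((t-1)r_1-r_2\bigr)\;\le\;(t-1)^iC_2
\]
by the preceding claim, completing the induction.

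I do not expect a genuine obstacle: the only content is the bookkeeping that localizes the ``defect'' $(t-1)r_i-r_{i+1}$ onto the set $B$, whose total degree deficit $\sum_{u\in B}\bigl(t-1-d_{G[R]}(u)\bigr)$ is already controlled by $(t-1)r_1-r_2\le C_2$. The points requiring care are that the constant $C_2$ supplied by the preceding claim propagates through the induction unchanged (so the lower bound is genuinely $(t-1)^{i-1}(r_1-(i-1)C_2)$ with that same $C_2$), and that this lower bound has content only while $r_1>(i-1)C_2$; for larger $i$ — in particular beyond the finite range $i\le 6$ that the later expansion of $S(G)$ actually uses — the asserted lower bound is non‑positive and the claim is trivial because $r_i\ge0$.
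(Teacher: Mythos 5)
Your proof is correct and takes essentially the same route as the paper's: you bound the single-step defect $(t-1)r_i - r_{i+1} = \sum_{u\in R}(t-1-d_{G[R]}(u))w^{(i)}_u$ by localizing it onto the set $B$ of vertices with intermediate degree $1\le d_{G[R]}(u)\le t-2$, use the walk bound $w^{(i)}_u\le(t-1)^i$, and invoke the preceding claim's control $\sum_{u\in B}(t-1-d_{G[R]}(u))d_{G[R]}(u)=(t-1)r_1-r_2\le C_2$; the paper does the identical thing but phrases the last step via $|R'|\le C_2/(t-2)$ rather than by inserting the extra factor $d_{G[R]}(u)\ge 1$. The unrolled induction and the elementary upper bound $r_{i+1}\le(t-1)r_i$ match the paper's as well.
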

\begin{proof}
Let $R'$ be the set of vertices in $R$ such that its degree is in the interval $[1,t-2]$. We have
$$C_2\geq (t-1)r_1-r_2=\sum_{v\in R'}(t-1-d(v))d(v)\geq (t-2)|R'|.$$
This implies 
$$|R'|\leq \frac{C_2}{t-2}.$$
We have
\begin{equation}
    (t-1)r_{i-1}-r_i\leq  (t-2)|R'|(t-1)^{i-1} \leq C_2(t-1)^{i-1}.
\end{equation}
Thus,
\begin{align*}
    r_i&\geq (t-1)r_{i-1} -C_2(t-1)^{i-1}\\
     &\geq (t-1)((t-1)r_{i-2} -C_2(t-1)^{i-2}) -C_2(t-1)^{i-1} \hspace*{2cm}\mbox{ by induction}\\
     &=(t-1)^2r_{i-2}-2C_2(t-1)^{i-1}\\
     &\geq \cdots\\
     &\geq (t-1)^{i-1}r_{1} -(i-1)C_2(t-1)^{i-1}.
\end{align*}
\end{proof}

\begin{claim}\label{cl:a1a2}
    $r_2 = (t-1) r_1$ and $\frac{l_1}{3l_0}+\frac{r_1}{r_0}-\frac{2}{3}(t-1)=O(n^{-(1-\epsilon)})$ for any given $\epsilon>0$.
\end{claim}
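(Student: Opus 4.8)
The plan is to prove both assertions at once by comparing the maximizer $G$ against the explicit graph $G_0=L_{max}\vee\paren{\ell_0K_t\cup(n-s+1-t\ell_0)P_1}$, where $\ell_0$ is a nearest integer to $\frac{n-s+1}{t(t-1)}\paren{\tfrac23(t-1)-\tfrac{l_1}{3l_0}}$; this is the same competitor already used in the proof of Theorem~\ref{thm:main}, and it is $K_{s,t}$-minor-free. Write $\delta:=\frac{l_1}{3l_0}+\frac{r_1}{r_0}-\frac23(t-1)$ and $\gamma:=(t-1)r_1-r_2$. By the claim that $r_2\in[(t-1)r_1-C_2,(t-1)r_1]$ we have $0\le\gamma\le C_2$ with $\gamma$ an integer, and by the claim that $\frac{l_1}{3l_0}+\frac{r_1}{r_0}$ lies within $O(n^{-1/2})$ of $\frac23(t-1)$ we have $|\delta|=O(n^{-1/2})$. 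Since $G[L]=L_{max}$ has already been established, the computation of $c_2$ in the proof of Theorem~\ref{thm:main} yields the exact identity
$$c_2(G)=\frac{(t-1)^2}{6}+\frac{\psi(L_{max})}{6l_0}-\frac38\delta^2-\frac{\gamma}{2r_0}.$$
For $G_0$ one has $\gamma_0=0$ and, by the choice of $\ell_0$, $\delta_0=O(1/n)$; since $l_0,l_1,r_0$ and $\psi(L_{max})$ depend only on $L=L_{max}$, the same identity gives $c_2(G)-c_2(G_0)=-\frac38\delta^2-\frac{\gamma}{2r_0}+O(n^{-2})$.

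Next I would compare the spreads. Substituting the two series from Lemma~\ref{lem:approximation}, and using the common value $a_0=(s-1)(n-s+1)$ for $G$ and $G_0$, one obtains
$$S(G)-S(G_0)=\frac{2(c_2(G)-c_2(G_0))}{\sqrt{a_0}}+\frac{2(c_4(G)-c_4(G_0))}{a_0^{3/2}}+O(n^{-5/2}).$$
The crux is to show $c_4(G)-c_4(G_0)=O(|\delta|+n^{-1})$. Since $G[R]$ has maximum degree at most $t-1$, the quantities $a_1/a_0,\dots,a_4/a_0$ are all bounded by a constant depending only on $s$ and $t$, so the explicit polynomial $c_4$ is Lipschitz on the region in which these quantities live. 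Using the structural estimate $r_i=r_1(t-1)^{i-1}-O(1)$ for $G$ (the claim that $r_i\in[(t-1)^{i-1}(r_1-(i-1)C_2),\,r_1(t-1)^{i-1}]$), the exact identity $r_i(G_0)=r_1(G_0)(t-1)^{i-1}$, and $r_1-r_1(G_0)=r_0(\delta-\delta_0)$, one checks that $|a_i(G)-a_i(G_0)|/a_0=O(|\delta|+n^{-1})$ for $1\le i\le 4$, which gives the Lipschitz bound on $c_4$. Hence
$$S(G)-S(G_0)=-\frac{\tfrac34\delta^2+\gamma/r_0}{\sqrt{a_0}}+O(|\delta|\,n^{-3/2})+O(n^{-5/2}).$$

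Finally, since $G$ maximizes the spread over all $n$-vertex $K_{s,t}$-minor-free graphs and $G_0$ is one such graph, $S(G)\ge S(G_0)$, so the right-hand side above is nonnegative. Multiplying by $\sqrt{a_0}=\Theta(\sqrt n)$ and rearranging gives $\tfrac34\delta^2+\gamma/r_0\le O(|\delta|n^{-1})+O(n^{-2})$; since $|\delta|=O(n^{-1/2})$ this forces $\gamma/r_0=O(n^{-3/2})$, i.e.\ $\gamma=O(n^{-1/2})$, and being a nonnegative integer $\gamma$ must equal $0$ once $n$ is large, which is exactly $r_2=(t-1)r_1$. Plugging $\gamma=0$ back in leaves the quadratic inequality $\tfrac34\delta^2\le O(|\delta|n^{-1})+O(n^{-2})$ in $|\delta|$, whose solution is $|\delta|=O(n^{-1})$, hence in particular $|\delta|=O(n^{-(1-\epsilon)})$ for every $\epsilon>0$. (Should the perturbation bound for $c_4$ be available only in the cruder form $c_4(G)-c_4(G_0)=O(n^{-\alpha})$ with $\alpha<1$, one instead iterates: each use of $S(G)\ge S(G_0)$ upgrades the exponent in $|\delta|=O(n^{-\alpha})$ towards $1$, reaching any prescribed exponent below $1$ after finitely many rounds.)

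The step I expect to be the main obstacle is the perturbation estimate $c_4(G)-c_4(G_0)=O(|\delta|+n^{-1})$: one must verify that passing from $G$ to the idealized $G_0$ perturbs each of $a_1/a_0,\dots,a_4/a_0$ by at most $O(|\delta|+n^{-1})$, and this is precisely where the structural control of the higher walk counts $r_2,r_3,\dots$ supplied by the preceding claims is indispensable; the analogous estimate for $c_6$ is required only to absorb its contribution into the $O(n^{-5/2})$ error.
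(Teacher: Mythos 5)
Your proposal is correct and is essentially the comparison argument the paper uses: express $c_2(G)-c_2(G_0)$ in terms of $\delta$ and $\gamma$, invoke $S(G)\ge S(G_0)$, and control the $c_4$-level correction. The only substantive difference is that you establish the sharper Lipschitz estimate $c_4(G)-c_4(G_0)=O(|\delta|+n^{-1})$ directly from the preceding walk-count claims (which gives $|\delta|=O(n^{-1})$ in one pass), whereas the paper records the cruder $c_4(G)-c_4(G_0)=O(|\delta|)=O(n^{-1/2})$ and bootstraps the exponent toward $1$ through finitely many iterations --- exactly the fallback you describe at the end.
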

\begin{proof}
Assume that $\frac{l_1}{3l_0}+\frac{r_1}{r_0}-\frac{2}{3}(t-1) = A$, and $r_2 = (t-1)r_1-B$, where $A \in [-Cn^{-1/2}, Cn^{-1/2}]$ and $0 \leq B\leq C_2$.
It follows that 
\begin{align*}
    c_2(G) -c_2(G_0)  &= O\lp n^{-2}\rp
    - \frac{3A^2}{8} - \frac{B}{2r_0},
\end{align*}    
and
\begin{align*}
     c_4(G) - c_4(G_0) =O(n^{-1/2}).
\end{align*}
Thus 
\begin{align*}
    S(G) -S(G_0) &= 2\frac{c_2(G)-c_2(G_0)}{\sqrt{a_0}}
+ 2\frac{c_4(G)-c_4(G_0)}{a_0^{3/2}} +\lp a_0^{-5/2}\rp\\
&\leq 2\frac{ O\lp n^{-2}\rp
    - \frac{3A^2}{8} - \frac{B}{2r_0}}{\sqrt{a_0}}
+ 2\frac{O(n^{-1/2})}{a_0^{3/2}} +\lp a_0^{-5/2}\rp.
\end{align*}
This implies $B=0$ and $A=O(n^{-3/4})$.

Notice that $A=O(n^{-3/4})$ implies $c_4(G)-c_4(G_0)=O(n^{-1/4})$, which implies
$A=O(n^{-7/8})$. Iterate this process finitely many times. We get
$A=O(n^{-(1-\epsilon)})$ for any given $\epsilon>0$.
\end{proof}

\begin{claim}
    $G[R]$ is the union of vertex disjoint $K_t$s and isolated vertices. 
\end{claim}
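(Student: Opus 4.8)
The plan is to turn the identity $r_2=(t-1)r_1$ (established above) into exact degree information on $G[R]$, and then to rule out large components by building a forbidden minor. Since every vertex of $L$ is adjacent to all of $R$, the graph $G[R]$ is $K_{1,t}$-minor-free, so $\Delta(G[R])\le t-1$. Writing $d(v):=d_{G[R]}(v)$, one has $r_1=\sum_{v\in R}d(v)$ and $r_2=\one'A_R^2\one=\sum_{v\in R}d(v)^2$, so $r_2=(t-1)r_1$ reads $\sum_{v\in R}d(v)\bigl(t-1-d(v)\bigr)=0$, a sum of nonnegative terms; hence $d(v)\in\{0,t-1\}$ for every $v\in R$. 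Thus $G[R]=G_1\sqcup\overline{K}_j$, where $G_1:=G[R^{\bullet}]$ is $(t-1)$-regular on $R^{\bullet}:=\{v\in R:d(v)=t-1\}$ and the $j$ vertices of $R\setminus R^{\bullet}$ are isolated in $G[R]$ (so each has $N_G(\cdot)=L$). Each component of $G_1$ is connected and $(t-1)$-regular, hence has at least $t$ vertices and equals $K_t$ precisely when it has exactly $t$; so it remains to prove that no component of $G_1$ has more than $t$ vertices. This is vacuous for $t=2$, so assume $t\ge 3$.

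Next I would use that the extremal graph has many isolated vertices in $R$: by Theorem~\ref{thm:main}, $|R^{\bullet}|=t\ell_0=\bigl(\tfrac23-\tfrac{2|E(L_{max})|}{3(t-1)(s-1)}\bigr)(n-s+1)+O(n^{\epsilon})$, and since $|E(L_{max})|\le\binom{s-1}{2}$ and $t\ge s$, this is $\le\tfrac23(n-s+1)+O(n^{\epsilon})$, whence $j=(n-s+1)-|R^{\bullet}|\ge s-1$ for $n$ large; fix distinct $p_1,\dots,p_{s-1}\in R\setminus R^{\bullet}$. Now suppose for contradiction that some component $C$ of $G_1$ has $|V(C)|\ge t+1$. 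The key fact I would isolate is: \emph{every connected $(t-1)$-regular graph $C$ with $|V(C)|\ge t+1$ contains $K_{2,t-1}$ as a minor.}

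Granting this, fix branch sets $X_1,X_2,Y_1,\dots,Y_{t-1}\subseteq V(C)$ realizing a $K_{2,t-1}$-minor, with $X_1,X_2$ on the size-two side, and put $M_i:=\{u_i,p_i\}$ for $1\le i\le s-1$. Each $M_i$ induces an edge, is disjoint from $V(C)$ (because $p_i\notin R^{\bullet}$) and from the other $M_{i'}$, and — since $u_i$ is adjacent to all of $V(C)$ while $p_i$ is adjacent to every $u_{i'}$ — $M_i$ is adjacent to every $M_{i'}$, every $X_k$ and every $Y_\ell$. Then the $s+t$ pairwise disjoint connected sets
$$\bigl\{M_1,\dots,M_{s-2},\,X_1,\,X_2\bigr\}\ \ (\text{$s$-side}),\qquad \bigl\{M_{s-1},\,Y_1,\dots,Y_{t-1}\bigr\}\ \ (\text{$t$-side})$$
form a $K_{s,t}$-minor of $G$: $M_i\sim M_{s-1}$ and $M_i\sim Y_\ell$ for $i\le s-2$; $X_k\sim M_{s-1}$ because $u_{s-1}\in M_{s-1}$ is adjacent to all of $V(C)$; and $X_k\sim Y_\ell$ by the minor in $C$. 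This contradicts the $K_{s,t}$-minor-freeness of $G$, so every component of $G_1$ is $K_t$, as claimed.

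The remaining and, I expect, hardest point is that fact. I would prove it by a routing argument: fix $v\in V(C)$ with $N(v)=\{w_1,\dots,w_{t-1}\}$ and $z\in V(C)\setminus N[v]$; if $C-v$ contains $t-1$ paths from $z$ to $\{w_1,\dots,w_{t-1}\}$, pairwise disjoint except at $z$ and meeting $\{w_1,\dots,w_{t-1}\}$ only at their ends, then $\{v\}$, $\{z\}$, and these $t-1$ paths each with $z$ deleted form the desired minor. The difficulty is that a bad choice of $v,z$ could be separated from $N(v)$ by fewer than $t-1$ vertices; eliminating this is where $(t-1)$-regularity and $|V(C)|>t$ must be exploited, via a minimum-separator analysis — for a minimum separator $S$ with $|S|\le t-2$, a smallest component $D$ of $C-S$ has all degrees $t-1$ with neighbourhoods inside $D\cup S$, and each vertex of $S$ has a neighbour in $D$, which constrains $C[V(D)\cup S]$ enough to locate the minor there or to repeat the argument with $v,z$ inside $D$. (Sanity check: when $|V(C)|=t+1$, $C$ is the cocktail-party graph $K_{((t+1)/2)\times 2}$ and a $K_{2,t-1}$-minor is immediate.) Everything else follows routinely from the claims already in hand; the separator bookkeeping in this last step is what I would need to be most careful with.
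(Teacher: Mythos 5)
Your first step---extracting $d_{G[R]}(v)\in\{0,t-1\}$ from $r_2=(t-1)r_1$ and $\Delta(G[R])\le t-1$---is exactly the paper's, and the gluing construction you propose afterward is a genuinely different and rather clean route. The paper attacks a nontrivial component $K$ head-on, showing by a common-neighborhood analysis (bounding $|N_K(u)\cap N_K(v)|$ above and below, then iterating inside $N_K(u)\cap N_K(v)$) that $K$ must contain a $K_{1,t}$-minor, which together with the $s-1$ universal vertices yields a forbidden $K_{s,t}$-minor. You instead reduce to a self-contained structural statement about regular graphs---that any connected $(t-1)$-regular graph on more than $t$ vertices has a $K_{2,t-1}$-minor---and then combine that minor with $s-1$ branch sets of the form $\{u_i,p_i\}$ built from the universal vertices and the isolated vertices of $G[R]$. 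That combination is correct: the branch sets are pairwise disjoint, and the adjacencies you check do give a $K_{s,t}$-minor. One small flaw is that you invoke Theorem~\ref{thm:main} to guarantee at least $s-1$ isolated vertices in $R$, which is circular since this claim is part of the proof of Theorem~\ref{thm:main}; but this is easily repaired, since Claim~\ref{cl:a1a2} already gives $r_1/r_0\le\frac23(t-1)+o(1)$, hence $|R^\bullet|=r_1/(t-1)\le\frac23 r_0+o(r_0)$ and so $j=r_0-|R^\bullet|=\Omega(n)\ge s-1$.

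The genuine gap is the ``key fact'' itself. You state it, sanity-check it for $|V(C)|=t+1$, and sketch a proof via the fan version of Menger's theorem plus a minimum-separator analysis, but you do not carry that analysis out, and you explicitly flag it as the part you would ``need to be most careful with.'' As it stands this is the entire content of the argument: everything else you wrote is routine once this lemma is granted. The statement is plausible (for $d$-connected $C$ one immediately gets $d$ internally disjoint paths between a non-adjacent pair, hence a $K_{2,d}$-minor, and the cocktail-party and small cubic cases check out), but $(t-1)$-regular connected graphs can have connectivity as low as $1$, and the separator/induction bookkeeping needed to handle those cases is not trivial and is not done. You should either supply a complete proof of the lemma or replace it with the paper's direct common-neighborhood argument inside a single component, which avoids needing any general extremal statement about regular graphs.
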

\begin{proof}
     Recall that $r_1= \one' A_R\one=\sum_{i\in R} d_{G[R]}(i)=2|E(G[R])|$, and $r_2= \one' A_R^2 \one=\sum_{i\in R} d_{G[R]}(i)^2$. By Claim \ref{cl:a1a2}, we have that
    $$\sum_{i\in R} d_{G[R]}(i)^2 = (t-1)\sum_{i\in R} d_{G[R]}(i).$$

Since $d_{G[R]}(v) \leq t-1$ for every $v\in R$, it follows that $G[R]$ is the disjoint union of $(t-1)$-regular graphs and isolated vertices. Let $K$ be an arbitrary non-trivial component of $G[R]$. We will show that $K$ is a clique on $t$ vertices.

For any $u, v\in V(K)$ with $uv \notin E(K)$, we claim that $|N_K(u)\cap N_K(v)| \geq t-2$. Otherwise, $|N_K(u)\backslash N_K(v)| \geq 2$ and $|N_K(v)\backslash N_K(u)|\geq 2$. Pick an arbitrary vertex $w \in N_K(u) \cap N_K(v)$ and contract $uw$ and $wv$, we then obtain a $K_{1,t}$-minor in $K$, and thus a $K_{s,t}$-minor in $G$. Similarly, for any $u,v\in V(K)$ with $uv\in E(K)$, we have $|N_K(u)\cap N_K(v)| \geq t-3$.

We claim now that for any $u,v \in V(K)$, $|N_K(u) \cap N_K(v)| \leq t-2$. Suppose otherwise that there exist vertices $u,v \in V(K)$ such that $|N_K(u) \cap N_K(v)| =t-1$. Let $w$ be an arbitrary vertex in $L$. Note that when $n$ is sufficiently large, we could find a length-two path from $w$ to each vertex in $L\backslash \{w\}$ using distinct vertices in $R\backslash V(K)$ as the internal vertices of these paths. It follows that 
$(L\backslash \{w\}) \cup \{u,v\}$ and $(N_K(u)\cap N_K(v))\cup \{w\}$ would form a $K_{s,t}$-minor in $G$.

Hence, we have that for any $u,v\in V(K)$ with $uv\notin E(K)$, $|N_K(u)\cap N_K(v)| = t-2$.
It then follows that there exist $u', v' \in V(K)$ such that $u'\in N_K(u)\backslash N_K(v)$ and $v' \in N_K(v) \backslash N_K(u)$.
We claim that $u'v' \notin E(K)$. Indeed, if $u'v'\in E(K)$, we could contract $v'u'$ into $w'$ and obtain a $K_{s,t}$ minor the same way as above.

Now note that since $u'v \notin E(K)$, we have $|N_K(u')\cap N_K(v)| = t-2$. It follows that $N_K(u')\cap N_K(v) = N_K(u)\cap N_K(v)$. Similarly, $N_K(v') \cap N_K(u) = N_K(u)\cap N_K(v)$. We will now analyze $N_K(u)\cap N_K(v)$. 

Let $G_1 = G[N_K(u) \cap N_K(v)]$. Note that for each vertex $w \in V(G_1)$, $w$ must have at most two non-neighbors in $G_1$, otherwise $|N_K(u)\cap N_K(w)| \leq t-4$, giving a contradiction. Moreover, each vertex $w \in V(G_1)$ has at least two non-neighbors in $G_1$, otherwise $d_K(w) \geq t-4+4 = t$, giving a contradiction. It follows that each vertex in $G_1$ has exactly two non-neighbors in $G_1$.

Now, if $G_1$ is a clique, we could easily find a $K_{1,t}$ in $K$ (by identifying one of the vertices in $N(u)\cap N(v)$ as the center). Hence together with $L$, we have a $K_{s,t}$-minor in $G$. Otherwise, we find $a, b\in G_1$ such that $ab \notin E(K)$. Since $|N_K(a)\cap N_K(b)|=t-2$ and each of $a$ and $b$ has exactly one non-neighbor in $G_1$, 
we then obtain that $|N_{G_1}(a)\cap N_{G_1}(b)| = t-6$, and there exist $a', b' \in V(G_1)$ such that $a' \in N_{G_1}(a)\backslash N_{G_1}(b)$ and $b'\in N_{G_1}(b)\backslash N_{G_1}(a)$. Similar to before, we have $a'b'\notin E(K)$, and $a',b'$ is each adjacent to $N_{G_1}(a)\cap N_{G_1}(b)$. Repeat this process, eventually, this process has to terminate, and we will have a $K_{1,t}$-minor in $K$, thus a $K_{s,t}$ minor in $G$.   
\end{proof}

This completes the proof of Theorem \ref{thm:main}.
\end{proof}

We now determine the maximum spread $K_{s, t}$-minor-free graphs for all admissible pairs $(s, t)$. 

\begin{proof}[Proof of Theorem \ref{thm:admissible} ]
Since $(s,t)$ is admissible, we have $G[L]=(s-1)K_1$.
We only need to consider the graph  $G_\ell = (s-1)K_1\vee \left(\ell K_t \cup (n-s+1-\ell t)P_1 \right)$.
We have $l_0=(s-1)$ and $l_i=0$ for $i\geq 1$.
We have $r_0=(n-s+1)$, and $r_i=\ell t(t-1)^i$ 
for each $i\geq 1$. 

Now we apply Lemma~\ref{l:lambda} to simplify the equation satisfied by 
both $\lambda_1$ and $\lambda_n$.
Equation \eqref{eq:lambda} can be simplified as 
\begin{align*}
    \lambda^2 &= \left(\sum_{k=0}^\infty \lambda^{-k} \one' A_L^k \one\right) \cdot
   \left(\sum_{k=0}^\infty \lambda^{-k} \one' A_R^k \one\right)\\
   &=(s-1)\left(n-s+1 +\sum_{k=1}^\infty \lambda^{-k} \ell t(t-1)^{k}\right)\\
   &=(s-1)\left(n-s+1 + \frac{\ell t \frac{t-1}{\lambda}}{1-\frac{t-1}{\lambda}}\right)\\
   &=\frac{(s-1)\left((n-s+1)\lambda-(n-s+1-\ell t) (t-1)\right)}{\lambda-(t-1)}.
\end{align*}
Simplifying it, we get  the following cubic equation:
\begin{equation}
\lambda^3 -(t-1)\lambda^2 -(s-1)(n-s+1)\lambda+ (s-1)(t-1)(n-s+1-\ell t )=0.
\end{equation}
Now let $\lambda=x+\frac{t-1}{3}$. We get the following reduced cubic equation
$\phi(x) = 0$.
\begin{equation}
x^3 - \left((s-1)(n-s+1) + \frac{1}{3}(t-1)^2\right)x+ 
(s-1)(t-1)\left( \frac{2}{3}(n-s+1)-\ell t \right) -\frac{2}{27}(t-1)^3=0.
\end{equation}
Let $p=(s-1)(n-s+1) + \frac{1}{3}(t-1)^2$ and $q=(s-1)(t-1)\left( \frac{2}{3}(n-s+1)-\ell t \right) -\frac{2}{27}(t-1)^3$. Since $\phi(x)$ has at least two real roots, 
we know from number theory that $p^3\geq \frac{27}{4}q^2$. 

We now need a lemma on the spread of a cubic polynomial. If $f$ is a cubic polynomial with three real roots, then the \textit{spread} $S(f)$ is defined to be the difference between the largest and smallest roots of $f$. 

\begin{lemma}\label{l:cubic}
Assume $p^3>\frac{27}{4}q^2$. Let $S(q)$ (with $p$ fixed) be the spread of the cubic equation
\begin{equation}\label{eq:cubic}
x^3-px+q=0.    
\end{equation}
If $2\left(\frac{p}{3}\right)^{3/2}> |q_1|>|q_2|$, then
$$S(q_1)<S(q_2).$$
\end{lemma}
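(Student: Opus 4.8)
The plan is to parametrize the three real roots of $x^3-px+q=0$ explicitly via the trigonometric (Viète) formula, and then show that the spread, viewed as a function of $q$ with $p$ fixed, is strictly decreasing in $|q|$ on the relevant range. Concretely, when $p>0$ and $|q|\le 2(p/3)^{3/2}$, every root can be written as $x_k = 2\sqrt{p/3}\,\cos\theta_k$ where the $\theta_k$ differ by $2\pi/3$ and the "base angle'' $\theta$ is determined by $\cos(3\theta) = \frac{3q}{p}\sqrt{\frac{3}{p}}\cdot\frac{-1}{2} = -\frac{3\sqrt{3}\,q}{2p^{3/2}}$; I will set $\gamma := 3\theta \in [0,\pi]$, so that $\cos\gamma = -\frac{3\sqrt3\,q}{2p^{3/2}}$ and hence $\gamma$ is a monotone function of $q$.

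The key computation is then to express $S(q) = x_{\max}-x_{\min}$ purely in terms of $\gamma$. Writing the three roots as $2\sqrt{p/3}\cos(\gamma/3)$, $2\sqrt{p/3}\cos(\gamma/3 - 2\pi/3)$, $2\sqrt{p/3}\cos(\gamma/3 + 2\pi/3)$ and using a product-to-sum identity, the difference between the largest and smallest is $2\sqrt{p/3}\cdot 2\sin\!\big(\frac{\gamma/3 + \pi/3}{1}\big)\cdot(\dots)$ — more cleanly, $S = 2\sqrt{p}\cdot\frac{1}{\sqrt3}\cdot\big(\text{max minus min of }\cos\text{ over three equally spaced angles}\big)$, and one checks that for $\gamma\in[0,\pi]$ this max-minus-min equals $2\cos\!\big(\frac{\gamma}{3}-\frac{\pi}{6}\big)$ (up to relabeling of which root is largest). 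Thus $S(q) = \frac{2\sqrt{p}}{\sqrt3}\cdot 2\cos\!\big(\frac{\gamma}{3}-\frac{\pi}{6}\big) = \frac{4\sqrt p}{\sqrt 3}\cos\!\big(\frac{\gamma}{3}-\frac{\pi}{6}\big)$. Since $\gamma$ ranges over $[0,\pi]$, the argument $\frac{\gamma}{3}-\frac{\pi}{6}$ ranges over $[-\frac{\pi}{6},\frac{\pi}{6}]$, on which $\cos$ is maximized at $0$ and strictly decreasing in $|\gamma - \frac{\pi}{2}|$. Now the strict inequality $2(p/3)^{3/2} > |q|$ means $|\cos\gamma| < 1$, i.e. $\gamma \in (0,\pi)$, and $|q_1|>|q_2|$ translates to $|\cos\gamma_1| > |\cos\gamma_2|$, i.e. $\gamma_1$ is strictly farther from $\pi/2$ than $\gamma_2$; therefore $\cos\!\big(\frac{\gamma_1}{3}-\frac{\pi}{6}\big) < \cos\!\big(\frac{\gamma_2}{3}-\frac{\pi}{6}\big)$, giving $S(q_1) < S(q_2)$ as claimed.

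I would organize the write-up as: (1) verify $p>0$ and that the discriminant condition $p^3 > \frac{27}{4}q^2$ together with $|q| < 2(p/3)^{3/2}$ (which is the same inequality) guarantees three distinct real roots lying in the range where the cosine parametrization is valid; (2) record the trigonometric root formula and the identity reducing $S$ to $\frac{4\sqrt p}{\sqrt 3}\cos(\frac{\gamma}{3}-\frac{\pi}{6})$ with $\cos\gamma = -\frac{3\sqrt3 q}{2p^{3/2}}$, $\gamma\in(0,\pi)$; (3) observe $S$ depends on $q$ only through $|\cos\gamma|$ and is strictly decreasing in $|\cos\gamma|$ on $(0,1)$, hence strictly decreasing in $|q|$; (4) conclude.

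The main obstacle is step (2): getting the "max minus min of three equally-spaced cosines'' into the clean closed form $2\cos(\frac{\gamma}{3}-\frac{\pi}{6})$ requires carefully tracking, as $\gamma$ moves across $(0,\pi)$, which of the three roots is the largest and which is the smallest (the identity of the extreme roots changes), and checking that the resulting expression is nonetheless a single smooth, unimodal function of $\gamma$. An alternative that sidesteps the case analysis is to avoid the closed form entirely: note $S(q)^2 = (x_1-x_2)^2+\dots$ is symmetric in the roots, so $S(q)^2$ is a polynomial in $p$ and $q^2$ (in fact one can compute $(x_1-x_2)^2(x_2-x_3)^2(x_3-x_1)^2 = 4p^3 - 27q^2$ is the discriminant, and combine with $\sum(x_i-x_j)^2 = 6p$ and $\sum x_i x_j = -p$ type relations) and then differentiate $S^2$ with respect to $q^2$; but pinning down the sign of $\frac{d(S^2)}{d(q^2)}$ still ultimately reduces to the same trigonometric monotonicity, so I would present the clean Viète-substitution argument and handle the relabeling explicitly.
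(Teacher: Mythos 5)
Your trigonometric (Vi\`ete) parametrization is exactly the paper's approach: the paper writes the roots as $2(p/3)^{1/2}\cos(\alpha+2\pi k/3)$ with $\cos 3\alpha = -\tfrac{q/2}{(p/3)^{3/2}}$, derives $S = 2\sqrt{p}\,\sin(\alpha+\pi/3)$, and concludes by monotonicity of $\sin$ on the relevant interval (the paper first reduces to $q\ge 0$ via the symmetry $x\mapsto -x$, which is precisely the relabeling-avoidance you were worried about, but your direct check over all of $\gamma\in(0,\pi)$ also works since the max and min roots in fact never swap there). One small slip: the clean closed form is $S = 2\sqrt{p}\,\cos(\gamma/3-\pi/6)$, not $\tfrac{4\sqrt{p}}{\sqrt{3}}\cos(\gamma/3-\pi/6)$ --- the product-to-sum step gives $\cos\alpha - \cos(\alpha+2\pi/3) = \sqrt{3}\,\sin(\alpha+\pi/3)$, not $2\sin(\alpha+\pi/3)$ --- but this constant does not affect your monotonicity conclusion.
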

Before we give the proof of Lemma~\ref{l:cubic}, we complete the proof of Theorem~\ref{thm:admissible} using Lemma~\ref{l:cubic}. 

Applying Lemma~\ref{l:cubic}, we conclude
that $S(G_\ell)=S(\phi)$ reaches the maximum if and only if $|q|$ reaches the minimum. Let $\ell_1$ be the real root of the equation
$q=0$. We have
$$\ell_1=\frac{2}{3t}\left(n-s+1 -\frac{(t-1)^2}{9(s-1)}\right).$$
Since $q$ is a linear function on $\ell$, the function $|q|$ reaches the minimum at
the nearest integer of $\ell_1$. This completes the proof of Theorem~\ref{thm:admissible}. 
\end{proof}
We now give the proof of Lemma~\ref{l:cubic}.
\begin{proof}[Proof of Lemma~\ref{l:cubic}]
Since $p^3>\frac{27}{4}q^2$,  the equation $x^3-px+q=0$ has three distinct real roots, say $x_1>x_2>x_3$. 
Observe that $-x_1,-x_2, -x_3$ are the roots of $x^3-px-q=0$. Thus, these two cubic polynomials have the same spread.
Without loss of generality, we can assume $q\geq 0$.
Let $\alpha=\frac{1}{3}\arccos(-\frac{q/2}{(p/3)^{3/2}})\in [\frac{\pi}{6}, \frac{\pi}{3})$. 
We have
$$\cos(3\alpha)= -\frac{q/2}{(p/3)^{3/2}}.$$
Applying the triple angle cosine formula, we have
$$4\cos^3(\alpha) - 3\cos(\alpha) = -\frac{q/2}{(p/3)^{3/2}}.$$
Plugging $\cos (\alpha)=\frac{x}{2(p/3)^{1/2}}$ and simplifying it, we get
$$x^3-px+q=0.$$
Thus $x_1=2(p/3)^{1/2} \cos (\alpha)$ is a root of Equation \eqref{eq:cubic}. Similarly,
$x_2=2(p/3)^{1/2} \cos\left(\alpha-\frac{2\pi}{3}\right)$, and $ x_3= 2(p/3)^{1/2} \cos\left(\alpha+\frac{2\pi}{3}\right)$ are also the roots of
Equation \eqref{eq:cubic}.
Since $\alpha\in [\frac{\pi}{6}, \frac{\pi}{3})$, we have
\begin{align*}
\frac{5\pi}{6}\leq \alpha+\frac{2\pi}{3} <\pi. \\
-\frac{\pi}{2}\leq \alpha-\frac{2\pi}{3} <-\frac{\pi}{3}. 
\end{align*}
Therefore
$$x_1>x_2>0>x_3.$$
In particular, we have
\begin{align*}
 S(q)&=x_1-x_3\\
 &= 2(p/3)^{1/2}\left( \cos(\alpha)-\cos (\alpha+\frac{2\pi}{3})
 \right) \\
 &= 2(p/3)^{1/2}\cdot 2 \sin\left(\frac{\pi}{3}\right)\sin\left(\alpha+\frac{\pi}{3}\right)\\
 &=2\sqrt{p} \sin\left(\alpha+\frac{\pi}{3}\right).
\end{align*}
Since $\alpha$ is an increasing function on $q$ and $S(q)$ is a decreasing function on $\alpha$, we conclude $S(q)$ is a decreasing function on $q$.
\end{proof}

We now determine all admissible pairs $(s, t)$. 

\begin{proof}[Proof of Theorem~\ref{thm:admpairs}]

We will first show the `only if' direction of Theorem~\ref{thm:admpairs}. Recall that by definition, the pair $(s,t)$ is admissible if $\psi(L) \leq 0$ for all graphs $L$ on $s-1$ vertices, and $\psi(L)=0$ only if $L=(s-1)K_1$. For $L=K_{1,s-2}$, we have that 

  \begin{align*}  \psi(K_{1, s-2}) &= 3\left((s-2)^2 + s-2\right) -\frac{2}{s-1}(2(s-2))^2 - (t-1)2(s-2)\\
  &=3(s-2)(s-1) - \frac{8}{s-1}(s-2)^2 - 2(t-1)(s-2),
  \end{align*}
from which it easily follows that $\psi(K_{1, s-2}) > 0$ if and only if \[t-1 < \frac32(s-1) - \frac{4(s-2)}{s-1}  \implies t < \frac32(s-3) + \frac{4}{s-1}.\]
Thus we can conclude that if $(s,t)$ is admissible, then $t \geq \frac{3}{2}(s-3) + \frac{4}{s-1}$.

Before we show the `if' direction of Theorem~\ref{thm:admpairs},
we need two upper bounds on the sum of the squared degrees of a graph due to de Caen~\cite{dC1998} and Das~\cite{Das2004}, respectively. 

\begin{theorem}[de Caen~\cite{dC1998}]\label{thm:decaen}
Let $G$ be a graph with $n$ vertices, $e$ edges and degrees $d_1 \ge d_2 \ge \cdots \ge d_n$. Then, 
\[\sum_{i=1}^nd_i^2 \le e\left(\frac{2e}{n-1} + n-2\right).\]
\end{theorem}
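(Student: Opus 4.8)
The plan is to prove the bound by determining which graph, among all graphs on $n$ vertices with $e$ edges, maximizes $\sum_i d_i^2$. I begin with the standard identity
$$\sum_{i=1}^n d_i^2 \;=\; \sum_{\{u,v\}\in E(G)}(d_u+d_v),$$
valid because each vertex $v$ lies in exactly $d_v$ edges and so contributes $d_v$ to the right-hand side $d_v$ times. Since the edges meeting a fixed edge $\{u,v\}$ number $d_u+d_v-1\le e$, every summand satisfies $d_u+d_v\le e+1$, whence $\sum_i d_i^2\le e(e+1)$; and when $e\le n-1$ one checks $e+1\le \tfrac{2e}{n-1}+n-2$, so the theorem already holds in that range (the cases $n\le 2$ being trivial). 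It therefore remains to handle $e>n-1$.

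For that range I would run a compression (shifting) argument. Let $G$ be a graph on $n$ vertices with $e$ edges maximizing $\sum_i d_i^2$. If there exist vertices $u,w$ with $d_u\ge d_w$ and a vertex $x\notin\{u,w\}$ with $xw\in E(G)$ but $xu\notin E(G)$, then replacing the edge $xw$ by $xu$ preserves $n$ and $e$ while changing $\sum_i d_i^2$ by $(d_u+1)^2+(d_w-1)^2-d_u^2-d_w^2=2(d_u-d_w)+2>0$, contradicting extremality. Hence in the extremal graph the neighborhoods are nested (for $d_u\ge d_w$ one has $N(w)\setminus\{u\}\subseteq N(u)$), i.e.\ $G$ is a threshold graph. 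Among threshold graphs with $e>n-1$ edges, a short analysis of the possible degree sequences shows $\sum_i d_i^2$ is maximized by a ``quasi-clique'': a clique on the largest $k$ with $\binom k2\le e$, one additional vertex joined to $e-\binom k2$ of the clique vertices, and $n-k-1$ isolated vertices. Substituting this degree sequence and using $\binom k2\le e<\binom{k+1}2$ yields $\sum_i d_i^2\le e\!\left(\tfrac{2e}{n-1}+n-2\right)$ after routine algebra, and following when each inequality is tight pins down the equality graphs.

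The main obstacle is identifying and verifying that extremal threshold graph. It is genuinely necessary to use graph structure here, not merely the degree sequence: the bound is \emph{not} an edge-by-edge estimate once $e>n-1$ (in $\overline{K_{n-2}}\vee K_2$ the edge between the two dominating vertices has $d_u+d_v=2n-2\gg\tfrac{2e}{n-1}+n-2$), and the easy inclusion–exclusion bound $\sum_i d_i^2\le ne+3t(G)$ coming from $d_u+d_v\le n+|N(u)\cap N(v)|$ (with $t(G)$ the number of triangles) is tight only for $K_n$ and far too weak for sparse graphs. So the real content is the extremal-structure step and the bookkeeping over threshold degree sequences; once that is in place, the remaining computation is mechanical.
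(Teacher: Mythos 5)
The paper does not prove this statement at all; it is quoted directly from de Caen's paper \cite{dC1998}, so there is no internal proof to compare against, and your proposal has to stand on its own. Its first half is fine: the identity $\sum_i d_i^2=\sum_{uv\in E}(d_u+d_v)$, the edgewise bound $d_u+d_v\le e+1$, and the check that $e+1\le \frac{2e}{n-1}+n-2$ when $e\le n-1$ are all correct, and the shifting step correctly forces an extremal graph to be a threshold graph.

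The genuine gap is the assertion that, for $e>n-1$, a ``short analysis'' shows the maximizer of $\sum_i d_i^2$ among threshold graphs is the quasi-clique. That is false. By the theorem of Ahlswede and Katona (1978), the maximum of $\sum_i d_i^2$ over graphs with $n$ vertices and $e$ edges is attained by either the quasi-complete graph \emph{or} the quasi-star (a set of dominating vertices plus one partially dominating vertex), and which of the two wins switches with $e$ in a rather delicate way; both are threshold graphs, so your shifting step cannot rule the quasi-star out. A concrete counterexample inside your range: $n=7$, $e=7>n-1$. The quasi-clique ($K_4$ plus a pendant vertex attached to it and two isolated vertices) has degree sequence $(4,3,3,3,1,0,0)$ with $\sum d_i^2=44$, while the quasi-star (one dominating vertex plus one extra edge) has degree sequence $(6,2,2,1,1,1,1)$ with $\sum d_i^2=48>44$; de Caen's bound here is $7\left(\frac{14}{6}+5\right)\approx 51.3$, so the theorem survives but your claimed extremal structure does not. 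Consequently the ``routine algebra'' step verifies the inequality only for one of the two candidate families, and the proof is incomplete: you would also have to verify the bound for quasi-stars (where it is tight for the star $K_{1,n-1}$, which is exactly why this family cannot be ignored), and even the reduction to the two candidates is a nontrivial theorem rather than a short degree-sequence check. A cleaner repair is to abandon the extremal-structure route and argue directly, as de Caen does, via a short Cauchy--Schwarz-type estimate applied vertex by vertex, which needs no classification of extremal graphs.
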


\begin{theorem}[Das~\cite{Das2004}]\label{thm:das}
Let $G$ be a graph with $n$ vertices and $e$ edges. Let $d_1$ and $d_n$ be, respectively, the highest and lowest degrees of $G$. Then, 
\[\sum_{i=1}^nd_i^2 \le 2e(d_1+d_n) - nd_1d_n.\]
 \end{theorem}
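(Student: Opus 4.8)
The plan is to derive Das's bound from a single elementary observation — that every vertex degree lies between the minimum degree $d_n$ and the maximum degree $d_1$ — combined with the handshake identity $\sum_{i=1}^n d_i = 2e$. No spectral or structural input is needed; the whole argument is one inequality summed over the vertices.

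First I would fix an index $i \in \{1, \dots, n\}$. Since $d_1 \ge d_i \ge d_n$ by definition of the largest and smallest degrees, both factors of the product $(d_1 - d_i)(d_i - d_n)$ are nonnegative, so
\[(d_1 - d_i)(d_i - d_n) \ge 0.\]
Expanding the left-hand side gives $d_1 d_i - d_1 d_n - d_i^2 + d_i d_n \ge 0$, which rearranges to the per-vertex estimate
\[d_i^2 \le (d_1 + d_n)\, d_i - d_1 d_n.\]

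Next I would sum this over all $i$ from $1$ to $n$. Applying the handshake lemma $\sum_{i=1}^n d_i = 2e$ to the linear term on the right yields
\[\sum_{i=1}^n d_i^2 \le (d_1 + d_n)\sum_{i=1}^n d_i - n d_1 d_n = 2e(d_1 + d_n) - n d_1 d_n,\]
which is exactly the claimed inequality.

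There is no genuine obstacle here; the only point requiring any care is the justification that $d_1 \ge d_i \ge d_n$ for \emph{every} index $i$, which is immediate from the definition of $d_1$ and $d_n$ as the maximum and minimum of the degree sequence, so that each term $(d_1 - d_i)(d_i - d_n)$ is indeed a product of two nonnegative reals. If desired, one could also record the equality case: equality holds if and only if $(d_1 - d_i)(d_i - d_n) = 0$ for all $i$, i.e. every vertex of $G$ has degree equal to $d_1$ or to $d_n$.
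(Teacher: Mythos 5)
Your argument is correct: the pointwise inequality $(d_1-d_i)(d_i-d_n)\ge 0$, expanded and summed with the handshake identity $\sum_i d_i = 2e$, gives exactly the stated bound. The paper does not prove this statement at all --- it is quoted from Das~\cite{Das2004} as a known result --- and your elementary one-inequality derivation is the standard proof of it, so there is nothing to reconcile with the paper's treatment.
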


 We first prove a lemma that almost covers the entire range of $t$ using only Theorem~\ref{thm:decaen}.

\begin{lemma}\label{lem:largetadm}
If $t\geq s$ and $t \geq \frac{3}{2}(s-3) + 1$, then the pair $(s, t)$ is admissible.
\end{lemma}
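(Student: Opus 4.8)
The plan is to show that for $t \geq \frac{3}{2}(s-3)+1$ (and $t \geq s$), the function $\psi(L)$ is nonpositive for every graph $L$ on $s-1$ vertices, with equality only for the empty graph. Write $n_0 = s-1$ for the number of vertices of $L$, let $e = |E(L)|$, and let $d_1, \ldots, d_{n_0}$ be its degree sequence, so that $\sum_i d_i = 2e$. Then
\[
\psi(L) = 3\sum_{i=1}^{n_0} d_i^2 - \frac{2}{n_0}(2e)^2 - (t-1)(2e) = 3\sum_{i=1}^{n_0} d_i^2 - \frac{8e^2}{n_0} - 2(t-1)e.
\]
The strategy is to bound $\sum_i d_i^2$ from above using de Caen's inequality (Theorem~\ref{thm:decaen}), which gives $\sum_i d_i^2 \le e\bigl(\frac{2e}{n_0-1} + n_0 - 2\bigr)$, and then check that the resulting upper bound on $\psi(L)$, now a function of the single variable $e$, is negative for all $e$ in the feasible range $1 \le e \le \binom{n_0}{2}$.

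Concretely, after substituting de Caen's bound I would obtain
\[
\psi(L) \le 3e\left(\frac{2e}{n_0-1} + n_0 - 2\right) - \frac{8e^2}{n_0} - 2(t-1)e
= e\left[\left(\frac{6}{n_0-1} - \frac{8}{n_0}\right)e + 3(n_0-2) - 2(t-1)\right].
\]
Since $e \ge 1 > 0$, it suffices to show the bracketed quantity is negative. The coefficient of $e$ inside the bracket is $\frac{6}{n_0-1} - \frac{8}{n_0} = \frac{6n_0 - 8(n_0-1)}{n_0(n_0-1)} = \frac{8 - 2n_0}{n_0(n_0-1)}$, which is negative precisely when $n_0 > 4$, i.e. $s > 5$; for those cases the bracket is largest at the smallest value $e = 1$, so it suffices to check $\frac{8-2n_0}{n_0(n_0-1)} + 3(n_0-2) - 2(t-1) < 0$, which, using $n_0 = s-1$, reduces to the hypothesis $t > \frac{3}{2}(s-3) + 1 + (\text{small correction term})$; one then verifies the correction term is absorbed and handles the boundary/small-$s$ cases ($s \le 5$, where the coefficient of $e$ is nonnegative and one instead evaluates at $e = \binom{n_0}{2}$) separately by direct computation. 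For the equality analysis, note $\psi((s-1)K_1) = 0$ trivially, and any nonempty $L$ has $e \ge 1$ with the bracket strictly negative, so $\psi(L) < 0$.

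I expect the main obstacle to be the bookkeeping at the edges of the parameter range: de Caen's inequality is tight for complete graphs and unions of a clique with isolated vertices, so when the coefficient of $e$ in the bracket is nonnegative (small $s$), the bound is maximized at $e = \binom{s-1}{2}$, i.e. $L = K_{s-1}$, and one must confirm $\psi(K_{s-1}) \le 0$ directly — here $\sum d_i^2 = (s-1)(s-2)^2$ and $2e = (s-1)(s-2)$, giving $\psi(K_{s-1}) = 3(s-1)(s-2)^2 - 2(s-1)(s-2)^2 - (t-1)(s-1)(s-2) = (s-1)(s-2)\bigl((s-2) - (t-1)\bigr) = (s-1)(s-2)(s-t-1) \le 0$ since $t \ge s$, with equality only if $s \le 2$ (when $K_{s-1}$ is itself empty). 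Since Lemma~\ref{lem:largetadm} covers all but finitely many non-admissible-looking pairs, the slightly sharper threshold $t \ge \frac{3}{2}(s-3) + \frac{4}{s-1}$ claimed in Theorem~\ref{thm:admpairs} must then be recovered by treating the finitely many remaining pairs (those with $\frac{3}{2}(s-3) + \frac{4}{s-1} \le t < \frac{3}{2}(s-3)+1$) by hand, presumably using the sharper Das bound (Theorem~\ref{thm:das}) in place of de Caen's, which is why both theorems were quoted.
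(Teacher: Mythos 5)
Your proposal is correct and follows essentially the same route as the paper: apply de Caen's inequality to bound $\sum_i d_i^2$, substitute into $\psi$, and show the resulting expression in $e=|E(L)|$ is negative for $e\ge 1$ by a sign analysis of the two coefficients. Your algebra is the paper's algebra in a thin disguise — the bracket coefficient $\frac{6}{n_0-1}-\frac{8}{n_0}$ is $4\left(\frac{3}{2(s-2)}-\frac{2}{s-1}\right)$ and your constant term $3(n_0-2)-2(t-1)$ is $2\left(\frac{3(s-3)}{2}-(t-1)\right)$, and your observation that the bracket is decreasing in $e$ for $s\ge 6$ and already negative at $e=1$ is equivalent to the paper's observation that both coefficients are nonpositive (strictly so for the quadratic term). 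The only genuine divergence is in the small-$s$ regime $s\in\{3,4,5\}$: the paper disposes of it by enumerating all graphs on at most four vertices, whereas you give a cleaner structural argument — since $eB(e)$ is an upward parabola through the origin, nonpositivity on $[0,\binom{s-1}{2}]$ follows from nonpositivity at the right endpoint, which is exactly $\psi(K_{s-1})=(s-1)(s-2)(s-t-1)\le 0$. That is a modest improvement on the paper's "check by hand," and your closing remarks correctly situate the lemma relative to Theorem~\ref{thm:admpairs} (the gap $\frac32(s-3)+\frac{4}{s-1}\le t<\frac32(s-3)+1$, which occurs only for even $s\ge 10$, is handled in the paper with the Das bound).
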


\begin{proof}[Proof of Lemma~\ref{lem:largetadm}]
We may assume $s\ge 3$.
Let $L$ be a graph on $s-1$ vertices with at least one edge. By Theorem~\ref{thm:decaen},
\[3\sum_{i\in V(L)}d_i^2 \le 3\frac{\sum_{i\in V(L)}d_i}{2}\left(\frac{\sum_{i\in V(L)}d_i}{s-2} + s-3\right) = \frac{3\left(\sum_{i\in V(L)}d_i\right)^2}{2(s-2)} + \frac{3(s-3)\sum_{i\in V(L)}d_i}{2}.\]

Therefore, 
\[\psi(L) \le \left(\frac{3}{2(s-2)} - \frac{2}{s-1}\right)\left(\sum_{i\in V(L)}d_i\right)^2 + \left(\frac{3(s-3)}{2} - (t-1)\right)\sum_{i\in V(L)}d_i.\]

It follows that $\psi(L) < 0$, as $\frac{3}{2(s-2)} - \frac{2}{s-1} < 0$ for $s \geq 6$,
and by assumption $\frac{3(s-3)}{2} - (t-1) \leq 0$. 
For $s \in \{3,4,5\}$, it could be easily checked by hand that $\psi(L) <0$ for all $L$ on $s-1$ vertices with at least one edge (by computing $\psi(L)$ for all two-vertex, three-vertex and four-vertex graphs $L$). 

This implies that the pair $(s, t)$ is admissible for all $t\geq s$ and $t \geq \frac{3}{2}(s-3) + 1$. 
\end{proof}

The only cases missed by Lemma~\ref{lem:largetadm} are the following: $s\ge 10$ is even and $t = \frac32s - 4$. To take care of these cases, we use both Theorem~\ref{thm:decaen} and Theorem~\ref{thm:das}. 

Assume $t = \frac32s - 4$, where $s\ge 10$ and $s$ is even. As in the proof of Lemma~\ref{lem:largetadm}, we can use Theorem~\ref{thm:decaen} to bound $\psi(L)$ by 

\begin{equation}\label{psiineq}
\psi(L) \le \left(\frac{3}{2(s-2)} - \frac{2}{s-1}\right)\left(\sum_{i\in V(L)}d_i\right)^2 + \frac12 \sum_{i\in V(L)}d_i,
\end{equation}
where $L$ is any graph on $s-1$ vertices with at least one edge. Viewing the right-hand side of \eqref{psiineq} as a quadratic polynomial in the variable $\sum_{i\in V(L)}d_i$, we see that the quadratic polynomial has two solutions: one with $\sum_{i\in V(L)}d_i = 0$, and one with \[\sum_{i\in V(L)}d_i = \frac{\frac12}{\frac{2}{s-1}-\frac{3}{2(s-2)}} = \frac{(s-1)(s-2)}{s-5}.\]

Since $\frac{3}{2(s-2)} - \frac{2}{s-1} < 0$, it follows that if \[\sum_{i\in V(L)}d_i > \frac{(s-1)(s-2)}{s-5},\]
then $\psi(L) < 0$. Thus, assume that $\sum_{i\in V(L)}d_i \leq \frac{(s-1)(s-2)}{s-5}$, \textit{i.e.}, that the number of edges $e$ in $L$ is bounded as $e < \frac12\frac{(s-1)(s-2)}{s-5}$. 

We distinguish two cases: (1) the graph $L$ has at least two isolated vertices; (2) the graph $L$ has no isolated vertices or one isolated vertex. We assume $s \geq 12$ as the case $s=10$ can be directly checked by computer. 

Suppose $L$ has at least two isolated vertices. Let $L_{-2}$ be the graph obtained by deleting two of the isolated vertices. Then, $\sum_{i\in V(L)}d_i^2 = \sum_{i\in V(L_{-2})}d_i^2$ and $\sum_{i\in V(L)}d_i = \sum_{i\in V(L_{-2})}d_i$. We use induction on $s$. So, we assume $s\ge 12$, whence by the inductive hypothesis,
\[\psi(L_{-2}) = 3\sum_{i\in V(L_{-2})}d_i^2 - \frac{2}{(s-2)-1}\left(\sum_{i\in V(L_{-2})}d_i\right)^2 - \left(\left(\frac32(s-2)-4\right)-1\right)\sum_{i\in V(L_{-2})}d_i < 0.\]

We show $\psi(L_{-2}) > \psi(L)$. We have
\begin{align*}
    \psi(L_{-2}) - \psi(L) &= \left(\frac{2}{s-1} - \frac{2}{s-3} \right)\left(\sum_{i\in V(L)}d_i\right)^2 + 3\sum_{i\in V(L)}d_i\\
    &=\frac{-4}{(s-1)(s-3)}\left(\sum_{i\in V(L)}d_i\right)^2 + 3\sum_{i\in V(L)}d_i.
\end{align*}
Viewing $\psi(L_{-2}) - \psi(L)$ as a quadratic polynomial in $\sum_{i\in V(L)}d_i$, it follows that $\psi(L_{-2}) - \psi(L) > 0$ if $\sum_{i\in V(L)}d_i < \frac34 (s-1)(s-3)$. Indeed we have that $\sum_{i\in V(L)}d_i \leq \frac{(s-1)(s-2)}{s-5} < \frac34(s-1)(s-3)$ if $s\ge 10$. Therefore, $\psi(L) < \psi(L_{-2}) < 0$. 

Now, assume that instead $L$ has at most one isolated vertex. Recall that by our assumption, 
\[\sum_{i\in V(L)}d_i \leq \frac{(s-1)(s-2)}{s-5} = s + 2 + \frac{12}{s-5}.\]
Without loss of generality, let $d_1\geq d_2\geq \cdots \geq d_{s-1}$ be the degree sequence of $L$. We could easily check by hand that $\psi(L)<0$ for all $L$ with degree sequence of the form $(d_1, 1, \cdots, 1,1)$, $(d_1, 1, \cdots, 1,0)$, $(d_1,2, 1,1, \cdots,1, 1)$, or $(d_1, 2, 1,1, \cdots, 1,0)$. 

Otherwise, we have that $d_1 \leq s+2 + \frac{12}{s-5} - (s-1) = 3 + \frac{12}{s-5}$ if $d_{s-1} = 0$ and similarly $d_1 \leq 2 + \frac{12}{s-5}$ if $d_{s-1} = 1$. In either case, $d_1 + d_{s-1} \leq 3 + \frac{12}{s-5}$ if $s\ge 12$.
Since $d_1 + d_{s-1}$ is an integer, we have that $d_1 + d_{s-1} \leq 4$ for $s\geq 12$.
Therefore, by Theorem~\ref{thm:das}, $\sum_{i\in V(L)}d_i^2 \leq 4\sum_{i\in V(L)}d_i$, so
\begin{align*}
    \psi(L) &= 3 \sum_{i\in V(L)}d_i^2 -\frac{2}{s-1} \left(\sum_{i\in V(L)}d_i\right)^2  - \left(\frac{3}{2}s-5\right)\sum_{i\in V(L)}d_i\\
            & \leq \left(12 - \left(\frac{3}{2}s-5\right)\right)\sum_{i\in V(L)}d_i -\frac{2}{s-1}\left(\sum_{i\in V(L)}d_i\right)^2.
\end{align*}
But now we see that $\psi(L) < 0$, since $-\frac{2}{s-1}\left(\sum_{i\in V(L)}d_i\right)^2 < 0$ and $17 - \frac32 s < 0$ if $s\ge 12$. This completes the proof of Theorem~\ref{thm:admpairs}.
 
\end{proof}

\section*{Acknowledgment}
We thank the anonymous referees for their careful reading of the paper and their helpful comments.

\end{document}